\def\section{\@startsection{section}{1}%
	\z@{.7\linespacing\@plus\linespacing}{.5\linespacing}%
	{\bfseries
		\centering
}}
\def\@secnumfont{\bfseries}
\newtheorem{theorem}{Theorem}[section]
\newtheorem{lemma}[theorem]{Lemma}
\newtheorem{definition}[theorem]{Definition}
\newtheorem{remark}[theorem]{Remark}
\newtheorem{hypothesis}[theorem]{Hypothesis}
\renewcommand{\d}{\/\mathrm{d}\/}
\def\w{\textbf{W}^{\varepsilon}_{{\theta}^{\varepsilon}}}
\def\L{\mathbb{L}}
\def\wi{\widetilde}
\def\X{\mathbb{X}}
\def\A{\mathrm{A}}
\def\G{\mathrm{G}}
\def\g{\mathbf{g}}
\def\C{\mathrm{C}}
\def\h{\mathbf{h}}
\def\B{\mathrm{B}}
\def\D{\mathrm{D}}
\def\y{\mathbf{y}}
\def\x{\mathbf{x}}
\def\z{\mathbf{z}}
\def\p{\mathbf{p}}
\def\v{\mathbf{v}}
\def\w{\mathbf{w}}
\def\W{\mathrm{W}}
\def\f{\mathbf{f}}
\def\V{\mathbb{V}}
\def\U{\mathrm{U}}
\def\u{\mathbf{u}}
\def\H{\mathbb{H}}
\def\n{\mathbf{n}}
\newcommand{\R}{\mathbb{R}}
\renewcommand{\d}{\/\mathrm{d}\/}
\let\originalleft\left
\let\originalright\right
\renewcommand{\left}{\mathopen{}\mathclose\bgroup\originalleft}
\renewcommand{\right}{\aftergroup\egroup\originalright}
\newcommand{\Addresses}{{
		\footnote{
			
			\noindent \textsuperscript{1}Department of Mathematics, Indian Institute of Technology Roorkee-IIT Roorkee,
			Haridwar Highway, Roorkee, Uttarakhand 247667, INDIA.\par\nopagebreak
			\noindent  \textit{e-mail:} \texttt{maniltmohan@ma.iitr.ac.in, maniltmohan@gmail.com.}
			
			\noindent \textsuperscript{*}Corresponding author.

			\textit{Key words:} porus medium, convective Brinkman-Forchheimer equations, first order necessary conditions, Pontryagin's maximum principle, optimal control.
			
			Mathematics Subject Classification (2020): 49J20, 35Q35, 76D03.

}}}
\begin{document}

	\title[Optimal Control of the 2D CBF equations]{Optimal control problems governed by two dimensional convective Brinkman-Forchheimer equations\Addresses}
	\author[M. T. Mohan]
	{Manil T. Mohan\textsuperscript{1*}}

	\maketitle
	
	\begin{abstract}
	The  convective Brinkman-Forchheimer (CBF) equations describe the motion of incompressible viscous fluid  through a rigid, homogeneous, isotropic, porous medium and is given by
	$$\partial_t\u-\mu \Delta\u+(\u\cdot\nabla)\u+\alpha\u+\beta|\u|^{r-1}\u+\nabla p=\f,\ \nabla\cdot\u=0.$$
	In this work, we consider some distributed optimal control problems like total energy minimization, minimization of enstrophy, etc governed by the two dimensional CBF equations with the absorption exponent $r=1,2$ and $3$. We show  the existence  of an optimal solution and  the first order necessary conditions of optimality for such   optimal control problems in terms of the Euler-Lagrange system. Furthermore, for the case $r=3$,  we show the second order necessary and sufficient conditions of optimality.  We also investigate an another control problem which is similar to that of the data assimilation problems in meteorology of obtaining unknown initial data, when the system under consideration is 2D CBF equations, using optimal control techniques.  
	\end{abstract}

	\section{Introduction}\label{sec1}\setcounter{equation}{0}  
The  convective Brinkman-Forchheimer (CBF) equations  describe the motion of  incompressible viscous fluid  through a rigid, homogeneous, isotropic, porous medium.	In this work, we consider the controlled CBF  equations and investigate some optimal control problems. Let $\mathcal{O}\subset\R^2$ be a bounded domain  with a smooth boundary $\partial\mathcal{O}$. Let $\u(t , x) \in \R^2$ denote the velocity field at time $t$ and position $x$,  $p(t,x)\in\R$ represent the pressure field, and $\f(\cdot,\cdot)\in\R^2$ stand for an external forcing. We consider the following controlled CBF equations (see \cite{AKS} for physical interpretation of the model): 
\begin{equation}
\begin{aligned}\label{1}
\frac{\partial \u(t,x)}{\partial t}&-\mu\Delta\u(t,x)+(\u(t,x)\cdot\nabla)\u(t,x)+\alpha\u(t,x)+\beta|\u(t,x)|^{r-1}\u(t,x)+\nabla p(t,x)\\&\quad=\f(t,x)+\D\U(t,x),\ \text{ for } x\in\mathcal{O},\ t>0,
\end{aligned} 
\end{equation}
with the conditions
\begin{equation}\label{1.2}
\left\{
\begin{array}{ll}
(\nabla\cdot\u)(t,x)=0, &  \text{ for }\ x\in\mathcal{O},\ t>0,\\
\u(t,x)=\mathbf{0}, &  \text{ for }\ x\in\partial\mathcal{O},\ t\geq 0,\\
\u(0,x)=\u_0(x),& \text{ for }\ x\in\mathcal{O},\\
\int_{\mathcal{O}}p(t,x)\d x=0,&\text{ for } \ t\geq 0.
\end{array}
\right.
\end{equation}
The system \eqref{1} can be considered as a modification (by an absorption term $\alpha\u+\beta|\u|^{r-1}\u$) of the classical controlled Navier-Stokes equations (NSE). That is,  for $\alpha=\beta=0$, we obtain the classical controlled 2D NSE.  The final condition in \eqref{1.2} is imposed for the uniqueness of the pressure $p$. The constant $\mu$ represents the positive Brinkman coefficient (effective viscosity), the positive constants $\alpha$ and $\beta$ represent the Darcy (permeability of porous medium) and Forchheimer (proportional to the porosity of the material) coefficient, respectively and the absorption exponent $r\in[1,\infty)$.  The \emph{distributed control} acting on the system is denoted by $\U(\cdot,\cdot)$ in \eqref{1}  and $\D$ is a bounded linear operator.  The model \eqref{1.2} is recognized to be more accurate when the flow velocity is too large for the Darcy's law to be valid alone, and in addition, the porosity is not too small. 

Let us first discuss about the global solvability results  for the uncontrolled  system \eqref{1} (with $\U=\mathbf{0}$)  available  in the literature. The existence of a global weak solution to the system \eqref{1} in two and three dimensional bounded domains is obtained in \cite{SNA}. The authors have also established the uniqueness of weak solutions in two dimensions for $r\in[1,\infty)$. The Brinkman-Forchheimer equations with fast growing nonlinearities is investigated in \cite{KT2} and the authors established the existence of regular dissipative solutions and global attractors for the system \eqref{1} in three dimensions for $r> 3$. This result assures the existence of global weak solutions to the 3D CBF equations  in the Leray-Hopf sense satisfying the energy equality. The authors in \cite{CLF} showed that all weak solutions of the critical CBF equations ($r=3$ with $4\beta\mu\geq 1$) posed on a bounded domain in $\mathbb{R}^3$ satisfy the energy equality. The existence and uniqueness of a global weak solution in the Leray-Hopf sense satisfying the energy equality to the  3D CBF equations with $r\ge 3$ ($2\beta\mu\geq 1$, for $r=3$)  is investigated in \cite{MTM7} by exploiting  the monotonicity as well as the demicontinuity properties of the linear and nonlinear operators, and the Minty-Browder technique.  

 Control of  fluid flow and turbulence inside a flow in a given physical domain, with known initial data and by means of body forces, boundary data, temperature are well-studied problems in fluid mechanics  (cf. \cite{FART,fursikov,gunzburger,sritharan} etc).  The optimal control problems governed by the Navier-Stokes equations have been developed extensively for the past few decades after the mathematical advancements in the optimal control of infinite dimensional nonlinear system theory and partial differential equations (see for example \cite{FART,VB3,SDMTS,fursikov,gunzburger,lions,raymond,sritharan} etc). Various optimal control problems governed by the Navier-Stokes equations have been considered in \cite{VB2,HOF,GWa,GWa1}, etc and the references therein.  In this work, we consider the optimal control problems like total energy minimization, minimization of enstrophy, etc, governed by the two dimensional CBF equations \eqref{1}. We show  the existence  of an optimal solution as well as  the first order necessary conditions of optimality (Pontryagin's maximum principle) for such   optimal control problems in terms of the Euler-Lagrange system. For the cubic growth ($r=3$), we show the second order necessary and sufficient conditions of optimality for such problems. Using optimal control techniques, we  investigate an another control problem which is similar to that of data assimilation problems in meteorology of obtaining unknown initial data, when the system under consideration is 2D CBF equations \eqref{1}.  Due to technical difficulties (see Remark \ref{rem4.4} below), in this work, we take the absorption exponent $r=1,2$ and $3$ only (physically relevant cases). The case of $r=1$ is not discussed in details as it is just a perturbation of the 2D NSE with $(\alpha+\beta)\u$ and not much technical difficulties arise. 
 
 The rest of the paper is organized as follows. In the next section, we provide the necessary function spaces needed to obtain the existence of a unique weak solution satisfying the energy equality for  the system \eqref{1}. In the same section, we also discuss about some important properties of the  linear and nonlinear operators like local monotonicity, hemicontinuity, etc, which is used to obtain the global solvability results (Theorems \ref{thm2.2}, \ref{thm2.3} \ref{main2}, etc). We consider the linearized system corresponding to \eqref{1} and establish the global solvability results in the same section (Theorem \ref{linear}). A distributed optimal control problem  of minimizing the total energy  as well as enstrophy subjected to the controlled 2D CBF equations \eqref{1} is formulated in section \ref{sec3}. We show  the existence  of an optimal solution for such problems in the same section (Theorem \ref{optimal}). In section \ref{sec4}, we prove the first order necessary conditions optimality for the optimal control problem via Pontryagin's maximum principle (Theorem \ref{main}). We characterize the optimal control in terms of the adjoint variable. Furthermore,  in the same section, we show the uniqueness of optimal control in small time interval for the optimal control problem (Theorem \ref{thm4.5}).  For the critical case ($r=3$), the second order necessary and sufficient conditions of optimality for the optimal control problem is established in section \ref{sec5} (Theorems \ref{necessary} and \ref{sufficient}).  In the final section, we formulate a problem similar to that of the data assimilation problems of meteorology of  optimizing the initial data,  where we find the unknown optimal initial data by minimizing a suitable cost functional  subject to the 2D CBF equations. We find the first order necessary conditions as well as second order necessary and sufficient conditions of optimality (for the case $r=3$) for such problems in section \ref{sec6} (Theorems \ref{data}, \ref{thm6.3}, \ref{thm6.4}).

\section{Mathematical Formulation}\label{sec2}\setcounter{equation}{0}
This section is devoted for the necessary function spaces  needed to obtain the global solvability results of the system \eqref{1}-\eqref{1.2}. We mainly follow the work \cite{MTM7} for the functional framework.  We provide an abstract formulation of the system \eqref{1}-\eqref{1.2} and  discuss about some properties like local monotonicity, hemicontinuity, etc of the  linear and nonlinear operators. The existence of a unique global weak solution to the linearized problem corresponding to the system \eqref{1}-\eqref{1.2} is also established.

\subsection{Function spaces}\label{sub2.1} We denote $\C_0^{\infty}(\mathcal{O};\R^n)$ as the space of all infinitely differentiable functions  ($\R^n$-valued) with compact support in $\mathcal{O}\subset\R^n$.  We define 
$\mathcal{V}:=\{\u\in\C_0^{\infty}(\mathcal{O},\R^n):\nabla\cdot\u=0\},$
$\mathbb{H}$, $\mathbb{V}$ and $\wi\L^p$ as the the closure of $ \mathcal{V} $ in the Lebesgue space  $\L^2(\mathcal{O})=\mathrm{L}^2(\mathcal{O};\R^n),$
 in the Sobolev space $\H_0^1(\mathcal{O})=\mathrm{H}_0^1(\mathcal{O};\R^n),$
 in the Lebesgue space $ \L^p(\mathcal{O})=\mathrm{L}^p(\mathcal{O};\R^n),$
for $p\in(2,\infty)$, respectively. Then under some smoothness assumptions on the boundary (one can take $\C^2$ boundary), we characterize the spaces $\H$, $\V$ and $\widetilde{\L}^p$ as 
\begin{align*}
\H&=\{\u\in\L^2(\mathcal{O}):\nabla\cdot\u=0,\u\cdot\mathbf{n}\big|_{\partial\mathcal{O}}=0\},\\
\V&=\{\u\in\H_0^1(\mathcal{O}):\nabla\cdot\u=0\},\\
\widetilde{\L}^p&=\{\u\in\L^p(\mathcal{O}):\nabla\cdot\u=0, {\u\cdot\mathbf{n}\big|_{\partial\mathcal{O}}=0}\},
\end{align*}
where $\mathbf{n}$ is the unit outward normal to $\partial\mathcal{O}$,  and $\u\cdot\n\big|_{\partial\mathcal{O}}$ should be understood in the sense of trace in $\H^{-1/2}(\partial\mathcal{O})$ (cf. Theorem 1.2, Chapter 1, \cite{Te}). The norms in $\H,\V,\wi\L^p$ are denoted by $\|\u\|_{\H}^2:=\int_{\mathcal{O}}|\u(x)|^2\d x,
$, $\|\u\|_{\V}:=\int_{\mathcal{O}}|\nabla\u(x)|^2\d x$ and $\|\u\|_{\widetilde{\L}^p}^p=\int_{\mathcal{O}}|\u(x)|^p\d x$, respectively. Let $(\cdot,\cdot)$ denote the inner product in the Hilbert space $\H$ and $\langle \cdot,\cdot\rangle $ denote the induced duality between the spaces $\V$  and its dual $\V'$ as well as $\widetilde{\L}^p$ and its dual $\widetilde{\L}^{p'}$, where $\frac{1}{p}+\frac{1}{p'}=1$. Note that $\H$ can be identified with its own dual  and we have the Gelfand triple $\V\subset\H\subset\V'$ and the embedding $\V\subset\H$ is compact.  Let $\mathbb{U}$ be a separable Hilbert space identified with its own dual and we denote $\|\cdot\|_{\mathbb{U}}$ and $(\cdot,\cdot)_{\mathbb{U}}$ as the norm and inner product defined on $\mathbb{U}$.  We also denote $\mathcal{L}(\mathbb{U},\V')$ as the space of all bounded linear operator from $\mathbb{U}$ to $\V'$.  
\subsection{Linear operator}\label{sub2.2}
 Let $\mathcal{P}: \L^p(\mathcal{O}) \to\wi\L^p$, $p\in(1,\infty)$, denote the Helmholtz-Hodge projection (cf  \cite{DFHM,HKTY}). For $p=2$, it becomes an orthogonal projection.  We  define
\begin{equation*}
\left\{
\begin{aligned}
\A\u:&=-\mathcal{P}\Delta\u,\;\u\in\D(\A),\\ \D(\A):&=\V\cap\H^{2}(\mathcal{O}).
\end{aligned}
\right.
\end{equation*}
It can be easily seen that the operator $\A$ is a non-negative self-adjoint operator in $\H$ with $\V=\D(\A^{1/2})$ and \begin{align}\label{2.7a}\langle \A\u,\u\rangle =\|\u\|_{\V}^2,\ \textrm{ for all }\ \u\in\V, \ \text{ so that }\ \|\A\u\|_{\V'}\leq \|\u\|_{\V}.\end{align}
For a bounded domain $\mathcal{O}$, the operator $\A$ is invertible and its inverse $\A^{-1}$ is bounded, self-adjoint and compact in $\H$. 
Using spectral theorem, one can deduce  that 
\begin{align}\label{poin}
\|\nabla\u\|_{\mathbb{H}}^2=\langle \A\u,\u\rangle =\sum_{k=1}^{\infty}\uplambda_k|(\u,e_k)|^2\geq \lambda_1\sum_{k=1}^{\infty}|(\u,e_k)|^2=\lambda_1\|\u\|_{\mathbb{H}}^2,
\end{align}
for all $\u\in\V$, which is the well known \emph{Poincar\'e inequality}. 

\subsection{Bilinear operator}

Let us now define the trilinear form $b(\cdot,\cdot,\cdot):\V\times\V\times\V\to\R$ by $$b(\u,\v,\w)=\int_{\mathcal{O}}(\u(x)\cdot\nabla)\v(x)\cdot\w(x)\d x=\sum\limits_{i,j=1}^2\int_{\mathcal{O}}u_i(x)\frac{\partial v_j(x)}{\partial x_i}w_j(x)\d x.$$ If $\u, \v$ are such that the linear map $b(\u, \v, \cdot) $ is continuous on $\V$, the corresponding element of $\V'$ is denoted by $\B(\u, \v)$. We also denote $\B(\u) = \B(\u, \u)=\mathcal{P}(\u\cdot\nabla)\u$.
An integration by parts yields 
\begin{equation}\label{b0}
\left\{
\begin{aligned}
b(\u,\v,\v) &= 0,\text{ for all }\u,\v \in\V,\\
b(\u,\v,\w) &=  -b(\u,\w,\v),\text{ for all }\u,\v,\w\in \V.
\end{aligned}
\right.\end{equation}
The Sobolev inequality (Lemma 1, 2, Chapter 1, \cite{OAL})  implies $\V\subset\wi\L^4\subset\H\subset\wi\L^{4/3}\subset\V'$. Using H\"older's inequality, we deduce the following inequality:
\begin{align*}
|b(\u,\v,\w)|=|b(\u,\w,\v)|\leq \|\u\|_{\wi\L^4}\|\nabla\w\|_{\H}\|\v\|_{\wi\L^4},\text{ for all }\u,\v,\w\in\V,
\end{align*}
and hence  applying Ladyzhenskaya's inequality, we get 
\begin{align}\label{2.9a}
\|\mathrm{B}(\u,\v)\|_{\V'}\leq \|\u\|_{\wi\L^4}\|\v\|_{\wi\L^4}\leq \sqrt{2}\|\u\|_{\H}^{1/2}\|\u\|_{\V}^{1/2}\|\v\|_{\H}^{1/2}\|\v\|_{\V}^{1/2}, \ \text{ for all} \ \u,\v\in\V. 
\end{align}
For $\u,\v\in\V$, we further have 
\begin{align}\label{2p8}
\langle\B(\u)-\B(\v),\u-\v\rangle&=\langle\B(\u-\v,\v),\u-\v\rangle \leq\|\v\|_{\V}\|\u-\v\|_{\wi\L^4}^2,
\end{align}
using H\"older's inequality.

\subsection{Nonlinear operator}\label{sub2.4}
Let us now consider the operator $\mathcal{C}(\u):=\mathcal{P}(|\u|^{r-1}\u)$. It is immediate that $\langle\mathcal{C}(\u),\u\rangle =\|\u\|_{\widetilde{\L}^{r+1}}^{r+1}$. Furthermore, for all $\u\in\wi\L^{r+1}$, the map is Gateaux differentiable with Gateaux derivative 
{
\begin{align}\label{29}
\mathcal{C}'(\u)\v&=\left\{\begin{array}{cc}\mathcal{P}(\v),&\text{ for }r=1,\\ \left\{\begin{array}{cc}\mathcal{P}(|\u|\v)+\mathcal{P}\left(\frac{\u}{|\u|}(\u\cdot\v)\right),&\text{ if }\u\neq \mathbf{0},\\\mathbf{0},&\text{ if }\u=\mathbf{0},\end{array}\right.&\text{ for } r=2,\\ \mathcal{P}(|\u|^{r-1}\v)+(r-1)\mathcal{P}(\u|\u|^{r-3}(\u\cdot\v)), &\text{ for }r\geq 3,\end{array}\right.
\end{align}}
for all $\v\in\widetilde{\L}^{r+1}$.  For $\u,\v\in\wi\L^{r+1}$, it can be easily seen that 
\begin{align}\label{2.9}
\langle\mathcal{C}'(\u)\v,\v\rangle=\int_{\mathcal{O}}|\u(x)|^{r-1}|\v(x)|^2\d x+(r-1)\int_{\mathcal{O}}|\u(x)|^{r-3}|\u(x)\cdot\v(x)|^2\d x\geq 0,
\end{align}
for $r\geq 1$ (note that for the case $r=2$, \eqref{2.9} holds, since in that case the second integral becomes $\int_{\{x\in\mathcal{O}:\u(x)\neq 0\}}\frac{1}{|\u(x)|}|\u(x)\cdot\v(x)|^2\d x\geq 0$). For $r\geq 3$, we further have 
{
\begin{align}\label{30}
\mathcal{C}''(\u)(\v\otimes\w)&=(r-1)\mathcal{P}\left\{|\u|^{r-3}\left[(\u\cdot\w)\v+(\u\cdot\v)\w+(\w\cdot\v)\u\right]\right\}\nonumber\\&\quad+(r-1)(r-3)\mathcal{P}\left[|\u|^{r-5}(\u\cdot\v)(\u\cdot\w)\u\right],
\end{align}}
for all $\u,\v,\w\in\widetilde{\L}^{r+1}$ (for $r=4$, the final term appearing in the right hand side of the equality \eqref{30} is defined for $\u\neq\mathbf{0}$ and one has to take $\mathbf{0}$ for $\u=\mathbf{0}$). For $0<\theta<1$ and $\u,\v\in\widetilde{\L}^{r+1}$, using Taylor's formula (Theorem 6.5, \cite{RFC}, \cite{PGC}), we find
\begin{align}
\langle \mathcal{P}(|\u|^{r-1}\u)-\mathcal{P}(|\v|^{r-1}\v),\w\rangle&\leq \|(|\u|^{r-1}\u)-(|\v|^{r-1}\v)\|_{\widetilde{\L}^{\frac{r+1}{r}}}\|\w\|_{\widetilde{\L}^{r+1}}\nonumber\\&\leq \sup_{0<\theta<1}r\|\theta\u+(1-\theta)\v\|_{\widetilde{\L}^{r+1}}^{r-1}\|\u-\v\|_{\widetilde{\L}^{r+1}}\|\w\|_{\widetilde{\L}^{r+1}}\nonumber\\&\leq r\left(\|\u\|_{\widetilde{\L}^{r+1}}+\|\v\|_{\widetilde{\L}^{r+1}}\right)^{r-1}\|\u-\v\|_{\widetilde{\L}^{r+1}}\|\w\|_{\widetilde{\L}^{r+1}},
\end{align}
for all $\u,\v,\w\in\widetilde{\L}^{r+1}$. 
For any $r\in[1,\infty)$, from \cite{MTM7}, we infer that 
\begin{align}\label{2.23}
&\langle\mathcal{P}(\u|\u|^{r-1})-\mathcal{P}(\v|\v|^{r-1}),\u-\v\rangle\geq \frac{1}{2}\||\u|^{\frac{r-1}{2}}(\u-\v)\|_{\H}^2+\frac{1}{2}\||\v|^{\frac{r-1}{2}}(\u-\v)\|_{\H}^2\geq 0.
\end{align}
	It is important to note that 
\begin{align}\label{a215}
\|\u-\v\|_{\wi\L^{r+1}}^{r+1}\leq 2^{r-2}\||\u|^{\frac{r-1}{2}}(\u-\v)\|_{\L^2}^2+2^{r-2}\||\v|^{\frac{r-1}{2}}(\u-\v)\|_{\L^2}^2,
\end{align}
for $r\geq 1$ (replace $2^{r-2}$ with $1,$ for $1\leq r\leq 2$). 
\begin{remark}\label{rem2.2}
{Even though we have defined the the operator $\mathcal{C}(\cdot)$ for $r\in[1,\infty)$, we consider the cases $r=1,2,3$ only in the rest of the paper. For $r=3$, from \eqref{30}, we deduce that
\begin{align*}
\mathcal{C}''(\u)(\v\otimes\w)&=2\mathcal{P}\left\{\left[(\u\cdot\w)\v+(\u\cdot\v)\w+(\w\cdot\v)\u\right]\right\},\\
\mathcal{C}'''(\u)(\v\otimes\w\otimes\x)&=2\mathcal{P}\left\{\left[(\x\cdot\w)\v+(\x\cdot\v)\w+(\w\cdot\v)\x\right]\right\},
\end{align*}
for all $\u,\v,\w,\x\in\wi\L^4$. }
\end{remark}
\subsection{Local monotonicity}
Let us now discuss about the local monotonicity property of the linear and nonlinear operators, which plays a crucial role in the global solvability of the system \eqref{1}. 
\begin{theorem}[Theorem 2.11, \cite{MTM1}, Remark 2.4, \cite{MTM7}]\label{thm2.2}
	Let $\u,\v\in\V$. Then,	for the operator $\G(\u)=\mu \A\u+\B(\u)+\alpha\u+\beta\mathcal{C}(\u)$, we  have 
	\begin{align}\label{fe2}
	\langle(\G(\u)-\G(\v),\u-\v\rangle+ \frac{27}{32\mu ^3}N^4\|\u-\v\|_{\H}^2\geq 0,
	\end{align}
	for all $\v\in{\mathbb{B}}_N$, where ${\mathbb{B}}_N$ is an $\widetilde{\L}^4$-ball of radius $N$, that is,
	$
	{\mathbb{B}}_N:=\big\{\z\in\widetilde{\L}^4:\|\z\|_{\widetilde{\L}^4}\leq N\big\}.
	$
\end{theorem}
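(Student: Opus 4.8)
The plan is to split $\langle\G(\u)-\G(\v),\u-\v\rangle$ into its four constituent pieces and to show that three of them are harmless, so that only the bilinear term must be controlled against the viscous part. Writing
$$\langle\G(\u)-\G(\v),\u-\v\rangle=\mu\langle\A(\u-\v),\u-\v\rangle+\langle\B(\u)-\B(\v),\u-\v\rangle+\alpha\|\u-\v\|_{\H}^2+\beta\langle\mathcal{C}(\u)-\mathcal{C}(\v),\u-\v\rangle,$$
I would first record that $\mu\langle\A(\u-\v),\u-\v\rangle=\mu\|\u-\v\|_{\V}^2$ by \eqref{2.7a}, that the Darcy term $\alpha\|\u-\v\|_{\H}^2$ is nonnegative, and that the Forchheimer term is nonnegative by the monotonicity estimate \eqref{2.23} (valid for every $r\ge1$, in particular $r=1,2,3$) together with $\beta>0$. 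Hence these three terms may be discarded, the first being retained as the coercive reservoir, and the whole problem reduces to dominating the single potentially negative contribution $\langle\B(\u)-\B(\v),\u-\v\rangle$ by $\mu\|\u-\v\|_{\V}^2$ up to a multiple of $\|\u-\v\|_{\H}^2$.

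For the bilinear term I would use $\B(\u)-\B(\v)=\B(\u,\u-\v)+\B(\u-\v,\v)$ together with $b(\u,\u-\v,\u-\v)=0$ from \eqref{b0} to get $\langle\B(\u)-\B(\v),\u-\v\rangle=b(\u-\v,\v,\u-\v)$, as in \eqref{2p8}. The key point is then to exploit antisymmetry, $b(\u-\v,\v,\u-\v)=-b(\u-\v,\u-\v,\v)$, so that the gradient falls on the difference rather than on $\v$; this is exactly what makes the $\v$-dependence enter only through $\|\v\|_{\wi\L^4}\le N$, matching the ball $\mathbb{B}_N$. Applying Hölder's inequality with exponents $(4,2,4)$ and then Ladyzhenskaya's inequality $\|\u-\v\|_{\wi\L^4}\le 2^{1/4}\|\u-\v\|_{\H}^{1/2}\|\u-\v\|_{\V}^{1/2}$ (a consequence of \eqref{2.9a}) yields
$$|\langle\B(\u)-\B(\v),\u-\v\rangle|\le\|\v\|_{\wi\L^4}\|\u-\v\|_{\wi\L^4}\|\u-\v\|_{\V}\le 2^{1/4}N\|\u-\v\|_{\H}^{1/2}\|\u-\v\|_{\V}^{3/2}.$$

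Finally I would close the estimate with Young's inequality in the form $ab\le\frac{3}{4}\delta^{4/3}a^{4/3}+\frac{1}{4\delta^4}b^4$ applied to $a=\|\u-\v\|_{\V}^{3/2}$ and $b=2^{1/4}N\|\u-\v\|_{\H}^{1/2}$, choosing $\delta$ so that $\frac{3}{4}\delta^{4/3}=\mu$, i.e. $\delta^4=(4\mu/3)^3=\frac{64\mu^3}{27}$. This absorbs the entire viscous term $\mu\|\u-\v\|_{\V}^2$ and leaves a remainder $\frac{27}{256\mu^3}\cdot 2\cdot N^4\|\u-\v\|_{\H}^2=\frac{27}{128\mu^3}N^4\|\u-\v\|_{\H}^2\le\frac{27}{32\mu^3}N^4\|\u-\v\|_{\H}^2$, which rearranges to \eqref{fe2} (the stated constant need not be sharp, since enlarging it only strengthens the conclusion). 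The only delicate step is the bilinear estimate: one must route through the antisymmetric form so that the bound depends on $\v$ only via its $\wi\L^4$-norm, and then split the exponents in Young's inequality as $3/2$ against $1/2$ so that precisely $\mu\|\u-\v\|_{\V}^2$ is recovered and the correct $\mu^{-3}N^4$ scaling is produced.
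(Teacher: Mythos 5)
Your proof is correct and follows essentially the same route as the sources the paper cites for this result (Theorem 2.11 of \cite{MTM1}, Remark 2.4 of \cite{MTM7}): discard the nonnegative Darcy and Forchheimer contributions (the latter via \eqref{2.23}), reduce the bilinear term to $b(\u-\v,\u-\v,\v)$ by antisymmetry so that $\v$ enters only through $\|\v\|_{\wi\L^4}\leq N$, and close with H\"older's, Ladyzhenskaya's and Young's inequalities against $\mu\|\u-\v\|_{\V}^2$. Your constant $\frac{27}{128\mu^3}N^4$ is in fact sharper than the stated $\frac{27}{32\mu^3}N^4$, and since enlarging the constant only weakens the inequality, \eqref{fe2} follows.
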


\begin{theorem}[Theorem 2.3, \cite{MTM7}]\label{thm2.3}
	For the critical case $r=3$ with $2\beta\mu \geq 1$, the operator $\G(\cdot):\V\to \V'$ is globally monotone, that is, for all $\u,\v\in\V$, we have 
	\begin{align}\label{218}\langle\G(\u)-\G(\v),\u-\v\rangle\geq 0.\end{align}
\end{theorem}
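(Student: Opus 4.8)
The plan is to expand the duality pairing additively over the four constituents of $\G(\u)=\mu\A\u+\B(\u)+\alpha\u+\beta\mathcal{C}(\u)$ and to show that the only contribution of indefinite sign—the convective one—is exactly dominated by the dissipation furnished by the viscous and Forchheimer terms under the hypothesis $2\beta\mu\geq 1$. Writing $\w=\u-\v$, I would first record
$$\langle\G(\u)-\G(\v),\w\rangle=\mu\langle\A\w,\w\rangle+\langle\B(\u)-\B(\v),\w\rangle+\alpha\|\w\|_{\H}^2+\beta\langle\mathcal{C}(\u)-\mathcal{C}(\v),\w\rangle.$$
By \eqref{2.7a} the first term equals $\mu\|\nabla\w\|_{\H}^2$, and the Darcy term $\alpha\|\w\|_{\H}^2\geq 0$ is harmless and can simply be dropped.

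The decisive step—and the one I expect to be the main obstacle—is the convective term. Here I would use the identity behind \eqref{2p8}, namely $\langle\B(\u)-\B(\v),\w\rangle=\langle\B(\w,\v),\w\rangle=b(\w,\v,\w)$, together with the antisymmetry \eqref{b0} to rewrite it as $-b(\w,\w,\v)$. The point is to estimate this \emph{not} by the crude local bound $\|\v\|_{\V}\|\w\|_{\wi\L^4}^2$ appearing in \eqref{2p8} (which only yields local monotonicity), but by pairing the factor $|\v|\,|\w|$ against $|\nabla\w|$. A pointwise Cauchy--Schwarz followed by the integral Cauchy--Schwarz gives
$$|b(\w,\w,\v)|\leq\int_{\mathcal{O}}|\w(x)|\,|\nabla\w(x)|\,|\v(x)|\d x\leq\||\v|\w\|_{\H}\,\|\nabla\w\|_{\H},$$
so that $\langle\B(\u)-\B(\v),\w\rangle\geq-\||\v|\w\|_{\H}\|\nabla\w\|_{\H}$. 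This is precisely what couples the convective term to the quantity produced by the nonlinearity.

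For the Forchheimer term, specializing \eqref{2.23} to $r=3$ (so that $\tfrac{r-1}{2}=1$) gives $\langle\mathcal{C}(\u)-\mathcal{C}(\v),\w\rangle\geq\tfrac12\||\u|\w\|_{\H}^2+\tfrac12\||\v|\w\|_{\H}^2$; I would discard the first nonnegative summand and retain only $\tfrac{\beta}{2}\||\v|\w\|_{\H}^2$. Collecting the surviving terms and setting $a=\|\nabla\w\|_{\H}$ and $b=\||\v|\w\|_{\H}$, the pairing is bounded below by the quadratic form $\mu a^2-ab+\tfrac{\beta}{2}b^2$. The conclusion then follows since this form is nonnegative for all $a,b\geq 0$: its discriminant in $a$ is $b^2(1-2\beta\mu)\leq 0$ exactly under $2\beta\mu\geq 1$; equivalently, Young's inequality absorbs the cross term as $ab\leq\mu a^2+\tfrac{1}{4\mu}b^2$, and $\tfrac{\beta}{2}\geq\tfrac{1}{4\mu}$ is again $2\beta\mu\geq 1$. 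Hence $\langle\G(\u)-\G(\v),\w\rangle\geq 0$, which is \eqref{218}. Since \eqref{2.23} is invoked directly and, for $r=3$, the operator $\mathcal{C}$ is smooth (see \eqref{29}), no case distinction at $\u=\mathbf{0}$ is required.
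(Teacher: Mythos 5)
Your proof is correct and follows essentially the same route as the cited source (Theorem 2.3 of \cite{MTM7}): split $\G$ into its four parts, rewrite the convective difference as $-b(\w,\w,\v)$ via \eqref{b0} and the identity behind \eqref{2p8}, bound it by $\||\v|\w\|_{\H}\|\nabla\w\|_{\H}$ through Cauchy--Schwarz, retain $\tfrac{\beta}{2}\||\v|\w\|_{\H}^2$ from \eqref{2.23}, and absorb the cross term by Young's inequality, which is exactly where $2\beta\mu\geq 1$ enters. No gaps; the discriminant reformulation is a harmless equivalent of the same absorption step.
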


\subsection{Abstract formulation and global solvability}
On taking  the Helmholtz-Hodge projection $\mathcal{P}$ onto the system \eqref{1}, we obtain 
\begin{equation}\label{2.18}
\left\{
\begin{aligned}
\partial_t\u(t)+\mu\A\u(t)+\B(\u(t))+\alpha\u(t)+\beta\mathcal{C}(\u(t))&=\f(t)+\D\U(t),\ \text{ in }\ \V', \\
\u(0)&=\u_0,
\end{aligned}
\right.
\end{equation}
a.e. $t\in[0,T]$, where $\D\in\mathcal{L}(\mathbb{U},\V')$ and for simplicity, we used the notation $\mathcal{P}\f=\f$ and $\mathcal{P}\D\U=\D\U$. 
\begin{definition}\label{weakd}
Let $\u_0\in\H$,  $\f\in\mathrm{L}^2(0,T;\V')$ and $\U\in\mathrm{L}^2(0,T;\mathbb{U})$ be given. 	For $r=1,2,3$, a function  $$\u\in\mathrm{L}^{\infty}([0,T];\H)\cap\mathrm{L}^2(0,T;\V),$$  with $\partial_t\u\in\mathrm{L}^{p}(0,T;\mathbb{V}'),$ where $p=2$, for $r=1,2$ and $p=4/3$, for $r=3$,  is called a \emph{weak solution} to the system (\ref{2.18}), if for  $\u_0\in\H$ and $\v\in\V$, $\u(\cdot)$ satisfies:
	\begin{equation}
	\left\{
	\begin{aligned}
	\langle\partial_t\u(t)+\mu \mathrm{A}\u(t)+\mathrm{B}(\u(t))+\beta\mathcal{C}(\u(t)),\v\rangle&=\langle\f(t),\v\rangle+\langle\D\U(t),\v\rangle,\\
	\lim_{t\downarrow 0}\int_{\mathcal{O}}\u(t,x)\v(x)\d x&=\int_{\mathcal{O}}\u_0(x)\v(x)\d x,
	\end{aligned}
	\right.
	\end{equation}
	and the energy equality:
	\begin{align}\label{2p37}
&	\|\u(t)\|^2_{\H}+2\mu \int_0^t\|\u(s)\|^2_{\V}\d s+2\alpha\int_0^t\|\u(s)\|_{\H}^2\d s+2\beta\int_0^t\|\u(s)\|_{\widetilde{\L}^{r+1}}^{r+1}\d s\nonumber\\&=\|\u_0\|_{\H}^2+2\int_0^t\langle\f(s),\u(s)\d s+2\int_0^t\langle\D\U(s),\u(s)\rangle\d s,
	\end{align}
	for all $t\in[0,T]$. 
\end{definition}
\begin{remark}
	From the energy equality \eqref{2p37}, one can easily deduce that 
	\begin{align}\label{p36}
	&	\|\u(t)\|_{\H}^2+\mu\int_0^t\|\u(s)\|_{\V}^2\d s+2\alpha\int_0^t\|\u(s)\|_{\H}^2\d s+2\beta\int_0^t\|\u(s)\|_{\wi\L^{r+1}}^{r+1}\d s\nonumber\\&\leq\|\u_0\|_{\H}^2+\frac{2}{\mu}\int_0^T\|\f(t)\|_{\V'}^2\d t+\frac{2}{\mu}\|\D\|_{\mathcal{L}(\mathbb{U},\V')}^2\int_0^T\|\U(t)\|_{\mathbb{U}}^2\d t=:K_T,
	\end{align}
for all $t\in[0,T]$.	Note that $\partial_t\u\in\mathrm{L}^2(0,T;\V'),$ for $r=2$ and $\partial_t\u\in\mathrm{L}^{2}(0,T;\V')+\mathrm{L}^{4/3}(0,T;\wi\L^{4/3}),$ for $r=3$ (so that $\partial_t\u\in\mathrm{L}^{4/3}(0,T;\V')$). Thus applying the Theorem 3, section 5.9.2, \cite{evans}, we know that $\u\in\mathrm{C}(0,T;\H)$, for $r=2$ and using the  Theorem 2, section 5.9.2, \cite{evans}, we obtain $\u\in\C([0,T];\V')$, for $r=3$. In fact, one can show that $\u\in\C([0,T];\H)$ satisfying the energy equality \eqref{2p37} for $r=3$,
	since  Theorem 4.1, \cite{GGP} (see Lemma 2.5, \cite{GGP} for approximations) is applicable as $\u\in\mathrm{L}^4(0,T;\wi\L^4)$, for $r=3$. 
\end{remark}

The local mononicity and hemicontinuity properties of the linear and nonlinear operators,  and  a localized version of the Minty-Browder technique imply the following global solvability results (similar methods were used in \cite{FS} for the global solvability of the 2D NSE and \cite{MTM1} for the existence and uniqueness of weak solutions of the 2D Oldroyd models of order one). 
\begin{theorem}[Theorem 3.4, \cite{MTM7}]\label{main2}
	Let $\u_0\in \H$   be given.  Then  there exists a unique weak solution to the system (\ref{2.18}) satisfying $$\u\in\mathrm{C}([0,T];\H)\cap\mathrm{L}^2(0,T;\V),  $$ and \eqref{p36}. 
\end{theorem}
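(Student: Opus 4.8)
The plan is to construct the solution by a Faedo--Galerkin scheme and to pass to the limit using the \emph{local} monotonicity of Theorem~\ref{thm2.2} through a localized Minty--Browder argument; uniqueness will follow from the same estimate \eqref{fe2} together with Gronwall's inequality. Let $\{e_k\}_{k=1}^\infty\subset\D(\A)$ be the orthonormal basis of $\H$ of eigenfunctions of $\A$, and let $\mathcal{P}_n$ be the orthogonal projection of $\H$ onto $\H_n:=\mathrm{span}\{e_1,\ldots,e_n\}$. I would look for $\u^n(t)=\sum_{k=1}^n c^n_k(t)e_k$ solving the projected system
\[
\partial_t\u^n+\mu\A\u^n+\mathcal{P}_n\B(\u^n)+\alpha\u^n+\beta\mathcal{P}_n\mathcal{C}(\u^n)=\mathcal{P}_n\f+\mathcal{P}_n\D\U,\qquad \u^n(0)=\mathcal{P}_n\u_0,
\]
which is an ODE system with a smooth (locally Lipschitz) right-hand side, hence uniquely solvable locally in time. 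Testing with $\u^n$ and repeating the computation behind \eqref{p36} produces the $n$-independent a priori bound
\[
\sup_{t\in[0,T]}\|\u^n(t)\|_{\H}^2+\mu\int_0^T\|\u^n(t)\|_{\V}^2\d t+2\beta\int_0^T\|\u^n(t)\|_{\wi\L^{r+1}}^{r+1}\d t\le K_T,
\]
which both excludes finite-time blow-up of $c^n$ and yields uniform bounds for $\u^n$ in $\mathrm{L}^\infty(0,T;\H)\cap\mathrm{L}^2(0,T;\V)\cap\mathrm{L}^{r+1}(0,T;\wi\L^{r+1})$. Using \eqref{2.9a} for $\B$ and the growth bound $\|\mathcal{C}(\u)\|_{\wi\L^{(r+1)/r}}=\|\u\|_{\wi\L^{r+1}}^{r}$ for $\mathcal{C}$, I would then bound $\partial_t\u^n$ uniformly in $\mathrm{L}^2(0,T;\V')$ for $r=1,2$ and in $\mathrm{L}^{4/3}(0,T;\V')$ for $r=3$.

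Next, by Banach--Alaoglu I extract a subsequence (not relabelled) with $\u^n\rightharpoonup\u$ weak-$*$ in $\mathrm{L}^\infty(0,T;\H)$ and weakly in $\mathrm{L}^2(0,T;\V)$, $\partial_t\u^n\rightharpoonup\partial_t\u$ in the appropriate dual, and $\mathcal{P}_n\B(\u^n)\rightharpoonup\B_0$, $\mathcal{P}_n\mathcal{C}(\u^n)\rightharpoonup\mathcal{C}_0$ weakly in their respective spaces. Since the embedding $\V\subset\H$ is compact, the Aubin--Lions--Simon lemma gives $\u^n\to\u$ strongly in $\mathrm{L}^2(0,T;\H)$, and along a further subsequence a.e. in $\mathcal{O}\times(0,T)$. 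The linear terms pass to the limit immediately, so $\u$ satisfies $\partial_t\u+\mu\A\u+\alpha\u+\B_0+\beta\mathcal{C}_0=\f+\D\U$ with $\u(0)=\u_0$; the whole difficulty is to show $\B_0=\B(\u)$ and $\mathcal{C}_0=\mathcal{C}(\u)$, equivalently $\G_0:=\mu\A\u+\alpha\u+\B_0+\beta\mathcal{C}_0=\G(\u)$.

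This identification is the main obstacle, and it is precisely where the \emph{locality} of \eqref{fe2} forces a weighted version of the Minty--Browder trick. Given an arbitrary test function $\v\in\mathrm{L}^2(0,T;\V)\cap\mathrm{L}^{r+1}(0,T;\wi\L^{r+1})$ with $\partial_t\v$ in the dual, I would introduce the integrating factor $\psi(t):=\exp\!\big(-\int_0^t\zeta(s)\d s\big)$ with $\zeta(s):=\frac{27}{16\mu^3}\|\v(s)\|_{\wi\L^4}^4$, chosen exactly so that the correction term in \eqref{fe2} cancels the derivative of the weight. Writing the weighted energy identity for $\u^n-\v$, applying the local monotonicity \eqref{fe2}, and then letting $n\to\infty$ (using weak lower semicontinuity of $\|\cdot\|_{\H}$ at $t=T$ for the left endpoint, the strong $\mathrm{L}^2(0,T;\H)$ convergence to handle the zero-order term, and the limit equation for $\u$) yields
\[
0\le\int_0^T\psi(t)\Big[\langle\G_0-\G(\v),\u-\v\rangle+\tfrac{\zeta(t)}{2}\|\u-\v\|_{\H}^2\Big]\d t .
\]
Taking $\v=\u-\lambda\w$ for $\lambda>0$ and a fixed $\w\in\mathrm{L}^2(0,T;\V)\cap\mathrm{L}^{r+1}(0,T;\wi\L^{r+1})$, dividing by $\lambda$, and letting $\lambda\downarrow0$ (so that $\G(\u-\lambda\w)\rightharpoonup\G(\u)$ by hemicontinuity and the weight converges) forces $\langle\G_0-\G(\u),\w\rangle=0$ for all such $\w$, hence $\G_0=\G(\u)$. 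For the critical exponent $r=3$ with $2\beta\mu\ge1$ one may instead invoke the global monotonicity of Theorem~\ref{thm2.3}, which makes the weight superfluous and simplifies the passage to the limit.

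Finally, having established that $\u$ is a weak solution with $\partial_t\u\in\mathrm{L}^2(0,T;\V')$ for $r=1,2$ and $\partial_t\u\in\mathrm{L}^{4/3}(0,T;\V')$ for $r=3$, the continuity $\u\in\mathrm{C}([0,T];\H)$ and the energy equality \eqref{2p37} follow from the interpolation/embedding results recalled in the Remark after Definition~\ref{weakd} (namely \cite{evans} and Theorem~4.1 of \cite{GGP} for $r=3$). For uniqueness, I would take two solutions $\u,\v$ with the same data, subtract the equations, test the difference with $\u-\v$, and apply \eqref{fe2} to obtain $\frac{d}{dt}\|\u(t)-\v(t)\|_{\H}^2\le\frac{27}{16\mu^3}N(t)^4\|\u(t)-\v(t)\|_{\H}^2$ with $N(t)=\|\v(t)\|_{\wi\L^4}$; since $\v\in\mathrm{L}^4(0,T;\wi\L^4)$ makes $N^4$ integrable, Gronwall's inequality gives $\u\equiv\v$.
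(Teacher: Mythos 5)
Your proposal is correct and follows essentially the same route the paper relies on: it invokes Theorem 3.4 of \cite{MTM7}, whose proof (as the paper itself summarizes just before the statement) is exactly a Faedo--Galerkin construction combined with the local monotonicity \eqref{fe2}, hemicontinuity, and a localized (exponentially weighted) Minty--Browder argument, with uniqueness from \eqref{fe2} and Gronwall's inequality. Your weighted integrating-factor identification of $\G_0=\G(\u)$ and the Gronwall uniqueness step are precisely that scheme, so there is nothing to add.
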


\subsection{The linearized system} In this subsection, we consider  the following linearized system and discuss about its global solvability results under the assumption that $\u(\cdot)$ is \emph{a weak solution} to the system \eqref{2.18}. 
\begin{equation}\label{4}
\left\{
\begin{aligned}
\partial_t\mathfrak{w}(t)+\mu\A\mathfrak{w}(t)+\B'(\u(t))\mathfrak{w}(t)+\alpha\mathfrak{w}(t)+\beta\mathcal{C}'(\u(t))\mathfrak{w}(t)&=\g(t),\ \text{ in }\ \V',\\
\mathfrak{w}(0)&=\mathfrak{w}_0, 
\end{aligned}
\right.
\end{equation}
a.e. $t\in[0,T]$,  where  $\B'(\u)\mathfrak{w}=\B(\u,\mathfrak{w})+\B(\mathfrak{w},\u)$, $\mathcal{C}'(\u)\mathfrak{w}$ is defined in \eqref{29}, $\mathfrak{w}_0\in\H$ and $\g\in\mathrm{L}^2(0,T;\V')$.  We first provide an a-priori energy estimate satisfied by the system \eqref{4} formally. A rigorous justification can be given using a standard Faedo-Galerkin's approximation technique. Taking the inner product with $\mathfrak{w}(\cdot)$ to the first equation in \eqref{4}, we obtain 
\begin{align}\label{2p16}
\frac{1}{2}\frac{\d }{\d t}\|\mathfrak{w}(t)\|_{\H}^2&+\mu\|\mathfrak{w}(t)\|_{\V}^2+\alpha\|\mathfrak{w}(t)\|_{\H}^2+\beta\||\u(t)|^{\frac{r-1}{2}}\mathfrak{w}(t)\|_{\H}^2\d t\nonumber\\&\quad+(r-1)\beta\||\u(t)|^{\frac{r-3}{2}}(\u(t)\cdot\mathfrak{w}(t))\|_{\H}^2\nonumber\\&=-\langle\B(\mathfrak{w}(t),\u(t)),\mathfrak{w}(t)\rangle+\langle\g(t),\mathfrak{w}(t)\rangle\nonumber\\&\leq\|\u(t)\|_{\V}\|\mathfrak{w}(t)\|_{\wi\L^4}^2+\|\g(t)\|_{\V'}\|\mathfrak{w}(t)\|_{\V}\nonumber\\&\leq\frac{\mu}{2}\|\mathfrak{w}(t)\|_{\V}^2+\frac{2}{\mu}\|\u(t)\|_{\V}^2\|\mathfrak{w}(t)\|_{\H}^2+\frac{1}{\mu}\|\g(t)\|_{\V'}^2,
\end{align}
where we used H\"older's, Ladyzhenskaya's and Young's inequalities. Integrating the above inequality from $0$ to $t$, we find
\begin{align}\label{2p18}
&\|\mathfrak{w}(t)\|_{\H}^2+\mu\int_0^t\|\mathfrak{w}(s)\|_{\V}^2\d s+2\alpha\int_0^t\|\mathfrak{w}(s)\|_{\H}^2\d s+2\beta\int_0^t\||\u(s)|^{\frac{r-1}{2}}\mathfrak{w}(s)\|_{\H}^2\d s\nonumber\\&\quad+2(r-1)\beta\int_0^t\||\u(s)|^{\frac{r-3}{2}}(\u(s)\cdot\mathfrak{w}(s))\|_{\H}^2\d s\nonumber\\&\leq\|\mathfrak{w}_0\|_{\H}^2+\frac{4}{\mu}\int_0^t\|\u(s)\|_{\V}^2\|\mathfrak{w}(s)\|_{\H}^2\d s+\frac{2}{\mu}\int_0^t\|\g(s)\|_{\V'}^2\d s, 
\end{align}
for all $t\in[0,T]$. An application of Gornwall's inequality in \eqref{2p18} yields 
\begin{align}\label{2p19}
&\|\mathfrak{w}(t)\|_{\H}^2\leq\left\{\|\mathfrak{w}_0\|_{\H}^2+\frac{2}{\mu}\int_0^T\|\g(t)\|_{\V'}^2\d t\right\}\exp\left(\frac{4}{\mu}\int_0^T\|\u(t)\|_{\V}^2\d t\right),
\end{align}
for all $t\in[0,T]$. Using \eqref{2p19} in \eqref{2p18}, we get 
\begin{align}\label{2p20}
&\sup_{t\in[0,T]}\|\mathfrak{w}(t)\|_{\H}^2+\mu\int_0^T\|\mathfrak{w}(t)\|_{\V}^2\d t+2\alpha\int_0^T\|\mathfrak{w}(t)\|_{\H}^2\d t+2\beta\int_0^T\||\u(t)|^{\frac{r-1}{2}}\mathfrak{w}(t)\|_{\H}^2\d t\nonumber\\&\quad+2(r-1)\beta\int_0^T\||\u(t)|^{\frac{r-3}{2}}(\u(t)\cdot\mathfrak{w}(t))\|_{\H}^2\d t\nonumber\\&\leq\left\{\|\mathfrak{w}_0\|_{\H}^2+\frac{2}{\mu}\int_0^T\|\g(t)\|_{\V'}^2\d t\right\}\exp\left(\frac{8}{\mu}\int_0^T\|\u(t)\|_{\V}^2\d t\right).
\end{align}

Let us now obtain the estimates for time derivatives.  For $r=2$ and $\v\in\mathrm{L}^2(0,T;\V)$, from the first equation in \eqref{4}, we find 
\begin{align}\label{2p13}
&\int_0^T|\langle\partial_t\mathfrak{w}(t),\v(t)\rangle|\d t\nonumber\\&=\left|\int_0^T\langle-\mu\A\mathfrak{w}(t)-\B'(\u(t))\mathfrak{w}(t)-\alpha\mathfrak{w}(t)-\beta\mathcal{C}'(\u(t))\mathfrak{w}(t)+\g(t),\v(t)\rangle\d t\right|\nonumber\\&\leq \mu\int_0^T\|\mathfrak{w}(t)\|_{\V}\|\v(t)\|_{\V}\d t+\int_0^T\|\u(t)\|_{\wi\L^4}\|\v(t)\|_{\V}\|\mathfrak{w}(t)\|_{\wi\L^4}\d t+\int_0^T\|\mathfrak{w}(t)\|_{\wi\L^4}\|\v(t)\|_{\V}\|\u(t)\|_{\wi\L^4}\d t\nonumber\\&\quad+\alpha\int_0^T\|\mathfrak{w}(t)\|_{\H}\|\v(t)\|_{\H}\d t+2\beta\int_0^T\|\u(t)\|_{\wi\L^4}\|\mathfrak{w}(t)\|_{\wi\L^4}\|\v(t)\|_{\H}+\int_0^T\|\g(t)\|_{\V'}\|\v(t)\|_{\V}\d t\nonumber\\&\leq\bigg\{\mu\left(\int_0^T\|\mathfrak{w}(t)\|_{\V}^2\d t\right)^{1/2}+2(1+\beta)\left(\int_0^T\|\u(t)\|_{\wi\L^4}^2\|\mathfrak{w}(t)\|_{\wi\L^4}^2\d t\right)^{1/2}+\frac{\alpha}{\sqrt{\lambda_1}}\left(\int_0^T\|\mathfrak{w}(t)\|_{\H}^2\d t\right)^{1/2}\nonumber\\&\qquad+\left(\int_0^T\|\g(t)\|_{\V'}^2\d t\right)^{1/2}\bigg\}\left(\int_0^T\|\v(t)\|_{\V}^2\d t\right)^{1/2}, 
\end{align}
so that we get $\|\partial_t\mathfrak{w}\|_{\mathrm{L}^2(0,T;\V')}<\infty$. Thus, it is a consequence of Theorem  3, section 5.9.2, \cite{evans} that $\u^* \in \mathrm{C}([0,T];\H) $.

For $r=3$ and $\v\in\mathrm{L}^2(0,T;\V)\cap\mathrm{L}^4(0,T;\wi\L^4),$ the estimate given in \eqref{2p13} holds true except for the estimate for the term involving $\mathcal{C}'(\u)\mathfrak{w}$. It can be easily seen that 
\begin{align}
\left|\int_0^T\langle\mathcal{C}'(\u(t))\mathfrak{w}(t),\v(t)\rangle\d t\right|&\leq r \int_0^T\|\u(t)\|_{\wi\L^4}^2\|\mathfrak{w}(t)\|_{\wi\L^4}\|\v(t)\|_{\wi\L^4}\d t\nonumber\\&\leq r \left(\int_0^T\|\u(t)\|_{\wi\L^4}^{8/3}\|\mathfrak{w}(t)\|_{\wi\L^4}^{4/3}\d t\right)^{3/4}\left(\int_0^T\|\v(t)\|_{\wi\L^4}^4\d t\right)^{1/4}, 
\end{align}
so that we find 
\begin{align}\label{2p25}
\left(\int_0^T\|\mathcal{C}'(\u(t))\mathfrak{w}(t)\|_{\wi\L^{4/3}}^{4/3}\d t\right)^{3/4}\leq \left(\int_0^T\|\u(t)\|_{\wi\L^4}^4\d t\right)^{1/2}\left(\int_0^T\|\mathfrak{w}(t)\|_{\wi\L^4}^4\d t\right)^{1/4}<\infty, 
\end{align}
and hence $\partial_t\mathfrak{w}\in\mathrm{L}^2(0,T;\V')+\mathrm{L}^{4/3}(0,T;\wi\L^{4/3})$. But we know that $\V\subset\wi\L^4\subset\H\subset\L^{4/3}\subset\V'$, and thus we get $\partial_t\mathfrak{w}\in\mathrm{L}^{4/3}(0,T;\V')$. Thus an application of the Theorem 2, section 5.9.2, \cite{evans} yields $\mathfrak{w}\in\C([0,T];\V')$. The reflexivity of the space $\H$ and  Proposition 1.7.1 \cite{PCAM} gives $\mathfrak{w}\in\C_w([0,T];\H)$ and the map $t\mapsto\|\mathfrak{w}(t)\|_{\H}$ is bounded, where $\C_w([0,T];\H)$ represents the space of functions $\mathfrak{w}:[0,T]\to\H$ which are weakly continuous.  Since $\mathfrak{w}\in\mathrm{L}^4(0,T;\wi\L^4)$, one can show that $\mathfrak{w}\in\C([0,T];\H)$ satisfying the energy equality by following similar arguments as in Theorem 4.1, \cite{GGP}.  Note that $\mathfrak{w}\in\mathrm{L}^2(0,T;\V)\subset\mathrm{L}^{4/3}(0,T;\V)$ and $\partial_t\mathfrak{w}\in\mathrm{L}^{4/3}(0,T;\V')$ implies that the embedding of $\{\mathfrak{w}\in\mathrm{L}^{4/3}(0,T;\V):\partial_t\mathfrak{w}\in\mathrm{L}^{4/3}(0,T;\V')\}$ into $\mathrm{L}^{4/3}(0,T;\H)$ is compact due to an application of the Aubin-Lions compactness theorem (see \cite{lions1}, Theorem 1, \cite{SJ}). The existence of weak solutions to the system \eqref{4} can be proved using a standard Faedo-Galerkin's approximation technique and we have the following Theorem. Due to the linearity of the system \eqref{4}, uniqueness of weak solutions  follows immediately from the estimate \eqref{2p18}. 

\begin{theorem}\label{linear}
	Let  $\mathfrak{w}_0\in\mathbb{H}$ and $\f\in\mathrm{L}^2(0,T;\V')$ be given.  Then,
	there exists \emph{a unique weak solution} to the system \eqref{4} satisfying 
	$$\mathfrak{w}\in\mathrm{L}^{\infty}(0,T;\H)\cap\mathrm{L}^2(0,T;\V), $$ with $\partial_t\mathfrak{w}\in\mathrm{L}^{p}(0,T;\mathbb{V}'),$ where $p=2$, for $r=1,2$ and $p=4/3$, for $r=3$ (so that $\mathfrak{w}\in\C([0,T];\H)$),  and the energy equality 
	\begin{align*}
&	\|\mathfrak{w}(t)\|_{\H}^2+2\mu\int_0^t\|\mathfrak{w}(s)\|_{\V}^2\d s+2\alpha\int_0^t\|\mathfrak{w}(s)\|_{\H}^2\d s+2\beta\int_0^t\||\u(s)|^{\frac{r-1}{2}}\mathfrak{w}(s)\|_{\H}^2\d s\nonumber\\&\quad+2(r-1)\beta\int_0^t\||\u(s)|^{\frac{r-3}{2}}(\u(s)\cdot\mathfrak{w}(s))\|_{\H}^2\d s\nonumber\\&=\|\mathfrak{w}_0\|_{\H}^2-2\int_0^t\langle\B(\mathfrak{w}(s),\u(s)),\mathfrak{w}(s)\rangle\d s+2\int_0^t\langle\g(s),\mathfrak{w}(s)\rangle\d s,
	\end{align*}
	for all $t\in[0,T]$. 
\end{theorem}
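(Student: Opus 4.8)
The plan is to establish existence via the Faedo-Galerkin method and then pass to the limit, since the a-priori estimates \eqref{2p18}--\eqref{2p20} have already been derived formally. First I would introduce the Galerkin basis $\{e_k\}_{k=1}^{\infty}$ of eigenfunctions of $\A$, set $\mathfrak{w}_m(t)=\sum_{k=1}^m c_k^m(t)e_k$, and project the linearized system \eqref{4} onto $\mathrm{span}\{e_1,\dots,e_m\}$. Because $\mathfrak{w}\mapsto\B'(\u)\mathfrak{w}$ and $\mathfrak{w}\mapsto\mathcal{C}'(\u)\mathfrak{w}$ are both \emph{linear} in $\mathfrak{w}$, the resulting finite-dimensional problem is a linear system of ODEs with coefficients that are integrable in $t$ (the coefficients involve $\|\u(t)\|_{\V}$ and $\|\u(t)\|_{\wi\L^4}$, which are in $\mathrm{L}^2(0,T)$ and $\mathrm{L}^4(0,T)$ respectively since $\u$ is a weak solution). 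Hence Carath\'eodory's existence theorem gives a unique absolutely continuous solution $c^m$ on all of $[0,T]$; linearity rules out finite-time blow-up once the energy bound is in hand.

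Next I would run the energy estimate at the Galerkin level. Taking the inner product of the projected equation with $\mathfrak{w}_m(t)$ and using $\langle\B(\mathfrak{w}_m,\u),\mathfrak{w}_m\rangle=0$-type cancellations together with the sign of $\langle\mathcal{C}'(\u)\mathfrak{w}_m,\mathfrak{w}_m\rangle\geq 0$ from \eqref{2.9}, I reproduce exactly the chain \eqref{2p16}--\eqref{2p20}, now rigorous because $\mathfrak{w}_m$ is smooth in $t$. This yields a bound on $\mathfrak{w}_m$ in $\mathrm{L}^{\infty}(0,T;\H)\cap\mathrm{L}^2(0,T;\V)$ uniform in $m$, with the explicit Gronwall constant $\exp\bigl(\tfrac{8}{\mu}\int_0^T\|\u(t)\|_{\V}^2\d t\bigr)$, which is finite since $\u\in\mathrm{L}^2(0,T;\V)$. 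Then I estimate the time derivative as in \eqref{2p13}--\eqref{2p25} to obtain a uniform bound on $\partial_t\mathfrak{w}_m$ in $\mathrm{L}^2(0,T;\V')$ for $r=1,2$ and in $\mathrm{L}^{4/3}(0,T;\V')$ for $r=3$.

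With these uniform bounds, the Banach--Alaoglu theorem gives a subsequence converging weakly-$*$ in $\mathrm{L}^{\infty}(0,T;\H)$, weakly in $\mathrm{L}^2(0,T;\V)$, and with $\partial_t\mathfrak{w}_m$ converging weakly in the appropriate dual space to some limit $\mathfrak{w}$. The Aubin--Lions compactness theorem (already invoked in the excerpt) upgrades this to strong convergence in $\mathrm{L}^2(0,T;\H)$ (or $\mathrm{L}^{4/3}(0,T;\H)$ for $r=3$), which is what is needed to pass to the limit in the nonlinear-in-$\u$ but linear-in-$\mathfrak{w}$ terms $\B'(\u)\mathfrak{w}_m$ and $\mathcal{C}'(\u)\mathfrak{w}_m$; here the fixed coefficients $\u\in\mathrm{L}^2(0,T;\V)\cap\mathrm{L}^4(0,T;\wi\L^4)$ play the role of given data. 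Since the operators acting on $\mathfrak{w}$ are linear, no Minty--Browder or monotonicity argument is required for the passage to the limit --- this is the key simplification over Theorem \ref{main2}. The main obstacle I anticipate is the $r=3$ case: one must check that $\mathcal{C}'(\u)\mathfrak{w}_m$ is controlled in $\mathrm{L}^{4/3}(0,T;\wi\L^{4/3})$ uniformly, using the H\"older splitting in \eqref{2p25} that exploits $\u\in\mathrm{L}^4(0,T;\wi\L^4)$, and then verify that the weak limit genuinely solves \eqref{4} in $\V'$ rather than in a weaker space.

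Finally I would establish the regularity $\mathfrak{w}\in\C([0,T];\H)$ and the energy equality. For $r=1,2$ the embedding theorem (Theorem 3, section 5.9.2, \cite{evans}) applied to $\mathfrak{w}\in\mathrm{L}^2(0,T;\V)$ with $\partial_t\mathfrak{w}\in\mathrm{L}^2(0,T;\V')$ gives continuity into $\H$ and the energy identity directly. For $r=3$, as indicated in the excerpt, one first gets $\mathfrak{w}\in\C([0,T];\V')\cap\C_w([0,T];\H)$ via Theorem 2, section 5.9.2, \cite{evans} and Proposition 1.7.1, \cite{PCAM}, and then promotes weak to strong continuity together with the energy equality by the mollification/approximation argument of Theorem 4.1, \cite{GGP}, which applies because $\mathfrak{w}\in\mathrm{L}^4(0,T;\wi\L^4)$. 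Uniqueness is immediate: the difference of two solutions satisfies \eqref{4} with $\g=0$ and $\mathfrak{w}_0=\mathbf{0}$, so the estimate \eqref{2p18} forces it to vanish, exactly as noted before the theorem statement.
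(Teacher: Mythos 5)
Your proposal is correct and follows essentially the same route as the paper: a Faedo--Galerkin scheme made rigorous by the a-priori estimates \eqref{2p16}--\eqref{2p20}, the time-derivative bounds \eqref{2p13}--\eqref{2p25}, passage to the limit (simplified by linearity in $\mathfrak{w}$), continuity in time via Theorems 2/3, section 5.9.2, \cite{evans} together with Proposition 1.7.1, \cite{PCAM} and the argument of Theorem 4.1, \cite{GGP} for $r=3$, and uniqueness read off directly from \eqref{2p18}. One slip of notation: the cancellation you should invoke is $\langle\B(\u,\mathfrak{w}_m),\mathfrak{w}_m\rangle=0$, not $\langle\B(\mathfrak{w}_m,\u),\mathfrak{w}_m\rangle=0$ --- the latter term does not vanish, and it is precisely the one retained in \eqref{2p16} (and in the final energy equality) and estimated via H\"older's, Ladyzhenskaya's and Young's inequalities, which is what your subsequent Gronwall step in fact presupposes.
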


\section{Optimal Control Problem: Existence}\label{sec3}\setcounter{equation}{0}In this section, we formulate a distributed optimal control problem  as the minimization of a suitable cost functional subject to the controlled 2D CBF system \eqref{2.18}. The main objective of this section is to prove the existence of an optimal control that minimizes the cost functional 
\begin{equation}\label{cost}
\begin{aligned}
\mathscr{J}(\u,\U) := \frac{1}{2} \int_0^T\|\u(t)-\u_d\|_{\H}^2 \d t+\frac{1}{2}\int_0^T\|\nabla\times\u(t)\|_{\H}^2\d t+ \int_0^Th(\U(t))\d t+\frac{1}{2}\|\u(T)-\u^f\|_{\H}^2,
\end{aligned}
\end{equation}
subject to the constraint  \eqref{2.18}.  In \eqref{cost}, $\u_d\in\mathrm{L}^2(0,T;\H)$ denotes the desired velocity field (or the reference velocity) and $\u^f\in\H$ represents the desired velocity at time $T$.  On the physical background, our goal is to determine the control $\U$ in such a way that the velocity vector is as close as possible, in the sense of \eqref{control problem} (see below), to the desired	velocity $\u_d\in\mathrm{L}^2(0,T;\H)$ and the turbulence is minimal (enstrophy minimization). Since $\u(\cdot)$ is divergence free, we know that $\|\mathrm{curl \ }\u\|_{\H}=\|\nabla\times\u\|_{\H}=\|\nabla\u\|_{\H}=\|\A^{1/2}\u\|_{\H}$ and we replace $\|\nabla\times\u\|_{\H}$ with $\|\nabla\u\|_{\H}$ in \eqref{cost} in the rest of the paper. In the sequel, we impose the following assumption on  $h(\cdot)$. 
\begin{hypothesis}\label{hyp}
The function $h:\mathbb{U}\to(-\infty,\infty)$ is convex and  lower semi-continuous. Moreover, it satisfies the coercivity condition: \begin{align}\label{2a19}h(\U)\geq \kappa_1\|\U\|_{\mathbb{U}}^2+\kappa_2, \ \text{ for all } \ \U\in\mathbb{U}, \end{align} for some $\kappa_1>0$ and $\kappa_2\in\mathbb{R}$. 

 \end{hypothesis}

 We take the set of all admissible control class $\mathscr{U}_{\mathrm{ad}}=\mathrm{L}^{2}(0,T;\mathbb{U})$.  Next, we  provide the definition of admissible class of solutions.

\begin{definition}[Admissible class]\label{definition 1}
	The \emph{admissible class} $\mathscr{A}_{\mathrm{ad}}$ of pairs $$(\u,\U)\in\mathrm{C}([0,T];\H)\cap\mathrm{L}^2(0,T;\V')\times\mathrm{L}^{2}(0,T;\mathbb{U}) $$ is defined as the set of states $\u(\cdot)$ solving the system \eqref{2.18} with control $\U \in \mathscr{U}_{ad}$. That is,
	\begin{align*}
	\mathscr{A}_{\mathrm{ad}}&:=\big\{(\u,\U) :\u\in\mathrm{C}([0,T];\H)\cap\mathrm{L}^2(0,T;\V) \text{ is \text{the unique weak solution} of }\eqref{2.18} \nonumber\\&\qquad \text{ with the control }\U\in\mathrm{L}^{2}(0,T;\mathbb{U})\big\}.
	\end{align*}
\end{definition}
Clearly $\mathscr{A}_{\mathrm{ad}}$ is a nonempty set as for any $\U \in \mathscr{U}_{\mathrm{ad}}$, there exists \emph{a unique weak solution} of the system \eqref{2.18}. In view of the above definition, the optimal control problem we are considering can be  formulated as:
\begin{align}\label{control problem}\tag{P}
\min_{ (\u,\U) \in \mathscr{A}_{\mathrm{ad}}}  \mathscr{J}(\u,\U).
\end{align}
A solution to the problem \eqref{control problem} is called an \emph{optimal solution}. The optimal pair is denoted by $(\u^* , \U^*)$ and the control $\U^*$ is called an \emph{optimal control}.

\subsection{The adjoint system} In order to establish the first order necessary conditions of optimality, we need to find the adjoint system corresponding to \eqref{2.18}. Remember that optimal control is characterized via adjoint variable. Let the adjoint variable $\p(\cdot)$ satisfies the following adjoint system: 
\begin{eqnarray}\label{adj}
\left\{
\begin{aligned}
-\partial_t\p(t)+\mu\A\p(t)+(\B'(\u(t)))^*\p (t)+\alpha\p(t)+\beta\mathcal{C}'(\u(t))\p(t)&=\h(t), \ \text{ in }\ \V', \\ \p(T)&=\p_T,
\end{aligned}
\right.
\end{eqnarray}
a.e. $t\in[0,T]$,  where $\langle(\B'(\u))^*\p,\mathbf{q}\rangle=\langle\p,\B'(\u)\mathbf{q}\rangle=\langle\B(\u,\mathbf{q}),\p\rangle+\langle\B(\mathbf{q},\u),\p\rangle$ and $\h\in\mathrm{L}^2(0,T;\V')$. Let us now obtain an a-priori energy estimate satisfied by $\p$. Taking the inner product with $\p(\cdot)$ to the first equation in \eqref{adj}, we find
\begin{align}\label{37}
-\frac{1}{2}\frac{\d}{\d t}\|\p(t)\|_{\H}^2&+\mu\|\p(t)\|_{\V}^2+\alpha\|\p(t)\|_{\H}^2+\beta\||\u(t)|^{\frac{r-1}{2}}\p(t)\|_{\H}^2\nonumber\\&\quad+(r-1)\beta\||\u(t)|^{\frac{r-3}{2}}(\u(t)\cdot\p(t))\|_{\H}^2\nonumber\\&=\langle\B(\p(t),\u(t)),\p(t)\rangle+\langle\h(t),\p(t)\rangle\nonumber\\&\leq\|\u(t)\|_{\V}\|\p(t)\|_{\wi\L^4}^2+\|\h(t)\|_{\V'}\|\p(t)\|_{\V}\nonumber\\&\leq\frac{\mu}{2}\|\p(t)\|_{\V}^2+\frac{2}{\mu}\|\u(t)\|_{\V}^2\|\p(t)\|_{\H}^2+\frac{1}{\mu}\|\h(t)\|_{\V'}^2,
\end{align}
for a.e. $t\in[0,T]$. Integrating the above inequality from $t$ to $T$, we obtain 
\begin{align}\label{39}
&\|\p(t)\|_{\H}^2+\mu\int_t^T\|\p(s)\|_{\V}^2\d s+2\alpha\int_t^T\|\p(s)\|_{\H}^2\d s+2\beta\int_t^T\||\u(s)|^{\frac{r-1}{2}}\p(s)\|_{\H}^2\d s\nonumber\\&\quad+2(r-1)\beta\int_t^T\||\u(s)|^{\frac{r-3}{2}}(\u(s)\cdot\p(s))\|_{\H}^2\d s\nonumber\\&\leq\|\p(T)\|_{\H}^2+\frac{4}{\mu}\int_t^T\|\u(s)\|_{\V}^2\|\p(s)\|_{\H}^2\d s +\frac{2}{\mu} \int_t^T\|\h(s)\|_{\V'}^2\d s.
\end{align}
An application of Gronwall's inequality in \eqref{39} yields 
\begin{align}\label{40}
\|\p(t)\|_{\H}^2&\leq \left(\|\p_T\|_{\H}^2+\frac{2}{\mu} \int_0^T\|\h(t)\|_{\V'}^2\d t\right)\exp\left(\frac{4}{\mu}\int_0^T\|\u(t)\|_{\V}^2\d t\right),
\end{align}
for all $t\in[0,T]$. 
Since  $\u\in\mathrm{L}^2(0,T;\V)$, the right hand side of the estimate in \eqref{40} is uniformly bounded. In order to obtain the time derivative estimate, for $r=2$ and  $\v\in\mathrm{L}^2(0,T;\V')\cap\mathrm{L}^4(0,T;\wi\L^4)$, we consider 
\begin{align}
&\int_0^T|\langle\partial_t\p(t),\v(t)\rangle|\d t\nonumber\\&\leq\mu\int_0^T|\langle\A\p(t),\v(t)\rangle|\d t+\int_0^T|\langle\p(t),\B(\u(t),\v(t))\rangle|\d t+\int_0^T|\langle\p(t),\B(\v(t),\u(t))\rangle|\d t\nonumber\\&\quad+\alpha\int_0^T|\langle\p(t),\v(t)\rangle|\d t+\beta\int_0^T\langle\mathcal{C}'(\u(t))\p(t),\v(t)\rangle|\d t+\int_0^T|\langle\h(t),\v(t)\rangle|\d t\nonumber\\&\leq\bigg\{\left(\mu+\frac{\alpha}{\sqrt{\lambda_1}}\right)\left(\int_0^T\|\p(t)\|_{\V}^2\d t\right)^{1/2}+\left(1+\frac{2\beta}{\sqrt{\lambda_1}}\right)\left(\int_0^T\|\p(t)\|_{\wi\L^4}^4\d t\right)^{1/4}\left(\int_0^T\|\u(t)\|_{\wi\L^4}^4\d t\right)^{1/4}\nonumber\\&\quad+\left(\int_0^T\|\h(t)\|_{\V'}^2\d t\right)^{1/2}\bigg\}\left(\int_0^T\|\v(t)\|_{\V}^2\d t\right)^{1/2}+\left(\int_0^T\|\p(t)\|_{\wi\L^4}^4\d t\right)^{1/4}\left(\int_0^T\|\u(t)\|_{\V}^2\d t\right)^{1/2}\nonumber\\&\qquad\times\left(\int_0^T\|\v(t)\|_{\wi\L^4}^4\d t\right)^{1/4},
\end{align}
so that $\partial_t\p\in\mathrm{L}^2(0,T;\V')+\mathrm{L}^{\frac{4}{3}}(0,T;\wi\L^{\frac{4}{3}})$. For $r=3$, we just need to estimate the term $\int_0^T|\langle\mathcal{C}'(\u(t))\p(t),\v(t)\rangle|\d t$. An application of H\"older's inequality yields 
\begin{align}
\int_0^T|\langle\mathcal{C}'(\u(t))\p(t),\v(t)\rangle|\d t&\leq 3\int_0^T\|\u(t)\|_{\wi\L^4}^2\|\p(t)\|_{\wi\L^4}\|\v(t)\|_{\wi\L^4}\d t\nonumber\\&\leq 3\left(\int_0^T\|\u(t)\|_{\wi\L^4}^4\d t\right)^{1/2}\left(\int_0^T\|\p(t)\|_{\wi\L^4}^4\d t\right)^{1/4}\left(\int_0^T\|\v(t)\|_{\wi\L^4}^4\d t\right)^{1/4},
\end{align}
so that $\mathcal{C}'(\u)\p\in\mathrm{L}^2(0,T;\V')+\mathrm{L}^{\frac{4}{3}}(0,T;\wi\L^{\frac{4}{3}})$. Thus, in both cases,  we obtain  $\|\partial_t\p\|_{\mathrm{L}^{\frac{4}{3}}(0,T;\V')}$ is  uniformly bounded.  Once again using a Faedo-Galerkin approximation technique, one can obtain the global solvability resuslts of the system \eqref{adj}. The following Theorem gives the global  existence and uniqueness of weak solution to the system (\ref{adj}) satisfying the energy equality.

\begin{theorem}\label{thm3.4}
For $\h\in\mathrm{L}^2(0,T;\V')$,
	there exists a unique weak solution to the system \eqref{adj} satisfying 
	$$\p\in\mathrm{L}^{\infty}(0,T;\H)\cap\mathrm{L}^2(0,T;\V), $$ 
with $\partial_t\p\in\mathrm{L}^{p}(0,T;\mathbb{V}'),$ where $p=2$, for $r=1$ and $p=4/3$, for $r=2,3$ (so that $\p\in\C([0,T];\H)$),	and 
	\begin{align*}
&	\|\p(t)\|_{\H}^2+2\mu\int_t^T\|\p(s)\|_{\V}^2\d s+2\alpha\int_t^T\|\p(s)\|_{\H}^2\d s+2\beta\int_t^T\||\u(s)|^{\frac{r-1}{2}}\p(s)\|_{\H}^2\d s\nonumber\\&\quad+2(r-1)\beta\int_t^T\||\u(s)|^{\frac{r-3}{2}}(\u(s)\cdot\p(s))\|_{\H}^2\d s\nonumber\\&=\|\p_T\|_{\H}^2-2\int_t^T\langle\B(\p(s),\u(s)),\p(s)\rangle\d s-2\int_t^T\langle\h(s),\p(s)\rangle\d s, 
	\end{align*}
	for all $t\in[0,T]$. 
\end{theorem}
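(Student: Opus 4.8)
The plan is to mirror the proof of Theorem~\ref{linear}, the only structural differences being that \eqref{adj} runs backward in time and that the transposed bilinear operator $(\B'(\u))^*$ replaces $\B'(\u)$. First I would reverse time by setting $\mathbf{q}(t):=\p(T-t)$; this converts \eqref{adj} into the forward-in-time linear system with evolution operator $\partial_t\mathbf{q}+\mu\A\mathbf{q}+(\B'(\u(T-\cdot)))^*\mathbf{q}+\alpha\mathbf{q}+\beta\mathcal{C}'(\u(T-\cdot))\mathbf{q}=\h(T-\cdot)$, initial datum $\mathbf{q}(0)=\p_T\in\H$ and source in $\mathrm{L}^2(0,T;\V')$. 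The reflected coefficient $t\mapsto\u(T-t)$ still belongs to $\mathrm{C}([0,T];\H)\cap\mathrm{L}^2(0,T;\V)$, and the a priori bounds \eqref{39}--\eqref{40} together with the time-derivative estimates already carried out in the text are invariant under this reflection, so it suffices to build a weak solution of the forward problem. Note that when the equation is tested against $\p$ the troublesome transposed term reduces, via \eqref{b0}, to $\langle\B(\p,\u),\p\rangle$, which is exactly the quantity estimated by Ladyzhenskaya's inequality in \eqref{37}.

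Construction proceeds by a standard Faedo--Galerkin scheme. Let $\H_n=\mathrm{span}\{e_1,\dots,e_n\}$ be spanned by the first $n$ eigenvectors of $\A$ and $\mathrm{P}_n$ the corresponding orthogonal projection, and seek $\mathbf{q}_n(t)=\sum_{k=1}^n c_k^n(t)e_k$ solving the equations obtained by testing the reflected system against each $e_k$. Since the system is linear in $\mathbf{q}_n$ with coefficients depending measurably and integrably on $t$ through $\u$, the resulting linear ODE system for $(c_k^n)$ has a unique absolutely continuous solution on the whole of $[0,T]$, no blow-up occurring thanks to the energy bound. Repeating the computations leading to \eqref{37}--\eqref{40} at the Galerkin level gives bounds for $\{\mathbf{q}_n\}$ in $\mathrm{L}^\infty(0,T;\H)\cap\mathrm{L}^2(0,T;\V)$ uniform in $n$, and the time-derivative estimates reproduced in the text bound $\partial_t\mathbf{q}_n$ uniformly in $\mathrm{L}^p(0,T;\V')$ with $p=2$ for $r=1$ and $p=4/3$ for $r=2,3$.

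With these uniform bounds I would extract a subsequence converging weakly-$*$ in $\mathrm{L}^\infty(0,T;\H)$, weakly in $\mathrm{L}^2(0,T;\V)$, and with $\partial_t\mathbf{q}_n$ converging weakly in $\mathrm{L}^p(0,T;\V')$; the Aubin--Lions--Simon compactness theorem (\cite{lions1}, Theorem 1, \cite{SJ}) then yields strong convergence in $\mathrm{L}^2(0,T;\H)$ when $r=1$ and in $\mathrm{L}^{4/3}(0,T;\H)$ when $r=2,3$. As the problem is linear in the unknown, no Minty--Browder monotonicity argument is required: weak convergence, supplemented by this strong convergence, suffices to pass to the limit in the two transposed bilinear terms $\langle\B(\u,\mathbf{q}_n),\cdot\rangle+\langle\B(\mathbf{q}_n,\u),\cdot\rangle$ and, using the Taylor/Lipschitz-type estimates of Subsection~\ref{sub2.4} for $\mathcal{C}'(\u)$, in the absorption term $\beta\mathcal{C}'(\u)\mathbf{q}_n$. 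Undoing the reflection identifies the limit as a weak solution $\p$ of \eqref{adj} with the asserted regularity.

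Finally, uniqueness is immediate from linearity: the difference of two solutions solves \eqref{adj} with $\h=\mathbf{0}$ and $\p_T=\mathbf{0}$, so \eqref{40} forces it to vanish. For temporal continuity and the energy equality I would distinguish cases. When $r=1$ one has $\partial_t\p\in\mathrm{L}^2(0,T;\V')$ and $\p\in\mathrm{L}^2(0,T;\V)$, whence $\p\in\C([0,T];\H)$ and the energy equality follow directly from the Lions--Magenes lemma (Theorem~3, Section~5.9.2, \cite{evans}). When $r=2,3$ one only controls $\partial_t\p\in\mathrm{L}^{4/3}(0,T;\V')$, which via Theorem~2, Section~5.9.2, \cite{evans} and Proposition~1.7.1, \cite{PCAM} gives $\p\in\C([0,T];\V')\cap\C_w([0,T];\H)$. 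The main obstacle is precisely the energy equality in these cases, since the pairing $\langle\partial_t\p,\p\rangle$ is not legitimated by the standard lemma; I would resolve it exactly as in Theorem~4.1, \cite{GGP}, whose hypotheses are satisfied because $\p\in\mathrm{L}^4(0,T;\wi\L^4)$, thereby upgrading weak continuity to strong continuity in $\H$ and justifying the stated energy equality.
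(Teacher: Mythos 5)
Your proposal is correct and takes essentially the same route as the paper: the paper likewise derives the a priori bounds \eqref{37}--\eqref{40} and the time-derivative estimates before the theorem, then invokes a Faedo--Galerkin approximation, uses linearity for uniqueness, and relies on the argument of Theorem 4.1 of \cite{GGP} (valid since $\p\in\mathrm{L}^4(0,T;\wi\L^4)$) for the energy equality and continuity in $\H$ when $r=2,3$. Your explicit time reversal $\mathbf{q}(t)=\p(T-t)$ is merely a presentational device for running the Galerkin scheme forward in time and does not change the substance of the argument.
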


\subsection{Existence of an optimal control}

Our next aim is to show that an optimal pair $(\u^*,\U^*)$ exists for the problem \eqref{control problem}.
\begin{theorem}[Existence of an optimal pair]\label{optimal}
	Let  $\mathbf{u}_0\in\H$ and  $\f\in\mathrm{L}^2(0,T;\V')$ be given.  Then there exists at least one pair $(\u^*,\U^*)\in\mathscr{A}_{\mathrm{ad}}$  such that the functional $ \mathscr{J}(\u,\U)$ attains its minimum at $(\u^*,\U^*)$, where $\u^*$ is the unique weak solution of the system  \eqref{2.18}  with the control $\U^*\in\mathscr{U}_{\mathrm{ad}}$.
\end{theorem}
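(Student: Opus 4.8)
The plan is to use the direct method of the calculus of variations. First I would note that $\mathscr{J}$ is bounded below: by the coercivity condition \eqref{2a19} in Hypothesis \ref{hyp}, every admissible pair satisfies $\mathscr{J}(\u,\U)\geq\kappa_1\int_0^T\|\U(t)\|_{\mathbb{U}}^2\d t+\kappa_2 T\geq\kappa_2 T$, so the infimum $m:=\inf_{(\u,\U)\in\mathscr{A}_{\mathrm{ad}}}\mathscr{J}(\u,\U)$ is finite (and $\mathscr{A}_{\mathrm{ad}}\neq\emptyset$ by Theorem \ref{main2}). I would then pick a minimizing sequence $(\u^n,\U^n)\in\mathscr{A}_{\mathrm{ad}}$ with $\mathscr{J}(\u^n,\U^n)\To m$. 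Since $\{\mathscr{J}(\u^n,\U^n)\}$ is bounded, the coercivity of $h$ again forces $\{\U^n\}$ to be bounded in $\mathrm{L}^2(0,T;\mathbb{U})$; feeding this back into the energy estimate \eqref{p36} yields a uniform bound on $\{\u^n\}$ in $\mathrm{L}^\infty(0,T;\H)\cap\mathrm{L}^2(0,T;\V)\cap\mathrm{L}^{r+1}(0,T;\wi\L^{r+1})$. Using the state equation \eqref{2.18} together with the estimates on $\B$ and $\mathcal{C}$ from Section \ref{sec2}, I would also bound $\{\partial_t\u^n\}$ in $\mathrm{L}^p(0,T;\V')$, with $p=2$ for $r=1,2$ and $p=4/3$ for $r=3$.

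Next I would extract (along a subsequence, not relabeled) weak limits: $\U^n\rightharpoonup\U^*$ in $\mathrm{L}^2(0,T;\mathbb{U})$, $\u^n\rightharpoonup\u^*$ in $\mathrm{L}^2(0,T;\V)$ and weak-$*$ in $\mathrm{L}^\infty(0,T;\H)$, and $\partial_t\u^n\rightharpoonup\partial_t\u^*$ in $\mathrm{L}^p(0,T;\V')$. The crucial compactness input is the Aubin--Lions--Simon lemma (as already invoked for the linearized system in Section \ref{sec2}): from the bound on $\{\u^n\}$ in $\mathrm{L}^2(0,T;\V)$, the bound on $\{\partial_t\u^n\}$ in $\mathrm{L}^p(0,T;\V')$, and the compact embedding $\V\subset\H$, one obtains strong convergence $\u^n\To\u^*$ in $\mathrm{L}^2(0,T;\H)$, and hence, along a further subsequence, $\u^n(t,x)\To\u^*(t,x)$ for a.e. $(t,x)$.

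The main obstacle is passing to the limit in the two nonlinear terms of \eqref{2.18}. For the convective term, testing against $\v\in\mathrm{L}^\infty(0,T;\V)$ and writing $\langle\B(\u^n),\v\rangle=-b(\u^n,\v,\u^n)$ via \eqref{b0}, the combination of strong $\mathrm{L}^2(0,T;\H)$ convergence (upgraded by interpolation with the $\V$-bound to $\mathrm{L}^2(0,T;\wi\L^4)$ convergence) and weak $\mathrm{L}^2(0,T;\V)$ convergence lets me pass to the limit in the bilinear expression in the standard way. For the Forchheimer term $\mathcal{C}(\u^n)=\mathcal{P}(|\u^n|^{r-1}\u^n)$, the bound in $\mathrm{L}^{r+1}(0,T;\wi\L^{r+1})$ gives that $\{\mathcal{C}(\u^n)\}$ is bounded in $\mathrm{L}^{(r+1)/r}(0,T;\wi\L^{(r+1)/r})$, hence converges weakly to some $\chi$; the a.e. convergence of $\u^n$ together with the continuity of $\z\mapsto|\z|^{r-1}\z$ and a standard weak-convergence lemma (a.e. convergence plus a uniform $\mathrm{L}^{(r+1)/r}$ bound identifies the weak limit with the pointwise one) gives $\chi=\mathcal{C}(\u^*)$; alternatively one may invoke the local monotonicity of Theorem \ref{thm2.2} together with a Minty--Browder argument. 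The linear terms and the control term $\D\U^n\rightharpoonup\D\U^*$ pass trivially since $\D$ is bounded linear. Consequently $\u^*$ is the unique weak solution of \eqref{2.18} driven by $\U^*$, so $(\u^*,\U^*)\in\mathscr{A}_{\mathrm{ad}}$.

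Finally I would establish weak lower semicontinuity of $\mathscr{J}$ along the sequence. The tracking term $\frac12\int_0^T\|\u^n-\u_d\|_{\H}^2\d t$ converges by strong $\mathrm{L}^2(0,T;\H)$ convergence; the enstrophy term $\frac12\int_0^T\|\nabla\u\|_{\H}^2\d t$ is convex and continuous on $\mathrm{L}^2(0,T;\V)$, hence weakly lower semicontinuous; the control cost $\int_0^T h(\U)\d t$ is weakly lower semicontinuous on $\mathrm{L}^2(0,T;\mathbb{U})$ because $h$ is convex and lower semicontinuous (Hypothesis \ref{hyp}). For the terminal term I would verify $\u^n(T)\rightharpoonup\u^*(T)$ in $\H$ (using weak continuity in $\H$ together with the $\mathrm{L}^p(0,T;\V')$ bound on the time derivatives, e.g. via an Arzel\`a--Ascoli argument for $t\mapsto(\u^n(t),\v)$ with $\v$ in a dense subset of $\H$) and apply weak lower semicontinuity of the $\H$-norm. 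Combining these estimates gives $\mathscr{J}(\u^*,\U^*)\leq\liminf_{n\to\infty}\mathscr{J}(\u^n,\U^n)=m$, and since $(\u^*,\U^*)\in\mathscr{A}_{\mathrm{ad}}$ forces $\mathscr{J}(\u^*,\U^*)\geq m$, equality holds and $(\u^*,\U^*)$ is the desired optimal pair.
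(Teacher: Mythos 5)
Your proposal is correct and follows essentially the same route as the paper's proof: a minimizing sequence, uniform bounds on the controls from coercivity of $h$ and on the states from the energy estimate \eqref{p36}, time-derivative bounds, Banach--Alaoglu plus Aubin--Lions compactness to pass to the limit in the state equation, and weak lower semicontinuity of $\mathscr{J}$ to conclude. The only difference is one of detail rather than of method: you spell out the identification of the limits of $\B(\u^n)$ and $\mathcal{C}(\u^n)$ (via the antisymmetry of $b$, the $\mathrm{L}^{(r+1)/r}$ bound with a.e.\ convergence, or Minty--Browder) and the convergence $\u^n(T)\rightharpoonup\u^*(T)$ needed for the terminal term, steps the paper compresses into ``one can pass to the limit'' and an appeal to convexity and continuity of the cost functional.
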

\begin{proof}
	Let us first define $\mathscr{J} := \inf \limits _{\U \in \mathscr{U}_{\mathrm{ad}}}\mathscr{J}(\u,\U).$
	Since, $0\leq \mathscr{J} < +\infty$, there exists a minimizing sequence $\{\U_n\} \in \mathscr{U}_{\mathrm{ad}}$ such that $\lim\limits_{n\to\infty}\mathscr{J}(\u_n,\U_n) = \mathscr{J},$ where $\u_n$ is the unique weak solution of the system \eqref{2.18} with the control $\U_n$ and also the initial data  
$
	\u_n(0)=	\u_0 \in\H.
$
	Therefore, we have 	\begin{align}\label{38}
	\mathscr{J}\leq\mathscr{J}(\u_n,\U_n)\leq \mathscr{J}+\frac{1}{n},
\end{align}
	where 
	\begin{equation}
	\left\{
	\begin{aligned}
	\partial_t\u_n(t)+\mu\A\u_n(t)+\B(\u_n(t))+\alpha\u_n(t)+\beta\mathcal{C}(\u_n(t))&=\f(t)+\D\U_n(t),\ \text{ in } \ \V', \\
	\u_n(0)&=\u_0,
	\end{aligned}
	\right.
	\end{equation}
for a.e. $t\in[0,T]$.	From the condition \eqref{38},  we can find a large constant $\widetilde{C}>0,$ such that
	$$ \int_0^T \|\U_n(t)\|^2_{\mathbb{U}} \d t \leq  \widetilde{C} < +\infty .$$
	That is, the sequence $\{\U_n\}$ is uniformly bounded in the space $\mathrm{L}^2(0,T;\mathbb{U})$. Since $\u_n$ is the unique weak solution of the system \eqref{2.18} with control the $\U_n$, from the energy estimate \eqref{p36}, we have 
	\begin{align*}
	&\|\u_n(t)\|_{\H}^2+\mu\int_0^t\|\u_n(s)\|_{\V}^2\d s+2\alpha\int_0^t\|\u_n(s)\|_{\H}^2\d s+2\beta\int_0^t\|\u_n(s)\|_{\wi\L^{r+1}}^{r+1}\d s \nonumber\\&\leq\|\u_0\|_{\H}^2+\frac{2}{\mu}\int_0^t\|\f(s)\|_{\V'}^2\d s+\frac{2\widetilde{C}}{\mu}\|\D\|_{\mathcal{L}(\mathbb{U},\V')}^2,
	\end{align*}
	for all $t\in[0,T]$.	It can be easily seen  that the sequence $\{\u_n\} $ is uniformly bounded in $\mathrm{L}^{\infty}(0,T;\H)\cap \mathrm{L}^2(0,T;\V)\cap\mathrm{L}^{r+1}(0,T;\wi\L^{r+1})$. For $r=2$ and $\v\in\mathrm{L}^2(0,T;\V)$, we find 
	{
	\begin{align*}
	&	\int_0^T|\langle\partial_t\u_n(t),\v(t)\rangle|\d t\nonumber\\&\leq\mu\int_0^T|\langle\A\u_n(t),\v(t)\rangle|\d t+\int_0^T|\langle\B(\u_n(t)),\v(t)\rangle|\d t+\alpha\int_0^T|(\u_n(t),\v(t))|\d t\nonumber\\&\quad+\beta\int_0^T|\langle\mathcal{C}(\u_n(t)),\v(t)\rangle|\d t+\int_0^T|\langle\f(t),\v(t)\rangle|\d t+\int_0^T|\langle\D\U_{n_k}(t),\v(t)\rangle|\d t\nonumber\\&\leq\left\{\left(\mu+\frac{\alpha}{\sqrt{\lambda_1}}\right)\left(\int_0^T\|\u_n(t)\|_{\V}^2\d t\right)^{1/2}+\left(1+\frac{\beta}{\sqrt{\lambda_1}}\right)\left(\int_0^T\|\u_n(t)\|_{\wi\L^4}^4\d t\right)^{1/2}\right.\nonumber\\&\qquad+\left.\left(\int_0^T\|\f(t)\|_{\V'}\d t\right)^{1/2}+\|\D\|_{\mathcal{L}(\mathbb{U};\V')}\left(\int_0^T\|\U_{n_k}(t)\|_{\mathbb{U}}^2\d t\right)^{1/2}\right\}\left(\int_0^T\|\v(t)\|_{\V}^2\d t\right)^{1/2}. 
	\end{align*}}
	For $r=3$ and $\v\in\mathrm{L}^4(0,T;\wi\L^4),$ we estimate 
		{
	\begin{align*}
	&	\int_0^T|\langle\mathcal{C}(\u_n(t)),\v(t)\rangle|\d t\leq \int_0^T\|\u_n(t)\|_{\wi\L^4}^3\|\v(t)\|_{\wi\L^4}\d t\nonumber\\&\leq\left(\int_0^T\|\u_n(t)\|_{\wi\L^4}^4\d t\right)^{3/4}\left(\int_0^T\|\v(t)\|_{\wi\L^4}^4\d t\right)^{1/4},
	\end{align*}}
	so that $\mathcal{C}(\u_n)\in\mathrm{L}^{\frac{4}{3}}(0,T;\wi\L^{\frac{4}{3}})$. Following similar calculations as in the case of $r=2$, we obtain $\partial_t\u_n\in\mathrm{L}^{\frac{4}{3}}(0,T;\V')$ in the case of $r=3$.
	Hence, by using the Banach-Alaglou theorem, we can extract a subsequence $\{(\u_{n_k}, \U_{n_k})\}$ such that 
	\begin{equation}\label{conv}
	\left\{
	\begin{aligned} 
	\u_{n_k}&\xrightharpoonup{w^*}\u^*\ \text{ in } \ \mathrm{L}^{\infty}(0,T;\H), \ \u_{n_k} \rightharpoonup \u^*\ \text{ in  }\ \textrm{L}^2(0,T;\V),\\ 
\u_{n_k} &\rightharpoonup \u^*\ \text{ in  }\ \textrm{L}^{r+1}(0,T;\wi\L^{r+1}),\ \U_{n_k} \rightharpoonup  \U^*\ \text{ in  }\ \mathrm{L}^2(0,T;\mathbb{U}),\\
\partial_t\u_{n_k}&\rightharpoonup\partial_t\u\ \text{ in }\ \mathrm{L}^p(0,T;\V'),
	\end{aligned}
	\right.
	\end{equation}
as $k\to\infty$, where $p=2$ for $r=2$ and $p=3$ for $r=3$. 	Using the Aubin-Lion compactness Theorem   and the convergence in \eqref{conv}, we infer that 
	\begin{equation}
	\begin{aligned}
	\u_{n_k}&\to \u^*\text{ in }\mathrm{L}^{p}(0,T;\H), 
	\end{aligned}
	\end{equation}
	as $k\to\infty$ and along a subsequence, we obtain $	\u_{n_k}\to \u^*$ for a.e. $(t,x)\in[0,T]\times\Omega$. From the above convergences, one can pass limit in the equation corresponding to $(\u_{n_k},\U_{n_k})\in \mathscr{A}_{\mathrm{ad}}$ and conclude that  $\u^*(\cdot)$ is the unique weak solution of  \eqref{2.18} with the control $\U^*\in\mathrm{L}^2(0,T;\mathbb{U})$. For $r=2$, it is a consequence of the Theorem 3, section 5.9.2, \cite{evans} that $\u^* \in \mathrm{C}([0,T];\H) $. For the case of $r=3$, one can follow the same arguments as in \cite{GGP} to obtain $\u^* \in \mathrm{C}([0,T];\H) $.  It should be noted that the initial condition $
	\u_n(0)=	\u_0 \in\H,
	$ and the right continuity in time at $0$ gives 
$
	\u^*(0)=	\u_0\in\H.
$
	Furthermore, $\u^*(\cdot)$ satisfies the following energy equality:
	\begin{align}
&	\|\u^*(t)\|^2_{\H}+2\mu \int_0^t\|\u^*(s)\|^2_{\V}\d s+2\alpha\int_0^t\|\u^*(s)\|_{\H}^2\d s+2\beta\int_0^t\|\u^*(s)\|_{\widetilde{\L}^{r+1}}^{r+1}\d s\nonumber\\&=\|\u_0\|_{\H}^2+2\int_0^t\langle\f(s),\u^*(s)\rangle\d s+\int_0^t\langle\D\U^*(s),\u^*(s)\rangle\d s,
	\end{align}
	for all $t\in[0,T]$. 
	Since, $\u^*$ is the unique weak solution of  \eqref{2.18} with the control $\U^*\in\mathrm{L}^2(0,T;\mathbb{U})$, the whole sequence $\u_n$ converges to $\u^*$. This easily gives  $(\u^*,\U^*)\in \mathscr{A}_{\mathrm{ad}}$. 
	
	Finally, we show that  $(\u^*,\U^*)$ is a minimizer, that is \emph{$\mathscr{J}=\mathscr{J}(\u^*,\U^*)$}.  Since the cost functional $\mathscr{J}(\cdot,\cdot)$ is continuous and convex (see Proposition III.1.6 and III.1.10,  \cite{EI})  on $\mathrm{L}^2(0,T;\H)  \times \mathscr{U}_{\mathrm{ad}}$, it follows that $\mathscr{J}(\cdot,\cdot)$ is weakly lower semi-continuous (Proposition II.4.5, \cite{EI}). That is, for a sequence 
	$(\u_n,\U_n)\xrightharpoonup{w}(\u^*,\U^*)\ \text{ in }\ \mathrm{L}^2(0,T;\H) \times \mathrm{L}^2(0,T;\mathbb{U}),$
	we have 
$
	\mathscr{J}(\u^*,\U^*) \leq  \liminf \limits _{n\rightarrow \infty} \mathscr{J}(\u_n,\U_n).
$
	Therefore, we obtain 
	\begin{align*}\mathscr{J} \leq \mathscr{J}(\u^*,\U^*) \leq  \liminf \limits _{n\rightarrow \infty} \mathscr{J}(\u_n,\U_n)=  \lim \limits _{n\rightarrow \infty} \mathscr{J}(\u_n,\U_n) = \mathscr{J},\end{align*}
	and hence $(\u^*,\U^*)$ is a optimizer of the problem \eqref{control problem}.
\end{proof}

\section{First Order Necessary Conditions of Optimality}\label{sec4}\setcounter{equation}{0}
In this section, we prove the first order necessary condition of optimality for the optimal control problem \eqref{control problem}. We characterize optimal control in terms of the adjoint variable. In this section, we assume the following hypothesis on $h(\cdot)$. The function $h:\mathbb{U}\to\R$ is convex, lower semi-continuous and there exist positive constants $\varkappa_1$ and $\varkappa_2$ such that: \begin{align}\label{3.1}h(\U)\leq \varkappa_1\|\U\|_{\mathbb{U}}^2+\varkappa_2, \ \text{ for all } \ \u\in\mathbb{U}. \end{align}

We first provide Pontryagin's  maximum principle in terms of the \emph{Hamiltonian formulation}. Here we are providing  formal calculations only. Let us define the \emph{Lagrangian} by
$$\mathscr{L}(\u,\U) :=\frac{1}{2}\|\u-\u_d\|_{\H}^2 +\frac{1}{2}\|\nabla\u\|_{\H}^2+ h(\U), $$ so that $\mathscr{J}(\u,\U)=\int_0^T\mathscr{L}(\u(t),\U(t))\d t+\frac{1}{2}\|\u(T)-\u^f\|_{\H}^2$. 
We define the corresponding \emph{Hamiltonian} by
$$\mathscr{H}(\u,\U,\p):= \mathscr{L}(\u,\U) + \langle\p, \mathrm{N}(\u,\U) \rangle,$$ where $\mathrm{N}(\u,\U)=-\mu\A\u-\B(\u)-\alpha\u-\beta\mathcal{C}(\u)+\f+\U$ and the adjoint variable $\p(\cdot)$ satisfies system \eqref{adjp} given below. Since $(\u^*,\U^*)$ is an optimal pair for the problem \eqref{control problem}, Pontryagin's minimum principle  gives
\begin{eqnarray}
\mathscr{H}(\u^*(t),\U^*(t),\p(t)) \leq \mathscr{H}(\u^*(t),\mathrm{W},\p(t)),
\end{eqnarray}
for all $\mathrm{W} \in \mathbb{U}$ and a.e. $t\in[0,T]$. That is, the following minimum principle is satisfied by an optimal pair $(\u^*,\U^*)\in\mathscr{A}_{\mathrm{ad}}$ obtained in Theorem \ref{optimal}:
\begin{equation}\label{pm}
h(\U^*(t))+(\p(t),\D\U^*(t) ) \leq h(\mathrm{W})+( \p(t),\D\mathrm{W} ),
\end{equation} 
for all  $\mathrm{W}\in \mathbb{U},$ and a.e. $t\in[0,T]$. For $\mathscr{U}_{\mathrm{ad}}=\mathrm{L}^2(0,T;\mathbb{U})$, from \eqref{pm}, we see that $-\D^*\p \in \partial h(\U^*(t))\in \V', \text{ a.e. } t \in [0,T]$, where $\partial h(\cdot)$ denotes the subdifferential of $h(\cdot)$.
Since Pontryagin's maximum principle provides the first order necessary conditions of optimality, we show the final conclusions in the next theorem.  We follow similar techniques as in the works \cite{FART,BDM,MTM5,sritharan}, etc to get our main result.   The main Theorem of this section is:
\begin{theorem}\label{main}
	Let $(\u^*,\U^*)\in\mathscr{A}_{\mathrm{ad}}$ be an optimal solution of the problem \eqref{control problem} obtained in  Theorem \ref{optimal}. Then, there exists \emph{a unique weak solution} $\p\in\mathrm{C}([0,T];\H)\cap\mathrm{L}^2(0,T;\V)$ of the adjoint system:
	\begin{eqnarray}\label{adjp}
	\left\{
	\begin{aligned}
	-\partial_t\p(t)&+\mu\A\p(t)+(\B'(\u^*(t)))^*\p (t)+\alpha\p(t)+\beta\mathcal{C}'(\u^*(t))\p(t)\\&=(\u^*(t)-\u_d(t))-\Delta\u^*(t),\ \text{ in }\ \V', \\ \p(T)&=\u^*(T)-\u^f,
	\end{aligned}
	\right.
	\end{eqnarray} 
a.e. $t\in[0,T]$,  	such that
\begin{equation}\label{3.41}
-\D^*\p(t) \in \partial h(\U^*(t)), \ \text{ a.e. }\  t \in [0,T]. 
\end{equation}
\end{theorem}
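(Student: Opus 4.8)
The plan is to proceed in four steps: first solve the adjoint system, then differentiate the state with respect to the control, next exploit optimality through a duality pairing between the linearized and adjoint systems, and finally extract the pointwise subdifferential inclusion \eqref{3.41}.

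First I would observe that \eqref{adjp} is the special case of the adjoint system \eqref{adj} with forcing $\h=(\u^*-\u_d)-\Delta\u^*$ and terminal datum $\p_T=\u^*(T)-\u^f$. Since $\u^*\in\mathrm{C}([0,T];\H)\cap\mathrm{L}^2(0,T;\V)$ by Theorem \ref{optimal}, we have $\u^*-\u_d\in\mathrm{L}^2(0,T;\H)\hookrightarrow\mathrm{L}^2(0,T;\V')$ and $-\Delta\u^*=\A\u^*\in\mathrm{L}^2(0,T;\V')$ by \eqref{2.7a}, so $\h\in\mathrm{L}^2(0,T;\V')$, while $\p_T\in\H$. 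Theorem \ref{thm3.4} then delivers the unique weak solution $\p\in\mathrm{C}([0,T];\H)\cap\mathrm{L}^2(0,T;\V)$.

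Second, and this is the main technical step, I would establish Gateaux differentiability of the control-to-state map. Fix $\W\in\mathscr{U}_{\mathrm{ad}}$, and for $\theta\in(0,1)$ let $\u^\theta$ be the weak solution of \eqref{2.18} with control $\U^*+\theta(\W-\U^*)$ and the same initial datum $\u_0$. Setting $\mathfrak{w}^\theta=\theta^{-1}(\u^\theta-\u^*)$, subtracting the two state equations and dividing by $\theta$ produces an equation for $\mathfrak{w}^\theta$ with $\mathfrak{w}^\theta(0)=\mathbf{0}$. Using the energy estimate \eqref{p36}, the local monotonicity of Theorem \ref{thm2.2}, and the Lipschitz-type bounds for $\B$ and $\mathcal{C}$, I would derive uniform-in-$\theta$ bounds on $\mathfrak{w}^\theta$ in $\mathrm{L}^\infty(0,T;\H)\cap\mathrm{L}^2(0,T;\V)$, show $\u^\theta\to\u^*$, and pass to the limit in the difference quotients of the nonlinear terms, concluding that $\mathfrak{w}^\theta\to\mathfrak{w}$, where $\mathfrak{w}$ is the weak solution of the linearized system \eqref{4} with $\g=\D(\W-\U^*)$ and $\mathfrak{w}(0)=\mathbf{0}$ provided by Theorem \ref{linear}. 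The hard part lies precisely here: controlling the quotient of $\mathcal{C}(\u^\theta)-\mathcal{C}(\u^*)$ for the cubic case $r=3$, where one leans on the monotonicity inequality \eqref{2.23}, the bound \eqref{a215}, and the compactness/convergence arguments already assembled in the proof of Theorem \ref{optimal}.

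Third, I would invoke optimality. Since $(\u^*,\U^*)$ minimizes $\mathscr{J}$ over $\mathscr{A}_{\mathrm{ad}}$, we have $\theta^{-1}[\mathscr{J}(\u^\theta,\U^*+\theta(\W-\U^*))-\mathscr{J}(\u^*,\U^*)]\geq 0$. Differentiating the quadratic tracking, enstrophy, and terminal terms in $\u$ yields, as $\theta\downarrow 0$, the directional derivative $\int_0^T(\u^*-\u_d,\mathfrak{w})\d t+\int_0^T(\nabla\u^*,\nabla\mathfrak{w})\d t+(\u^*(T)-\u^f,\mathfrak{w}(T))$, while convexity of $h$ supplies the upper bound $\theta^{-1}\int_0^T[h(\U^*+\theta(\W-\U^*))-h(\U^*)]\d t\leq\int_0^T[h(\W)-h(\U^*)]\d t$. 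I would then compute $\frac{\d}{\d t}(\p,\mathfrak{w})$ from \eqref{adjp} and \eqref{4}: the self-adjointness of $\A$, the defining relation $\langle(\B'(\u^*))^*\p,\mathfrak{w}\rangle=\langle\p,\B'(\u^*)\mathfrak{w}\rangle$, and the symmetry of $\langle\mathcal{C}'(\u^*)\cdot,\cdot\rangle$ apparent from \eqref{29} and \eqref{2.9} cancel every coupling term. Integrating over $[0,T]$ and using $\mathfrak{w}(0)=\mathbf{0}$, $\p(T)=\u^*(T)-\u^f$, and $\langle-\Delta\u^*,\mathfrak{w}\rangle=(\nabla\u^*,\nabla\mathfrak{w})$ gives the identity $\int_0^T(\u^*-\u_d,\mathfrak{w})\d t+\int_0^T(\nabla\u^*,\nabla\mathfrak{w})\d t+(\u^*(T)-\u^f,\mathfrak{w}(T))=\int_0^T\langle\p,\D(\W-\U^*)\rangle\d t$, identifying the directional derivative with an integral against $\p$.

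Finally, combining these estimates yields $0\leq\int_0^T[(\D^*\p,\W-\U^*)_{\mathbb{U}}+h(\W)-h(\U^*)]\d t$ for every $\W\in\mathscr{U}_{\mathrm{ad}}=\mathrm{L}^2(0,T;\mathbb{U})$. As the integrand decouples across time, this is equivalent to $\U^*(t)$ minimizing $\W\mapsto(\D^*\p(t),\W)_{\mathbb{U}}+h(\W)$ over $\mathbb{U}$ for a.e. $t$, and the convex subdifferential characterization of this pointwise minimizer gives $0\in\D^*\p(t)+\partial h(\U^*(t))$, that is $-\D^*\p(t)\in\partial h(\U^*(t))$ a.e. $t\in[0,T]$, which is precisely \eqref{3.41}.
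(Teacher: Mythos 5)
Your proposal is correct and follows essentially the same route as the paper: solvability of \eqref{adjp} via Theorem \ref{thm3.4}, Gateaux differentiability of the control-to-state map with the linearized system \eqref{4a} as derivative (the paper's Lemmas \ref{lem3.7} and \ref{lem3.8}), a duality/integration-by-parts identity pairing $\p$ with $\mathfrak{w}$, and the convex-analysis conclusion \eqref{3.41}. The only differences are cosmetic: you perturb along $\U^*+\theta(\W-\U^*)$ and bound the $h$-quotient by convexity, where the paper perturbs along $\U^*+\tau\U$ and works with the directional derivative $h'(\U^*,\U)$.
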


Before embarking to the proof, we prove two important Lemmas, which is useful in establishing the above Theorem. From \eqref{40}, we know that 
\begin{align}
&\sup_{t\in[0,T]}\|\p(t)\|_{\H}^2+\mu\int_0^T\|\p(t)\|_{\V}^2\d t\nonumber\\& \leq  \left(\|\u^*(T)-\u^f\|_{\H}^2+\frac{4}{\mu} \int_0^T\|\u^*(t)-\u_d(t)\|_{\V'}^2\d t+\frac{4}{\mu}\int_0^T\|\u^*(t)\|_{\V}^2\d t\right)e^{\frac{8}{\mu^2}K_T}=:M_T,
\end{align} 
where 
	\begin{align}\label{4p7}
K_T=\|\u_0\|_{\H}^2+\frac{2}{\mu}\int_0^T\|\f(t)\|_{\V'}^2\d t+\frac{2}{\mu}\|\D\|_{\mathcal{L}(\mathbb{U},\V')}^2\int_0^T\|\U^*(t)\|_{\mathbb{U}}^2\d t.
\end{align}
\begin{lemma}\label{lem3.7}
	Let $(\u^*,\U^*)\in\mathscr{A}_{\mathrm{ad}}$ be an optimal pair for the control problem \eqref{control problem} obtained in  Theorem \ref{optimal}. Let $\u_{\U^*+\tau\U}$ be the unique weak solution to the system \eqref{2.18} with the control $\U^*+\tau\U$, for sufficiently small $\tau$ and $\U\in\mathscr{U}_{\mathrm{ad}}$. Then, we have 
	\begin{align}\label{325a}
	&\sup_{t\in[0,T]}\|\u_{\U^*+\tau\U}(t)-\u_{\U^*}(t)\|_{\H}^2+\mu\int_0^T\|\u_{\U^*+\tau\U}(t)-\u_{\U^*}(t)\|_{\V}^2\d t\nonumber\\&\quad+2\alpha\int_0^T\|\u_{\U^*+\tau\U}(t)-\u_{\U^*}(t)\|_{\H}^2\d t+\frac{\beta}{2^{r-2}}\int_0^T\|\u_{\U^*+\tau\U}(t)-\u_{\U^*}(t)\|_{\wi\L^{r+1}}^{r+1}\d t\nonumber\\&\leq  \left\{\frac{2\tau^2}{\mu}\|\D\|_{\mathcal{L}(\mathbb{U},\V')}^2\int_0^T\|\U(t)\|_{\mathbb{U}}^2\d t\right\}e^{\frac{8K_T}{\mu^2}},
	\end{align}
	where $K_T$ id defined in \eqref{4p7}. 
\end{lemma}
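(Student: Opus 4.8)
The plan is to set $\y := \u_{\U^*+\tau\U}-\u_{\U^*}$ and derive a differential inequality for $\|\y(t)\|_{\H}^2$, then integrate and close via Gronwall's inequality. Subtracting the two copies of \eqref{2.18} satisfied by $\u_{\U^*+\tau\U}$ and $\u_{\U^*}$, the difference $\y$ solves, in $\V'$ and a.e.\ in $[0,T]$,
\[
\partial_t\y+\mu\A\y+\big(\B(\u_{\U^*+\tau\U})-\B(\u_{\U^*})\big)+\alpha\y+\beta\big(\mathcal{C}(\u_{\U^*+\tau\U})-\mathcal{C}(\u_{\U^*})\big)=\tau\D\U,
\]
with $\y(0)=\mathbf{0}$. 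Both states belong to $\C([0,T];\H)\cap\L^2(0,T;\V)$ by Theorem \ref{main2} and satisfy the energy equality, so pairing the above with $\y$ is legitimate (for $r=3$ this is justified exactly as for the linearized system \eqref{4}, through the GGP-type argument recalled there). This yields
\[
\frac{1}{2}\frac{\d}{\d t}\|\y\|_{\H}^2+\mu\|\y\|_{\V}^2+\alpha\|\y\|_{\H}^2+\beta\big\langle\mathcal{C}(\u_{\U^*+\tau\U})-\mathcal{C}(\u_{\U^*}),\y\big\rangle=-\big\langle\B(\u_{\U^*+\tau\U})-\B(\u_{\U^*}),\y\big\rangle+\tau\langle\D\U,\y\rangle.
\]

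Next I would estimate the three remaining terms. For the convective term, \eqref{2p8} (which already identifies $\langle\B(\u_{\U^*+\tau\U})-\B(\u_{\U^*}),\y\rangle$ with $\langle\B(\y,\u_{\U^*}),\y\rangle$), followed by Ladyzhenskaya's inequality from \eqref{2.9a} and Young's inequality, gives $|\langle\B(\u_{\U^*+\tau\U})-\B(\u_{\U^*}),\y\rangle|\leq\|\u_{\U^*}\|_{\V}\|\y\|_{\wi\L^4}^2\leq\sqrt2\|\u_{\U^*}\|_{\V}\|\y\|_{\H}\|\y\|_{\V}\leq\frac{\mu}{4}\|\y\|_{\V}^2+\frac{2}{\mu}\|\u_{\U^*}\|_{\V}^2\|\y\|_{\H}^2$. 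The absorption term is nonnegative and is kept on the left: by the monotonicity estimate \eqref{2.23} and then \eqref{a215} it is bounded below by $\frac{\beta}{2^{r-1}}\|\y\|_{\wi\L^{r+1}}^{r+1}$. Finally, the control term is handled by $\tau|\langle\D\U,\y\rangle|\leq\frac{\mu}{4}\|\y\|_{\V}^2+\frac{\tau^2}{\mu}\|\D\|_{\mathcal{L}(\mathbb{U},\V')}^2\|\U\|_{\mathbb{U}}^2$. Collecting these bounds, multiplying by $2$, and integrating on $(0,t)$ with $\y(0)=\mathbf{0}$ produces
\[
\|\y(t)\|_{\H}^2+\mu\int_0^t\|\y\|_{\V}^2\d s+2\alpha\int_0^t\|\y\|_{\H}^2\d s+\frac{\beta}{2^{r-2}}\int_0^t\|\y\|_{\wi\L^{r+1}}^{r+1}\d s\leq\frac{2\tau^2}{\mu}\|\D\|_{\mathcal{L}(\mathbb{U},\V')}^2\int_0^T\|\U\|_{\mathbb{U}}^2\d s+\frac{4}{\mu}\int_0^t\|\u_{\U^*}\|_{\V}^2\|\y\|_{\H}^2\d s.
\]

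To close, I would first discard all but the $\|\y(t)\|_{\H}^2$ term on the left and apply Gronwall's inequality, obtaining the pointwise bound $\|\y(t)\|_{\H}^2\leq\big(\frac{2\tau^2}{\mu}\|\D\|^2\int_0^T\|\U\|_{\mathbb{U}}^2\big)\exp\big(\frac{4}{\mu}\int_0^T\|\u_{\U^*}\|_{\V}^2\big)$; the energy estimate \eqref{p36} for $\u_{\U^*}$ gives $\mu\int_0^T\|\u_{\U^*}\|_{\V}^2\leq K_T$, so the exponent is at most $\frac{4K_T}{\mu^2}$. Substituting this pointwise bound back into the last integral on the right of the displayed inequality and invoking the elementary estimate $1+Ee^{E}\leq e^{2E}$, exactly as in the passage from \eqref{2p19} to \eqref{2p20}, upgrades the exponent to $\frac{8K_T}{\mu^2}$ and yields \eqref{325a}. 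The main technical point to watch is the rigorous justification of the energy identity for $\y$ in the critical case $r=3$, where $\partial_t\y$ only lies in $\L^{4/3}(0,T;\V')$; this is handled by the same approximation/continuity argument used for \eqref{2.18} and \eqref{4}, and once it is in place the monotonicity of $\mathcal{C}$ (via \eqref{2.23}) ensures the nonlinear term contributes only favorable signs.
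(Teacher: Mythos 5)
Your proposal is correct and follows essentially the same route as the paper: form the difference equation for $\u_{\U^*+\tau\U}-\u_{\U^*}$, test with the difference, estimate the convective term via \eqref{2p8} with Ladyzhenskaya and Young, keep the absorption term on the left via \eqref{2.23} and \eqref{a215}, absorb the control term, integrate, and apply Gronwall together with the energy bound \eqref{p36}. The only difference is cosmetic: the paper stops at the pointwise Gronwall bound \eqref{325} and leaves the back-substitution yielding the exponent $\frac{8K_T}{\mu^2}$ implicit, whereas you spell it out (exactly as in the paper's own passage from \eqref{2p19} to \eqref{2p20}).
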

\begin{proof}
	Since $\u_{\U^*+\tau \U}$ and $\u_{\U^*}$ are the unique weak solutions of the system \eqref{2.18} corresponding to the  controls $\U^*+\tau \U$ and $\U^*$, respectively, we know that $\widetilde{\u}=\u_{\U^*+\tau \U}-\u_{\U^*}$  satisfies the following system:
	\begin{equation}\label{11}
	\left\{
	\begin{aligned}
	\frac{\partial \widetilde{\mathbf{u}}(t)}{\partial t}+\mu\A \widetilde{\mathbf{u}}(t)+\B(\u_{\U^*+\tau \U}(t))-\B(\u_{\U^*}(t))&+\alpha\widetilde{\mathbf{u}}(t)\\+\beta(\mathcal{C}(\u_{\U^*+\tau \U}(t))-\mathcal{C}(\u_{\U^*}(t)))&=\tau \D\U(t),\ \text{ in }\ [0,T],\\
	\widetilde{\u}(0)&=\mathbf{0}.
	\end{aligned}
	\right.
	\end{equation}
	Taking the inner product with $\widetilde{\u}(\cdot)$ to the first equation in \eqref{11}, we obtain 
	\begin{align}\label{322}
&	\frac{1}{2}\frac{\d }{\d t}\|\widetilde{\u}(t)\|_{\H}^2+\mu\|\widetilde{\u}(t)\|_{\V}^2+\alpha\|\widetilde{\u}(t)\|_{\H}^2\nonumber\\&=-\langle\B(\u_{\U^*+\tau \U}(t))-\B(\u_{\U^*}(t)),\widetilde{\u}(t)\rangle-\beta\langle\mathcal{C}(\u_{\U^*+\tau \U}(t))-\mathcal{C}(\u_{\U^*}(t)),\widetilde{\u}(t)\rangle+\tau\langle\D\U(t),\widetilde{\u}(t)\rangle\nonumber\\&\leq\|\u_{\U^*}(t)\|_{\V}\|\widetilde{\u}(t)\|_{\wi\L^4}^2 -\frac{\beta}{2^{r-1}}\|\widetilde{\u}(t)\|_{\wi\L^{r+1}}^{r+1}+\tau\|\D\U(t)\|_{\V'}\|\wi\u(t)\|_{\V}\nonumber\\&\leq-\frac{\beta}{2^{r-1}}\|\widetilde{\u}(t)\|_{\wi\L^{r+1}}^{r+1}+\frac{\mu}{2}\|\wi\u(t)\|_{\V}^2+\frac{2}{\mu}\|\u_{\U^*}(t)\|_{\V}^2\|\wi\u(t)\|_{\H}^2+\frac{\tau^2}{\mu}\|\D\|_{\mathcal{L}(\mathbb{U},\V')}^2\|\U(t)\|_{\mathbb{U}}^2,
	\end{align}
where we used  \eqref{2p8} and \eqref{a215}.
	Integrating the above inequality from $0$ to $t$, we obtain 
	\begin{align}\label{324}
&\|\widetilde{\u}(t)\|_{\H}^2+\mu\int_0^t\|\widetilde{\u}(s)\|_{\V}^2\d s+2\alpha\int_0^t\|\widetilde{\u}(s)\|_{\H}^2\d s+\frac{\beta}{2^{r-2}}\int_0^t\|\widetilde{\u}(s)\|_{\wi\L^{r+1}}^{r+1}\d s\nonumber\\&\leq \frac{4}{\mu}\int_0^t\|\u_{\U^*}(s)\|_{\V}^2\|\wi\u(s)\|_{\H}^2\d s+\frac{2\tau^2}{\mu}\|\D\|_{\mathcal{L}(\mathbb{U},\V')}^2\int_0^t\|\U(s)\|_{\mathbb{U}}^2\d s,
	\end{align}
for all $t\in[0,T]$. 	An application of Gronwall's inequality in \eqref{324} yields 
	\begin{align}\label{325}
&\|\widetilde{\u}(t)\|_{\H}^2\leq \left\{\frac{2\tau^2}{\mu}\|\D\|_{\mathcal{L}(\mathbb{U},\V')}^2\int_0^T\|\U(t)\|_{\mathbb{U}}^2\d t\right\}\exp\left(\frac{4}{\mu}\int_0^T\|\u_{\U^*}(t)\|_{\V}^2\d t\right),
	\end{align}
	for all $t\in[0,T]$,  which completes the proof. 
\end{proof}

The following lemma gives the differentiability of the mapping $\U \mapsto \u_{\U}$  from $\mathscr{U}_{\mathrm{ad}}$ into   $\mathrm{C}([0,T];\H)\cap\mathrm{L}^{2}(0,T;\V)$.
\begin{lemma}\label{lem3.8}
	Let $\u_0\in\H$ and $\f\in\mathrm{L}^2(0,T;\V')$ be given. Then the mapping $\U^* \mapsto \u_{\U^*}$  from $\mathscr{U}_{\mathrm{ad}}$ into  $\mathrm{C}([0,T];\H)\cap\mathrm{L}^{2}(0,T;\V)$ is Gateaux differentiable. Moreover, we have 
\begin{equation}\label{lim}
\left\{
\begin{aligned}
\lim_{\tau\downarrow 0}\frac{\|\u_{\U^*+\tau \U} (t)- \u_{\U^*}(t)-\tau\mathfrak{w}(t)\|_{\H}}{\tau}&=0, \ \text{ for all }\ t\in[0,T],\\
\lim_{\tau\downarrow 0}\frac{\|\u_{\U^*+\tau \U} - \u_{\U^*}-\tau\mathfrak{w}\|_{\mathrm{L}^{2}(0,T;\V)}}{\tau}&=0,
\end{aligned}
\right.
\end{equation}
	where $\mathfrak{w}(\cdot)$ is the \emph{unique weak solution} of the linearized system:
\begin{equation}\label{4a}
\left\{
\begin{aligned}
\partial_t\mathfrak{w}(t)+\mu\A\mathfrak{w}(t)+\B'(\u_{\U^*}(t))\mathfrak{w}(t)+\alpha\mathfrak{w}(t)+\beta\mathcal{C}'(\u_{\U^*}(t))\mathfrak{w}(t)&=\D\U(t),\ \text{ in }\ \V',\\
\mathfrak{w}(0)&=\mathbf{0}, 
\end{aligned}
\right.
\end{equation}
 a.e. $t\in[0,T]$,  and $\u_{\U^*}(\cdot)$  and $\u_{\U^*+\tau\U}(\cdot)$ are the unique weak solutions of the controlled system \eqref{2.18} with  the controls $\U^*$ and $\U^*+\tau\U$, respectively.
\end{lemma}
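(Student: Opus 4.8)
The plan is to establish the two quantitative limits in \eqref{lim} directly; Gateaux differentiability then follows at once, since the candidate derivative $\U\mapsto\mathfrak{w}$ is linear in $\U$ and continuous thanks to the linear estimate \eqref{2p20}. Write $\widetilde{\u}:=\u_{\U^*+\tau\U}-\u_{\U^*}$ and set $\y_\tau:=\widetilde{\u}-\tau\mathfrak{w}=\u_{\U^*+\tau\U}-\u_{\U^*}-\tau\mathfrak{w}$, so that $\y_\tau(0)=\mathbf{0}$. It suffices to show $\sup_{t\in[0,T]}\|\y_\tau(t)\|_{\H}^2+\int_0^T\|\y_\tau(t)\|_{\V}^2\d t=o(\tau^2)$ as $\tau\downarrow 0$, after which division by $\tau^2$ gives both assertions in \eqref{lim} (the pointwise-in-$t$ one from the $\H$-supremum, the second from the $\mathrm{L}^2(0,T;\V)$ bound).

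First I would derive the equation for $\y_\tau$ by subtracting the weak formulations of \eqref{2.18} for the controls $\U^*+\tau\U$ and $\U^*$, together with $\tau$ times the linearized system \eqref{4a}. The affine terms ($\partial_t$, $\mu\A$, $\alpha$) combine directly and the forcing $\tau\D\U$ cancels. By bilinearity of $\B$ the convective part becomes $\B(\u_{\U^*+\tau\U},\y_\tau)+\B(\y_\tau,\u_{\U^*})+\tau\B(\widetilde{\u},\mathfrak{w})$, while a second order Taylor expansion of the Forchheimer nonlinearity about $\u_{\U^*}$ produces $\beta\mathcal{C}'(\u_{\U^*})\y_\tau+\beta R_\tau$, where $R_\tau:=\mathcal{C}(\u_{\U^*+\tau\U})-\mathcal{C}(\u_{\U^*})-\mathcal{C}'(\u_{\U^*})\widetilde{\u}$ is the Taylor remainder; for $r=3$ a direct computation gives $R_\tau=2\mathcal{P}[(\u_{\U^*}\cdot\widetilde{\u})\widetilde{\u}]+\mathcal{P}[|\widetilde{\u}|^2\u_{\U^*}]+\mathcal{P}[|\widetilde{\u}|^2\widetilde{\u}]$.

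Next I would run the energy method (rigorously via Faedo--Galerkin, as elsewhere in the paper), testing the $\y_\tau$-equation with $\y_\tau$. Here $\langle\B(\u_{\U^*+\tau\U},\y_\tau),\y_\tau\rangle=0$ by \eqref{b0}, and $\langle\mathcal{C}'(\u_{\U^*})\y_\tau,\y_\tau\rangle\geq 0$ by \eqref{2.9}, so the linear Forchheimer part carries a favourable sign to the left. The term $\langle\B(\y_\tau,\u_{\U^*}),\y_\tau\rangle$ is controlled by $\|\u_{\U^*}\|_{\V}\|\y_\tau\|_{\wi\L^4}^2$ exactly as in \eqref{2p16}, yielding after Ladyzhenskaya and Young a $\tfrac{\mu}{8}\|\y_\tau\|_{\V}^2$ piece plus a $\tfrac{C}{\mu}\|\u_{\U^*}\|_{\V}^2\|\y_\tau\|_{\H}^2$ piece reserved for Gronwall. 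It then remains to show the two genuine source terms $\tau\langle\B(\widetilde{\u},\mathfrak{w}),\y_\tau\rangle$ and $\beta\langle R_\tau,\y_\tau\rangle$ are $o(\tau^2)$ after integration in time, and a Gronwall argument identical to \eqref{2p19}, using $\y_\tau(0)=\mathbf{0}$, finishes the proof via $\sup_t\|\y_\tau(t)\|_{\H}^2+\mu\int_0^T\|\y_\tau\|_{\V}^2\d t\leq o(\tau^2)\,e^{\frac{8}{\mu}\int_0^T\|\u_{\U^*}\|_{\V}^2\d t}$.

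The main obstacle is the Forchheimer remainder $\beta\int_0^T\langle R_\tau,\y_\tau\rangle\d t$, and the decisive point is to estimate it entirely in $\mathrm{L}^4(0,T;\wi\L^4)$ by H\"older's inequality, rather than trying to absorb $\|\y_\tau\|_{\wi\L^4}$ into the viscous term (which would demand more time integrability than $\widetilde{\u}$ possesses). The crucial inputs are the a priori bounds of Lemma \ref{lem3.7}, namely $\sup_t\|\widetilde{\u}\|_{\H}^2+\int_0^T\|\widetilde{\u}\|_{\V}^2\d t\lesssim\tau^2$, which through the interpolation $\|\widetilde{\u}\|_{\wi\L^4}^4\leq 2\|\widetilde{\u}\|_{\H}^2\|\widetilde{\u}\|_{\V}^2$ give $\|\widetilde{\u}\|_{\mathrm{L}^4(0,T;\wi\L^4)}\lesssim\tau$, together with the crude bound $\|\y_\tau\|_{\mathrm{L}^4(0,T;\wi\L^4)}\leq\|\widetilde{\u}\|_{\mathrm{L}^4(0,T;\wi\L^4)}+\tau\|\mathfrak{w}\|_{\mathrm{L}^4(0,T;\wi\L^4)}\lesssim\tau$. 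With these, H\"older in time gives $\int_0^T\|\widetilde{\u}\|_{\wi\L^4}^3\|\y_\tau\|_{\wi\L^4}\d t\lesssim\tau^4$ and $\int_0^T\|\u_{\U^*}\|_{\wi\L^4}\|\widetilde{\u}\|_{\wi\L^4}^2\|\y_\tau\|_{\wi\L^4}\d t\lesssim\tau^3$, so the whole remainder is $O(\tau^3)=o(\tau^2)$; the bilinear source is handled by absorbing $\tfrac{\mu}{8}\|\y_\tau\|_{\V}^2$ and bounding $\tau^2\int_0^T\|\widetilde{\u}\|_{\wi\L^4}^2\|\mathfrak{w}\|_{\wi\L^4}^2\d t\lesssim\tau^4$. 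For $r=2$ the same scheme works with the pointwise bound $|R_\tau|\lesssim|\widetilde{\u}|^2$ coming from the Lipschitz continuity of $\mathcal{C}'$, and $r=1$ is immediate since $\mathcal{C}$ is linear and $R_\tau\equiv\mathbf{0}$.
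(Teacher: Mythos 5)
Your proof is correct and follows the same overall skeleton as the paper's: form $\y_\tau=\u_{\U^*+\tau\U}-\u_{\U^*}-\tau\mathfrak{w}$, derive its equation, run an energy estimate in which the convective term $\langle\B(\y_\tau,\u_{\U^*}),\y_\tau\rangle$ feeds Gronwall and the sign of $\langle\mathcal{C}'(\u_{\U^*})\y_\tau,\y_\tau\rangle$ is exploited, and then use Lemma \ref{lem3.7} together with the Ladyzhenskaya interpolation $\|\widetilde{\u}\|_{\wi\L^4}^4\leq 2\|\widetilde{\u}\|_{\H}^2\|\widetilde{\u}\|_{\V}^2$ to make the quadratic sources $o(\tau^2)$. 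Your grouping of the convective part, $\B(\u_{\U^*+\tau\U},\y_\tau)+\B(\y_\tau,\u_{\U^*})+\tau\B(\widetilde{\u},\mathfrak{w})$, is algebraically equivalent to the paper's equation \eqref{4b} (whose source is $\B(\widetilde{\u},\widetilde{\u})$; after testing with $\y_\tau$ and using \eqref{b0} the two coincide), so the difference is only in bookkeeping. Where you genuinely diverge is the treatment of the remainders: you estimate the time-integrated sources by H\"older entirely in $\mathrm{L}^4(0,T;\wi\L^4)$, using the crude bound $\|\y_\tau\|_{\mathrm{L}^4(0,T;\wi\L^4)}\lesssim\tau$ (valid since $\mathfrak{w}\in\mathrm{L}^{\infty}(0,T;\H)\cap\mathrm{L}^2(0,T;\V)\subset\mathrm{L}^4(0,T;\wi\L^4)$ by Theorem \ref{linear}), which yields $O(\tau^3)=o(\tau^2)$; the paper instead absorbs $\|\y\|_{\wi\L^4}$ pointwise in time into the viscous term via $\|\y\|_{\wi\L^4}\leq 2^{1/4}\|\y\|_{\H}^{1/2}\|\y\|_{\V}^{1/2}$ and Young's inequality (see \eqref{3a30} for $r=2$ and \eqref{3b30} for $r=3$), leaving only $\|\widetilde{\u}\|_{\wi\L^4}^4$ sources and hence the sharper rate $O(\tau^4)$. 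Both suffice for \eqref{lim}; yours is slightly more elementary (no fractional Young exponents), the paper's gives a better power of $\tau$. One parenthetical claim of yours is inaccurate: absorbing $\|\y_\tau\|_{\wi\L^4}$ into the viscous term does \emph{not} demand more integrability than $\widetilde{\u}$ possesses --- the paper does exactly this in \eqref{3b30}, needing only $\widetilde{\u}\in\mathrm{L}^4(0,T;\wi\L^4)$, which \eqref{325a} plus interpolation provides; this misstatement does not affect the validity of your argument.
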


\begin{proof}
	Let us define 
$
	\y:=\u_{\U^*+\tau\U}-\u_{\U^*}-\tau\mathfrak{w}.
$
	Then $	\y(\cdot)$ satisfies the following system:
	\begin{equation}\label{4b}
	\left\{
	\begin{aligned}
	\frac{\partial \y(t)}{\partial t}&+\mu\A\y(t)+\B'(\mathbf{u}_{\U^*}(t))\y(t)+\alpha\y(t)+\beta\mathcal{C}'(\mathbf{u}_{\U^*}(t))\y(t)\\&=-\left[\B(\u_{\U^*+\tau\U}(t))-\B(\u_{\U^*}(t))-\B'(\u_{\U^*}(t))(\u_{\U^*+\tau\U}(t)-\u_{\U^*}(t))\right] \\& \quad -\beta\left[\mathcal{C}(\u_{\U^*+\tau\U}(t))-\mathcal{C}(\u_{\U^*}(t))-\mathcal{C}'(\mathbf{u}_{\U^*}(t))(\u_{\U^*+\tau\U}(t)-\u_{\U^*}(t))\right],\ \text{ in }\ \V',\\
	\y(0)&=\mathbf{0},
	\end{aligned}
	\right.
	\end{equation}
a.e. $t\in[0,T]$.  	Remember that the  terms $$\B(\u_{\U^*+\tau\U}-\u_{\U^*},\u_{\U^*+\tau\U}-\u_{\U^*})=\B(\u_{\U^*+\tau\U})-\B(\u_{\U^*})-\B'(\u_{\U^*})(\u_{\U^*+\tau\U}-\u_{\U^*})$$ and $$\mathcal{C}(\u_{\U^*+\tau\U})-\mathcal{C}(\u_{\U^*})-\mathcal{C}'(\mathbf{u}_{\U^*})(\u_{\U^*+\tau\U}-\u_{\U^*})$$ belong to $\mathrm{L}^p(0,T;\V')$, where $p=2$, for $r=2$ and $p=4/3$, for $r=3$,  and hence the system \eqref{4b} has a unique weak solution with $	\y\in\mathrm{C}([0,T];\H)\cap\mathrm{L}^2(0,T;\V).$	Taking the inner product with $\y(\cdot)$ to the first equation in \eqref{4b}, we find 
\begin{align}\label{3a27}
&\frac{1}{2}\frac{\d}{\d t}\|\y(t)\|_{\H}^2+\mu\|\y(t)\|_{\V}^2+\alpha\|\y(t)\|_{\H}^2+\beta\||\u_{\U^*}(t)|^{\frac{r-1}{2}}\y(t)\|_{\H}^2\nonumber\\&=-\langle\B(\y(t),\u_{\U^*}(t)),\y(t)\rangle-\langle\B(\u_{\U^*+\tau\U}(t)-\u_{\U^*}(t),\u_{\U^*+\tau\U}(t)-\u_{\U^*}(t)),\y(t)\rangle\nonumber\\&\quad-\beta\langle\left[\mathcal{C}(\u_{\U^*+\tau\U}(t))-\mathcal{C}(\u_{\U^*}(t))-\mathcal{C}'(\mathbf{u}_{\U^*}(t))(\u_{\U^*+\tau\U}(t)-\u_{\U^*}(t))\right],\y(t)\rangle. 
\end{align}
Using H\"older's, Ladyzhenskaya'a and Young's inequalities, we estimate $-\langle\B(\y,\u_{\U^*}),\y\rangle$ as 
\begin{align}\label{3a28}
-\langle\B(\y,\u_{\U^*}),\y\rangle&\leq\|\u_{\U^*}\|_{\V}\|\y\|_{\wi\L^4}^2\leq\sqrt{2}\|\u_{\U^*}\|_{\V}\|\y\|_{\H}\|\y\|_{\V}\leq\frac{\mu}{4}\|\y\|_{\V}^2+\frac{2}{\mu}\|\u_{\U^*}\|_{\V}^2\|\y\|_{\H}^2. 
\end{align}
A similar calculation yields 
\begin{align}
-\langle\B(\u_{\U^*+\tau\U}-\u_{\U^*},\u_{\U^*+\tau\U}-\u_{\U^*}),\y(t)\rangle\leq\frac{\mu}{4}\|\y\|_{\V}^2+\frac{2}{\mu}\|\u_{\U^*+\tau\U}-\u_{\U^*}\|_{\wi\L^4}^4. 
\end{align}
In order to estimate the term $-\beta\langle\left[\mathcal{C}(\u_{\U^*+\tau\U})-\mathcal{C}(\u_{\U^*})-\mathcal{C}'(\mathbf{u}_{\U^*})(\u_{\U^*+\tau\U}-\u_{\U^*})\right],\y\rangle$, we consider the cases $r=2$ and $r=3$ separately. For $r=2$, an application of Taylor's formula (see Theorem 7.9.1, \cite{PGC}), H\"older's and Young's inequalities yield
\begin{align}\label{3a30}
&-\beta\langle\left[\mathcal{C}(\u_{\U^*+\tau\U})-\mathcal{C}(\u_{\U^*})-\mathcal{C}'(\mathbf{u}_{\U^*})(\u_{\U^*+\tau\U}-\u_{\U^*})\right],\y\rangle\nonumber\\&=-\beta \left<\int_0^1\mathcal{C}'(\theta\u_{\U^*+\tau\U}+(1-\theta)\u_{\U^*})\d\theta (\u_{\U^*+\tau\U}-\u_{\U^*})-\mathcal{C}'(\mathbf{u}_{\U^*})(\u_{\U^*+\tau\U}-\u_{\U^*}),\y\right>\nonumber\\&=-\beta\int_0^1\langle\left[|\theta\u_{\U^*+\tau\U}+(1-\theta)\u_{\U^*}|-|\mathbf{u}_{\U^*}|\right](\u_{\U^*+\tau\U}-\u_{\U^*}),\y\rangle\d\theta\nonumber\\&\quad-\beta\int_0^1\left<\left[\frac{\theta\u_{\U^*+\tau\U}+(1-\theta)\u_{\U^*}}{|\theta\u_{\U^*+\tau\U}+(1-\theta)\u_{\U^*}|}(\theta\u_{\U^*+\tau\U}+(1-\theta)\u_{\U^*})-\frac{\u_{\U^*}}{|\u_{\U^*}|}\u_{\U^*}\right]\cdot(\u_{\U^*+\tau\U}-\u_{\U^*}),\y\right>\d\theta\nonumber\\&\leq \beta\sup_{0<\theta<1}\langle|\theta(\u_{\U^*+\tau\U}-\u_{\U^*})||\u_{\U^*+\tau\U}-\u_{\U^*}|,|\y|\rangle\nonumber\\&\quad+3\beta\sup_{0<\theta<1}\langle|\theta\u_{\U^*+\tau\U}+(1-\theta)\u_{\U^*}-\u_{\U^*}||\u_{\U^*+\tau\U}-\u_{\U^*}|,|\y|\rangle\nonumber\\&\leq 4\beta\|\u_{\U^*+\tau\U}-\u_{\U^*}\|_{\wi\L^4}^2\|\y\|_{\H}\nonumber\\&\leq\frac{\mu}{4}\|\y\|_{\V}^2+\frac{16}{\mu\lambda_1}\|\u_{\U^*+\tau\U}-\u_{\U^*}\|_{\wi\L^4}^4.
\end{align}
Combining \eqref{3a28}-\eqref{3a30}, substituting it in \eqref{3a27}  and then integrating from $0$ to $t$, we obtain 
\begin{align}\label{3a31}
&\|\y(t)\|_{\H}^2+\frac{\mu}{2}\int_0^t\|\y(s)\|_{\V}^2\d s+2\alpha\int_0^t\|\y(s)\|_{\H}^2\d s+2\beta\int_0^t\||\u_{\U^*}(s)|^{\frac{r-1}{2}}\y(s)\|_{\H}^2\d s\nonumber\\&\leq \frac{4}{\mu}\int_0^t\|\u_{\U^*}(s)\|_{\V}^2\|\y(s)\|_{\H}^2\d s+\frac{2}{\mu}\left(2+\frac{16}{\lambda_1}\right)\int_0^t\|\u_{\U^*+\tau\U}(s)-\u_{\U^*}(s)\|_{\wi\L^4}^4\d s.
\end{align}
An application of Gronwall's inequality in \eqref{3a31} yields  
\begin{align}\label{3a32}
\|\y(t)\|_{\H}^2&\leq \left\{\frac{2}{\mu}\left(2+\frac{16}{\lambda_1}\right)\int_0^T\|\u_{\U^*+\tau\U}(t)-\u_{\U^*}(t)\|_{\wi\L^4}^4\d t\right\}\exp\left(\frac{4}{\mu}\int_0^T\|\u_{\U^*}(t)\|_{\V}^2\d t\right)\nonumber\\&\leq\frac{2}{\mu^2}\left(2+\frac{16}{\lambda_1}\right)\left\{\frac{2\tau^2}{\mu}\|\D\|_{\mathcal{L}(\mathbb{U},\V')}^2\int_0^T\|\U(t)\|_{\mathbb{U}}^2\d t\right\}^2e^{\frac{12K_T}{\mu^2}},
\end{align}
for all $t\in[0,T]$, where we used \eqref{325a}. Passing $\tau\to 0$ in \eqref{3a32} yields the required result \eqref{lim} for the case $r=2$. 

For $r=3$, we estimate the term $-\beta\langle\left[\mathcal{C}(\u_{\U^*+\tau\U})-\mathcal{C}(\u_{\U^*})-\mathcal{C}'(\mathbf{u}_{\U^*})(\u_{\U^*+\tau\U}-\u_{\U^*})\right],\y\rangle$ as follows. Once again an application of Taylor's formula gives (see Theorem 6.5, \cite{RFC})
\begin{align}\label{3b30}
-&\beta\langle\left[\mathcal{C}(\u_{\U^*+\tau\U})-\mathcal{C}(\u_{\U^*})-\mathcal{C}'(\mathbf{u}_{\U^*})(\u_{\U^*+\tau\U}-\u_{\U^*})\right],\y\rangle\nonumber\\&\leq \frac{\beta}{2}\sup_{0<\theta<1}\|\mathcal{C}''(\theta\u_{\U^*+\tau\U}+(1-\theta)\u_{\U^*})(\u_{\U^*+\tau\U}-\u_{\U^*})\otimes(\u_{\U^*+\tau\U}-\u_{\U^*})\|_{\wi\L^{4/3}}\|\y\|_{\wi\L^4}\nonumber\\&\leq 3\beta\sup_{0<\theta<1}\|\theta\u_{\U^*+\tau\U}+(1-\theta)\u_{\U^*}\|_{\wi\L^4}\|\u_{\U^*+\tau\U}-\u_{\U^*}\|_{\wi\L^4}^2\|\y\|_{\wi\L^4}\nonumber\\&\leq 2^{1/4}3\beta\left(\|\u_{\U^*+\tau\U}\|_{\wi\L^4}+\|\u_{\U^*}\|_{\wi\L^4}\right)\|\u_{\U^*+\tau\U}-\u_{\U^*}\|_{\wi\L^4}^2\|\y\|_{\H}^{1/2}\|\y\|_{\V}^{1/2}\nonumber\\&\leq\frac{\mu}{4}\|\y\|_{\V}^2+ \frac{6^{1/3}9\beta^{4/3}}{4\mu^{1/3}} \left(\|\u_{\U^*+\tau\U}\|_{\wi\L^4}+\|\u_{\U^*}\|_{\wi\L^4}\right)^{4/3}\|\u_{\U^*+\tau\U}-\u_{\U^*}\|_{\wi\L^4}^{8/3}\|\y\|_{\H}^{2/3}\nonumber\\&\leq \frac{\mu}{4}\|\y\|_{\V}^2+\left(\|\u_{\U^*+\tau\U}\|_{\wi\L^4}^4+\|\u_{\U^*}\|_{\wi\L^4}^4\right)\|\y\|_{\H}^2+\frac{9\beta^2}{\sqrt{2\mu}}\|\u_{\U^*+\tau\U}-\u_{\U^*}\|_{\wi\L^4}^4,
\end{align}
and a calculation similar to \eqref{3a32} yields the required result. 
\end{proof}

Let us now prove the main Theorem \ref{main} by  invoking the Lemmas \ref{lem3.7} and \ref{lem3.8}. 
\begin{proof}[Proof of Theorem \ref{main}]
	Let $(\u^*,\U^*)\in\mathscr{A}_{\mathrm{ad}}$ be an optimal pair for the problem \eqref{control problem} established in Theorem \ref{optimal}. Let $\mathcal{G}(\U)=\mathscr{J}(\u_{\U},{\U})$, where $({\u}_{\U},\U)$ is the unique weak solution to the controlled system \eqref{2.18} with the control $\U\in\mathscr{U}_{\mathrm{ad}}$.  Let $\tau$ be sufficiently small such that $\U^*+\tau\U\in\mathscr{U}_{\mathrm{ad}}$ and $(\u_{\U^*+\tau\U},{\U^*+\tau\U})\in\mathscr{A}_{\mathrm{ad}}$. Then, we have (cf. \cite{BDM,MTM5}, etc)
	\begin{align}\label{lam}
&	\mathcal{G}(\U^*+ \tau \U) - \mathcal{G}(\U^*)\nonumber \\&= \mathscr{J}(\u_{\U^* + \tau \U},{\U^* + \tau \U})- \mathscr{J}(\u_{\U^*},{\U^*})  \nonumber\\       
	&= \frac{1}{2} \int_0^T  \|\u_{\U^* + \tau \U}(t)-\u_d(t)\|^2_{\H}  -\frac{1}{2} \int_0^T  \|\u_{\U^*}(t) -\u_d(t)\|^2_{\H}\d t \nonumber\\&\quad +\frac{1}{2}\int_0^T\|\nabla\u_{\U^*+\tau\U}(t)\|_{\H}^2\d t -\frac{1}{2}\int_0^T\|\nabla\u_{\U^*}(t)\|_{\H}^2\d t\nonumber\\&\quad + \int_0^T  h({\U^*(t) + \tau \U(t)})\d t- \int_0^T  h( \U^*(t))\d t +\frac{1}{2} \|\u_{\U^* + \tau \U}(T)-\u^f\|^2_{\H}-\frac{1}{2}\|\u_{\U^*}(T)-\u^f\|_{\H}^2 
\nonumber	\\ &= \frac{1}{2}\int_0^T  \|\u_{\U^*+\tau \U} (t)- \u_{\U^*}(t)\|^2_{\H} \d t+ \int_0^T (\u_{\U^*+\tau \U} (t)- \u_{\U^*}(t),\u_{\U^*}(t) -\u_d(t))\d t\nonumber\\&\quad+\frac{1}{2}\int_0^T\|\u_{\U^*+\tau \U} (t)- \u_{\U^*}(t)\|^2_{\V} \d t-\int_0^T\langle\u_{\U^*+\tau \U} (t)- \u_{\U^*}(t),\Delta\u_{\U^*}(t)\rangle\d t\nonumber\\&\quad+ \int_0^T  [h({\U^*(t) + \tau \U(t)})-h( \U^*(t))]\d t\nonumber\\&\quad +\frac{1}{2}\|\u_{\U^*+\tau \U} (T)- \u_{\U^*}(T)\|^2_{\H}+(\u_{\U^* + \tau \U}(T)-\u_{\U^* }(T),\u_{\U^*}(T)-\u^f) .
	\end{align}
	Using the estimate  \eqref{325a} (see Lemma \ref{lem3.7}), we know that $\|\u_{\U^*+\tau \U}(t) - \u_{\U^*}(t)\|^2_{\H},$ for all $t\in[0,T]$ and $\|\u_{\U^*+\tau \U} - \u_{\U^*}\|^2_{\mathrm{L}^2(0,T;\V)}$  can be estimated by $C\tau^2 \|\U\|^2_{\mathrm{L}^{2}(0,T;\mathbb{U})}$. Thus dividing by $\tau$, and then sending $\tau \downarrow 0$, we easily have  $\|\u_{\U^*+\tau \U} - \u_{\U^*}\|_{\mathrm{L}^2(0,T;\H)} \rightarrow 0$, $\|\u_{\U^*+\tau \U} - \u_{\U^*}\|_{\mathrm{L}^2(0,T;\V)}$ and $\|\u_{\U^*+\tau \U} (T)- \u_{\U^*}(T)\|_{\H}\to 0$  as $\tau \downarrow 0$. We denote the directional derivative of $h(\cdot)$ at $\U^*$ in the direction $\U$ as 
	\begin{align}
	h'(\U^*,\U)=\lim_{\lambda\downarrow 0} \frac{h(\U^*+\tau\U)-h(\U^*)}{\tau}. 
	\end{align}
	We also denote the directional derivative of $\mathcal{G}$ at $\U^*$ in the direction of $\U$ by $\mathcal{G}'(\U^*,\U)$. Let $\mathfrak{w}(\cdot)$ satisfies the linearized system \eqref{4a}.  From the Lemma \ref{lem3.8}, we also have the convergence \eqref{lim}.	Dividing by $\tau$ and then taking $\tau \downarrow 0$ in (\ref{lam}), we obtain
	\begin{align}\label{329}
	0 &\leq \mathcal{G}'(\U^*(t),\U(t)) = \underset{\tau \downarrow 0}{\lim} \frac{\mathcal{G}(\U^*(t)+ \tau \U(t))- \mathcal{G}(\U^*(t))}{\tau} \nonumber\\
	&= \int_0^T  \langle\mathfrak{w}(t),\u_{\U^*}(t) -\u_d(t)-\Delta\u_{\U^*}(t)\rangle\d t +  \int_0^Th'(\U^*(t),\U(t)) \d t
+(\mathfrak{w}(T),\u_{\U^*}(T)-\u^f)	\nonumber\\&= \int_0^T  \langle \mathfrak{w}(t),-\partial_t\p(t)+\mu\A\p(t)+(\B'(\u_{\U^*}(t)))^*\p (t)+\alpha\p(t)+\beta\mathcal{C}'(\u_{\U^*}(t))\p(t)\rangle\d t \nonumber\\&\quad+ \int_0^Th'(\U^*(t),\U(t)) \d t\nonumber\\&= \int_0^T\langle\partial_t\mathfrak{w}(t)+\mu\A\mathfrak{w}(t)+\B'(\u_{\U^*}(t))\mathfrak{w}(t)+\alpha\mathfrak{w}(t)+\beta\mathcal{C}'(\u_{\U^*}(t))\mathfrak{w}(t),\p(t)\rangle\d t\nonumber\\&\quad+ \int_0^Th'(\U^*(t),\U(t)) \d t\nonumber\\&= \int_0^T( \D\U (t),\p(t))\d t  + \int_0^Th'(\U^*(t),\U(t)) \d t,
	\end{align}  
	where we performed an integration by parts. Thus, from \eqref{329}, we infer that  
	$$0 \leq \mathcal{G}'(\U^*(t), \U(t))= \int_0^T(\U (t),\D^*\p(t))_{\mathbb{U}}\d t  + \int_0^Th'(\U^*(t),\U(t)) \d t,$$  for all $\U\in\mathrm{L}^2(0,T;\mathbb{U})$. Hence, from the above relation, it is immediate that 
	\begin{equation}\label{3a37}
-\D^*\p(t) \in \partial h(\U^*(t)), \ \text{ a.e. }\  t \in [0,T],
	\end{equation}
	which completes the proof. 
\end{proof}	

\begin{remark}\label{rem4.4}
1. 	If we take $h(\U)=\frac{1}{2}\|\U\|_{\mathbb{U}}^2$, then the function is Gateaux differentiable and the subdifferential contains a single point and from the relation \eqref{3a37}, we find 
	\begin{align}
	\U^*(t)=-\D^*\p(t),  \ \text{ a.e. }\  t \in [0,T]. 
	\end{align}
	
	2. The difficulty for considering the case for $r\in(3,\infty)$ is that the estimates similar to \eqref{3a30} and \eqref{3b30} are not controllable for $r>3$.   Thus, we are not able to obtain the convergence \eqref{lim} in Lemma \ref{lem3.8}, for $r>3$. One may think of modifying the admissible class $\mathscr{A}_{\mathrm{ad}}$ to the class of strong solutions $\u\in\C([0,T];\V)\cap\mathrm{L}^2(0,T;\D(\A))$,  with $\u_0\in\V$, $\f\in\mathrm{L}^2(0,T;\H)$ and $\D\in\mathcal{L}(\mathbb{U};\H)$. To the best of author's knowledge,  such strong solutions exist, for $r\in[1,3]$ only. As discussed in \cite{KT2}, we know that in bounded domains, $\mathcal{P}(|\u|^{r-1}\u)$ need not be zero on the boundary, and $\mathcal{P}$ and $-\Delta$ are not necessarily commuting (for a counterexample, see Example 2.19, \cite{JCR4}). Thus handling the term $(\mathcal{C}(\u),\A\u)=(\mathcal{P}(|\u|^{r-1}\u),\A\u),$ for $r>3$ is a difficult. Moreover, $\Delta\u\big|_{\partial\mathcal{O}}\neq 0$ in general and the term with pressure will not disappear (see \cite{KT2}), while taking the inner product with $-\Delta\u$ to the first equation in \eqref{1}. Therefore, the equality (\cite{KWH})
	\begin{align}\label{3}
	&\int_{\mathcal{O}}(-\Delta\u(x))\cdot|\u(x)|^{r-1}\u(x)\d x\nonumber\\&=\int_{\mathcal{O}}|\nabla\u(x)|^2|\u(x)|^{r-1}\d x+4\left[\frac{r-1}{(r+1)^2}\right]\int_{\mathcal{O}}|\nabla|\u(x)|^{\frac{r+1}{2}}|^2\d x\nonumber\\&=\int_{\mathcal{O}}|\nabla\u(x)|^2|\u(x)|^{r-1}\d x+\frac{r-1}{4}\int_{\mathcal{O}}|\u(x)|^{r-3}|\nabla|\u(x)|^2|^2\d x,
	\end{align}
	may not be useful in the context of bounded domains.
\end{remark}

\subsection{Uniqueness of optimal control in small time interval} In this subsection, we show the uniqueness of optimal control in small time interval for the optimal control problem \eqref{control problem} obtained in Theorem \ref{optimal}. Remember that the control to state mapping is nonlinear and getting  global in time unique optimal control is difficult. Thus, we are looking for a time $T$ such that this time ensures the uniqueness of optimal control. If we choose the final time $T$ to be  sufficiently small, then the state equation for $\u(\cdot)$ differ from the corresponding linearized problem $\mathfrak{w}(\cdot)$ slightly only.  In this case, the linearized state equation  produces a strictly convex cost functional and the corresponding optimal control is unique. Here, we take  $h(\U)=\frac{1}{2}\|\U\|_{\mathbb{U}}^2$ in \eqref{cost}.  
\begin{theorem}\label{thm4.5}
	Let $(\u^*,\U^*)\in\mathscr{A}_{\mathrm{ad}}$ be an  \emph{optimal pair} for the problem \eqref{control problem} with $h(\U)=\frac{1}{2}\|\U\|_{\mathbb{U}}^2$. Then, if the final time $T$ is sufficiently small, then an optimal pair $(\u^*,\U^*)\in\mathscr{A}_{\mathrm{ad}}$ obtained in the Theorem \ref{optimal}  is unique. 
\end{theorem}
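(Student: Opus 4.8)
The plan is to reduce uniqueness to showing that the first-order optimality system characterising an optimal pair has a unique solution once $T$ is small; by Theorem \ref{optimal} and Theorem \ref{main} this is equivalent to uniqueness of the optimal control. With $h(\U)=\tfrac12\|\U\|_{\mathbb{U}}^2$, Remark \ref{rem4.4}(1) converts the inclusion \eqref{3a37} into the explicit feedback law $\U^*=-\D^*\p$, so an optimal pair is precisely a solution of the coupled forward–backward system made of the state equation \eqref{2.18}, the adjoint equation \eqref{adjp}, and $\U^*=-\D^*\p$. I would suppose that $(\u_1^*,\U_1^*)$ and $(\u_2^*,\U_2^*)$ are two optimal pairs delivered by Theorem \ref{optimal}, with adjoint states $\p_1,\p_2$ from Theorem \ref{main}, and set $\wi\u=\u_1^*-\u_2^*$, $\wi\p=\p_1-\p_2$ and $\wi\U=\U_1^*-\U_2^*=-\D^*\wi\p$. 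The whole argument then reduces to proving that the self-map of $\mathrm{L}^2(0,T;\V)$ obtained by composing the forward and backward difference estimates is a strict contraction when $T$ is small.

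First I would estimate the forward difference. Since $\wi\u(0)=\mathbf 0$ and $\wi\u$ solves the difference of two copies of \eqref{2.18} with right-hand side $\D\wi\U=-\D\D^*\wi\p$, the computation of Lemma \ref{lem3.7} (using \eqref{2p8}, \eqref{2.23}, \eqref{a215}, the local monotonicity of Theorems \ref{thm2.2}–\ref{thm2.3}, and Gronwall's inequality) yields $\sup_{t}\|\wi\u(t)\|_{\H}^2+\mu\int_0^T\|\wi\u\|_{\V}^2\d t\le C_1\int_0^T\|\wi\p\|_{\V}^2\d t$, where $C_1$ stays bounded as $T\downarrow 0$ because the Gronwall exponent $\tfrac{4}{\mu}\int_0^T\|\u_2^*\|_{\V}^2\d t$ does. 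Next I would estimate the backward difference: $\wi\p$ satisfies the difference of two copies of \eqref{adjp} with terminal value $\wi\p(T)=\wi\u(T)$ and source $\wi\u-\Delta\wi\u$ plus the differences of the adjoint nonlinear operators $(\B'(\u_i^*))^*\p_i$ and $\beta\mathcal{C}'(\u_i^*)\p_i$. Testing with $\wi\p$ and integrating from $t$ to $T$ as in \eqref{37}–\eqref{40}, I would bound $\int_0^T\|\wi\p\|_{\V}^2\d t$ by $\|\wi\u(T)\|_{\H}^2$, $\int_0^T\|\wi\u\|_{\H}^2\d t$, $\int_0^T\|\wi\u\|_{\V}^2\d t$ and a nonlinear remainder. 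The point is that the tracking contribution $\int_0^T\|\wi\u\|_{\H}^2\d t\le T\sup_t\|\wi\u\|_{\H}^2$ carries an explicit factor of $T$, while the remainder terms coming from $(\B'(\u_i^*))^*$ and (for $r=3$) the cubic term $\mathcal{C}'(\u_i^*)$ carry integrals $\int_0^T(\|\u_i^*\|_{\V}^2+\|\p_i\|_{\V}^2+\|\u_i^*\|_{\wi\L^4}^4)\d t$ that vanish with $T$, by the uniform bounds \eqref{p36} and \eqref{40}; these are controlled exactly as in \eqref{2p13} and \eqref{2p25} via Hölder's and Ladyzhenskaya's inequalities.

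Combining the forward and backward estimates produces an inequality $\int_0^T\|\wi\p\|_{\V}^2\d t\le \gamma(T)\int_0^T\|\wi\p\|_{\V}^2\d t$, and once $\gamma(T)<1$ I would conclude $\wi\p=\mathbf 0$, hence $\wi\U=-\D^*\wi\p=\mathbf 0$, and finally $\wi\u=\mathbf 0$ by the forward estimate (equivalently by the uniqueness in Theorem \ref{main2}). An alternative and arguably cleaner route is to establish directly that $\mathcal{G}(\U)=\mathscr{J}(\u_{\U},\U)$ is strongly convex for small $T$: by the parallelogram identity the control term supplies $\tfrac14\|\U_1-\U_2\|_{\mathrm{L}^2(0,T;\mathbb{U})}^2$, the tracking, enstrophy and terminal terms are convex in the state, and the only indefinite contribution arises from the nonlinearity of $\U\mapsto\u_{\U}$, measured by the defect $\tfrac12(\u_{\U_1}+\u_{\U_2})-\u_{(\U_1+\U_2)/2}$, which solves a linearised equation with a quadratic source and is $O(\|\U_1-\U_2\|^2)$ with coefficient governed by the small quantities $\int_0^T\|\u_{\U_i}\|_{\V}^2\d t$. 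I expect the genuine obstacle to lie in the gradient-type terms of the backward estimate, namely the enstrophy source $\langle\nabla\wi\u,\nabla\wi\p\rangle$ and the terminal term $\|\wi\u(T)\|_{\H}^2$: unlike the tracking term they do not come with an automatic power of $T$, so they must be absorbed into the viscous dissipation $\mu\int_0^T\|\wi\p\|_{\V}^2\d t$ through the forward estimate, and the delicate book-keeping is to verify that after this absorption the closing constant $\gamma(T)$ is strictly below one for $T$ sufficiently small.
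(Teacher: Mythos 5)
Your proposal follows essentially the same route as the paper's proof: both reduce uniqueness to the coupled forward--backward optimality system via the feedback law $\U^*=-\D^*\p$ from Theorem \ref{main}, bound the state difference by the control/adjoint difference exactly as in Lemma \ref{lem3.7}, bound the adjoint difference by the state difference by testing the difference of the two adjoint systems \eqref{adjp} (with the same Taylor-formula treatment of the $\mathcal{C}'$ terms for $r=2$ and $r=3$), and close a contraction-type inequality $X\le\gamma(T)X$ that forces the control difference to vanish when $T$ is small. The delicate point you flag --- the enstrophy source and the terminal term $\|\wi\u(T)\|_{\H}^2$ carrying no explicit power of $T$ --- is handled in the paper precisely as you suggest, by feeding them back through the forward estimate \eqref{325a} so that they too are proportional to $\int_0^T\|\D(\U^*(t)-\widetilde{\U}(t))\|_{\V'}^2\d t$ before smallness of $T$ is invoked.
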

\begin{proof}
	We assume that there exists an another optimal pair $(\widetilde{\u},\widetilde{\U})\in\mathscr{A}_{\mathrm{ad}}$. 
From the Theorem \ref{main}, 	we know that  $\U^*(t)=-\D^*\p^*(t)\in\mathbb{U}$, a.e. $t\in[0,T]$, and $\widetilde{\U}(t)=-\D^*\widetilde{\p}(t)\in\mathbb{U}$, a.e. $t\in[0,T]$. Note also that $\p^*(\cdot)$ satisfies the adjoint system \eqref{adjp}  and $\widetilde{\p}(\cdot)$ satisfies the adjoint system \eqref{adjp} with $\u^*(\cdot)$ replaced by $\wi\u(\cdot)$, and  the forcing $(\u^*(\cdot)-\u_d(\cdot))-\Delta\u^*(\cdot)$ replaced by $(\widetilde{\u}(\cdot)-\u_d(\cdot))-\Delta\wi\u(\cdot)$. Thus, we have 
	\begin{align}\label{3.46}
\int_0^T	\|\D(\U^*(t)-\widetilde{\U}(t))\|_{\mathbb{V}'}^2\d t=\int_0^T\|\p^*(t)-\widetilde{\p}(t)\|_{\mathbb{V}'}^2\d t\leq\frac{1}{\lambda_1^2}\int_0^T\|\p^*(t)-\widetilde{\p}(t)\|_{\mathbb{V}}^2\d t. 
	\end{align}
	The quantity on the right hand side of the inequality \eqref{3.46} can be estimated similarly as in  \eqref{40}, as the system satisfied by $\p(\cdot)=\widetilde{\p}(\cdot)-\p^*(\cdot)$ is given by 
	\begin{eqnarray}\label{3.47}
	\left\{
	\begin{aligned}
	-\partial_t\p(t)+\mu{\A}\p(t)+(\B'(\widetilde{\u}(t)))^*\wi\p(t)-(\B'(\u^*(t)))^*\p^* (t)&+\alpha\p(t)\\+\beta(\mathcal{C}'(\widetilde{\u}(t))\wi\p(t)-\mathcal{C}'(\u^*(t))\p^*(t))&= \u(t)-\Delta\u(t), \ \text{ in } \ \V',\\ \p(T)&=\u(T), 
	\end{aligned}
	\right.
	\end{eqnarray}
	for a.e. $t\in[0,T]$,  where $\u=\wi\u-\u^*$.  Taking the inner product with $\p(\cdot)$ to the first equation in \eqref{3.47} and then integrating from $t$ to $T$, we find 
	\begin{align}\label{3.49}
	&\|\p(t)\|_{\H}^2+2\mu\int_t^T\|\p(s)\|_{\V}^2\d s+2\alpha\int_t^T\|\p(s)\|_{\H}^2\d s\nonumber\\&=\|\u(T)\|_{\H}^2 -2\int_t^T\langle(\B'(\widetilde{\u}(s)))^*\wi\p(s)-(\B'(\u^*(s)))^*\p^* (s),\p(s)\rangle\d s\nonumber\\&\quad-2\beta\int_t^T\langle\mathcal{C}'(\widetilde{\u}(s))\wi\p(s)-\mathcal{C}'(\u^*(s))\p^*(s),\p(s)\rangle\d s +2\int_t^T\langle\u(s)-\Delta\u(s),\p(s)\rangle\d s,
	\end{align}
for all $t\in[0,T]$. Note that 
\begin{align}\label{3a43}
\langle(\B'(\widetilde{\u}))^*\wi\p-(\B'(\u^*))^*\p^* ,\p\rangle&=\langle (\B'(\widetilde{\u}))^*\p,\p\rangle +\langle((\B'(\widetilde{\u}))^*-(\B'(\u^*))^*)\p^*,\p\rangle\nonumber\\&=\langle\B(\p,\wi\u),\p\rangle+\langle\B(\u,\p),\p^*\rangle+\langle\B(\p,\u),\p^*\rangle.
\end{align}
Using H\"older's, Ladyzhenskaya's and Young's inequalities, we estimate the terms in the right hand side of the inequality \eqref{3a43} as 
\begin{align}\label{3a44}
|\langle\B(\p,\wi\u),\p\rangle|&\leq\|\wi\u\|_{\V}\|\p\|_{\wi\L^4}^2\leq\sqrt{2}\|\wi\u\|_{\V}\|\p\|_{\H}\|\p\|_{\V}\leq\frac{\mu}{8}\|\p\|_{\V}^2+\frac{4}{\mu}\|\wi\u\|_{\V}^2\|\p\|_{\H}^2,\\
|\langle\B(\u,\p),\p^*\rangle|&\leq\|\u\|_{\wi\L^4}\|\p\|_{\V}\|\p^*\|_{\wi\L^4}\leq\frac{\mu}{8}\|\p\|_{\V}^2+\frac{2}{\mu}\|\u\|_{\wi\L^4}^2\|\p^*\|_{\wi\L^4}^2, \\
|\langle\B(\p,\u),\p^*\rangle|&\leq\|\p\|_{\wi\L^4}\|\u\|_{\V}\|\p^*\|_{\wi\L^4}\leq 2^{1/4}\|\p\|_{\H}^{1/2}\|\p\|_{\V}^{1/2}\|\u\|_{\V}\|\p^*\|_{\wi\L^4}\nonumber\\&\leq\frac{\mu}{8}\|\p\|_{\V}^2+\frac{3}{4}\left(\frac{2^{2/3}}{\mu^{1/3}}\right)\|\u\|_{\V}^{4/3}\|\p^*\|_{\wi\L^4}^{4/3}\|\p\|_{\H}^{2/3}\nonumber\\&\leq \frac{\mu}{8}\|\p\|_{\V}^2+\frac{1}{2}\|\p^*\|_{\wi\L^4}^4\|\p\|_{\H}^2+\frac{1}{\sqrt{2\mu}}\|\u\|_{\V}^2. 
\end{align}
For $r=2$, the term $-2\beta\langle\mathcal{C}'(\widetilde{\u})\wi\p-\mathcal{C}'(\u^*)\p^*,\p\rangle$ can be estimated as 
\begin{align}\label{3a47}
-2\beta\langle\mathcal{C}'(\widetilde{\u})\wi\p-\mathcal{C}'(\u^*)\p^*,\p\rangle&=-2\beta\langle\mathcal{C}'(\widetilde{\u})\p,\p\rangle-2\beta\langle(\mathcal{C}'(\widetilde{\u})-\mathcal{C}'(\u^*))\p^*,\p\rangle\nonumber\\&\leq -2\beta\langle(|\widetilde{\u}|-|\u^*|)\p^*,\p\rangle-2\beta\left<\frac{\wi\u}{|\wi\u|}(\wi\u\cdot\p^*)-\frac{\u^*}{|\u^*|}(\u^*\cdot\p^*),\p\right>\nonumber\\&\leq 8\beta\|\u\|_{\wi\L^4}\|\p^*\|_{\wi\L^4}\|\p\|_{\H}\nonumber\\&\leq\frac{\mu}{2}\|\p\|_{\V}^2+\frac{32\beta^2}{\lambda_1\mu}\|\u\|_{\wi\L^4}^2\|\p^*\|_{\wi\L^4}^2, 
\end{align} 
since $\langle\mathcal{C}'(\widetilde{\u})\p,\p\rangle\geq 0$. 
Similarly, we obtain 
\begin{align}\label{3a48}
|\langle\u-\Delta\u,\p\rangle| \leq\|\u\|_{\H}\|\p\|_{\H}+\|\u\|_{\V}\|\p\|_{\V}\leq\frac{\mu}{8}\|\p\|_{\V}^2+\frac{4}{\mu}\left(1+\frac{1}{\lambda_1^2}\right)\|\u\|_{\V}^2. 
\end{align}
Combining \eqref{3a44}-\eqref{3a47} and substituting it in \eqref{3.49}, we get 
\begin{align}\label{3a49}
	&\|\p(t)\|_{\H}^2+\frac{\mu}{2}\int_t^T\|\p(s)\|_{\V}^2\d s+2\alpha\int_t^T\|\p(s)\|_{\H}^2\d s\nonumber\\&\leq\|\u(T)\|_{\H}^2+\frac{8}{\mu}\int_t^T\|\wi\u(s)\|_{\V}^2\|\p(t)\|_{\H}^2\d s+\frac{4}{\mu}\left(1+\frac{16\beta^2}{\lambda_1\mu}\right)\int_t^T\|\u(s)\|_{\wi\L^4}^2\|\p^*(s)\|_{\wi\L^4}^2\d s\nonumber\\&\quad+\int_t^T\|\p^*(s)\|_{\wi\L^4}^4\|\p(s)\|_{\H}^2\d s+\left[\sqrt{\frac{2}{\mu}}+\frac{8}{\mu}\left(1+\frac{1}{\lambda_1^2}\right)\right]\int_t^T\|\u(s)\|_{\V}^2\d s .
\end{align}
	An application of Gronwall's inequality in \eqref{3a49} yields 
	\begin{align}\label{3a50} 
	\|\p(t)\|_{\H}^2&\leq\bigg\{\|\u(T)\|_{\H}^2+\frac{4}{\mu}\left(1+\frac{16\beta^2}{\lambda_1}\right)\left(\int_0^T\|\u(t)\|_{\wi\L^4}^4\d t\right)^{1/2}\left(\int_0^T\|\p^*(t)\|_{\wi\L^4}^4\d t\right)^{1/2}\nonumber\\&\qquad+\left[\sqrt{\frac{2}{\mu}}+\frac{8}{\mu}\left(1+\frac{1}{\lambda_1^2}\right)\right]\int_0^T\|\u(t)\|_{\V}^2\d t \bigg\}\nonumber\\&\quad\times\exp\left\{\frac{8}{\mu}\int_0^T\|\wi\u(t)\|_{\V}^2\d t+\int_0^T\|\p^*(t)\|_{\wi\L^4}^4\d t\right\},
	\end{align}
	for all $t\in[0,T]$. Using \eqref{3a50} in \eqref{3a49} and then taking $t=0$, we find 
	\begin{align}\label{3.51}
&\int_0^T\|\p(t)\|_{\V}^2\d t\nonumber\\&\leq\frac{2}{\mu} \bigg\{\|\u(T)\|_{\H}^2+\frac{4}{\mu}\left(1+\frac{16\beta^2}{\lambda_1}\right)\left(\int_0^T\|\u(t)\|_{\wi\L^4}^4\d t\right)^{1/2}\left(\int_0^T\|\p^*(t)\|_{\wi\L^4}^4\d t\right)^{1/2}\nonumber\\&\quad +\left[\sqrt{\frac{2}{\mu}}+\frac{8}{\mu}\left(1+\frac{1}{\lambda_1^2}\right)\right]\int_0^T\|\u(t)\|_{\V}^2\d t \bigg\}\exp\left\{\frac{16}{\mu}\int_0^T\|\wi\u(t)\|_{\V}^2\d t+2\int_0^T\|\p^*(t)\|_{\wi\L^4}^4\d t\right\}\nonumber\\&\leq \left\{\frac{2}{\mu}\int_0^T\|\D(\U^*(t)-\wi\U(t))\|_{\mathbb{V}'}^2\d t\right\}e^{\frac{2}{\mu}\left(\frac{2K_T}{\mu}+M_T\right)} \nonumber\\&\quad\times\frac{2}{\mu} \bigg\{1+\frac{4}{\mu^2}\left(1+\frac{16\beta^2}{\lambda_1}\right)M_T+\left[\sqrt{\frac{2}{\mu}}+\frac{8}{\mu}\left(1+\frac{1}{\lambda_1^2}\right)\right]\bigg\}\nonumber\\&\quad\times\exp\left\{\frac{16}{\mu^3}\int_0^T\|\D(\U^*(t)-\wi\U(t))\|_{\mathbb{V}'}^2\d t\right\}\exp\left(e^{\frac{8K_T}{\mu^2}}\right),
	\end{align}
	where we used \eqref{325a} and \eqref{40}. Combining \eqref{3.46} and \eqref{3.51}, it can be easily seen that 
	\begin{align}
\int_0^T	\|\D(\U^*(t)-\widetilde{\U}(t))\|_{\mathbb{V}'}^2\d t&\leq \left\{\frac{4}{\lambda_1^2\mu^2}\int_0^T\|\D(\U^*(t)-\wi\U(t))\|_{\mathbb{V}'}^2\d t\right\}e^{\frac{2}{\mu}\left(\frac{2K_T}{\mu}+M_T\right)}\nonumber\\&\quad\times  \bigg\{1+\frac{4}{\mu^2}\left(1+\frac{16\beta^2}{\lambda_1}\right)M_T+\left[\sqrt{\frac{2}{\mu}}+\frac{8}{\mu}\left(1+\frac{1}{\lambda_1^2}\right)\right]\bigg\}\nonumber\\&\quad\times\exp\left\{\frac{16}{\mu^3}\int_0^T\|\D(\U^*(t)-\wi\U(t))\|_{\mathbb{V}'}^2\d t\right\}\exp\left(e^{\frac{8K_T}{\mu^2}}\right).
	\end{align}
	From the above relation, one can choose a time $T$ sufficiently small such that $$\int_0^T	\|\D(\U^*(t)-\widetilde{\U}(t))\|_{\mathbb{V}'}^2\d t\leq 0.$$	Thus, we obtain $\U^*(t)=\widetilde{\U}(t)$, a. e. $t\in[0,T]$, for sufficiently small $T$.  
	
	For the case $r=3$, we need to estimate the term $-2\beta\langle\mathcal{C}'(\widetilde{\u})\wi\p-\mathcal{C}'(\u^*)\p^*,\p\rangle$  only. An application of Taylor's formula yields 
	\begin{align}\label{3a53}
	-&2\beta\langle\mathcal{C}'(\widetilde{\u})\wi\p-\mathcal{C}'(\u^*)\p^*,\p\rangle\nonumber\\&\leq -2\beta\langle(\mathcal{C}'(\widetilde{\u})-\mathcal{C}'(\u^*))\p^*,\p\rangle\nonumber\\&\leq\beta\sup_{0<\theta<1}\|\mathcal{C}''(\theta\wi\u+(1-\theta)\u^*)(\u\otimes\p^*)\|_{\wi\L^{\frac{4}{3}}}\|\p\|_{\wi\L^4}\nonumber\\&\leq 2^{1/4}6\beta(\|\wi\u\|_{\wi\L^4}+\|\wi\u^*\|_{\wi\L^4})\|\u\|_{\wi\L^4}\|\p^*\|_{\wi\L^4}\|\p\|_{\H}^{1/2}\|\p\|_{\V}^{1/2}\nonumber\\&\leq\frac{\mu}{2}\|\p\|_{\V}^2+\frac{3}{4\mu^{1/3}}(6\beta)^{4/3}(\|\u\|_{\wi\L^4}+2\|\wi\u^*\|_{\wi\L^4})^{4/3}\|\u\|_{\wi\L^4}^{4/3}\|\p^*\|_{\wi\L^4}^{4/3}\|\p\|_{\H}^{2/3}\nonumber\\&\leq \frac{\mu}{2}\|\p\|_{\V}^2+\frac{1}{2}\|\p^*\|_{\wi\L^4}^4\|\p\|_{\H}^2+\frac{18\sqrt{2}\beta^2}{\sqrt{\mu}}\|\u\|_{\wi\L^4}^4+\frac{72\sqrt{2}\beta^2}{\sqrt{\mu}}\|\u^*\|_{\wi\L^4}^2\|\u\|_{\wi\L^4}^2,
	\end{align} 
	and the proof can be completed using similar arguments as in the case of $r=2$. 
\end{proof}

\section{Second Order Necessary and Sufficient Conditions of Optimality}\label{sec5}\setcounter{equation}{0} 
In this section, we obtain the second order necessary and sufficient conditions of optimality for the problem \eqref{control problem}.  We prove it for the case $r=3$ only, due to the lack of second order Gateaux  derivative of $\mathcal{C}(\u)$ for the case of $r=2$.  Let $(\widehat{\u},\widehat{\U})\in\mathcal{A}_{\mathrm{ad}}$ be an arbitrary feasible pair for the optimal control problem \eqref{control problem}. Let us set 
\begin{align}\label{def Q}
\mathcal{M}_{\widehat{\u},\widehat{\U}}:= \{(\u,\U) \in \mathcal{A}_{\mathrm{ad}} \} - \{(\widehat{\u},\widehat{\U})\},
\end{align}
which denotes the differences of all feasible pairs of the problem \eqref{control problem} with  $(\widehat{\u},\widehat{\U})$. The next two Theorems provide the second order necessary and sufficient optimality condition for the optimal control problem \eqref{control problem} for the case $r=3$. In this section also, we take  $h(\U)=\frac{1}{2}\|\U\|_{\mathbb{U}}^2$ in \eqref{cost}.   Motivated from \cite{CT}, similar problems for Navier-Stokes equations have been considered in \cite{TFWD,LijuanPezije},  Cahn-Hillard-Navier-Stokes equations have been addressed in \cite{BDM3} and  the primitive equations of the ocean have been given in \cite{TTM}.
\begin{theorem}[Necessary condition]\label{necessary}
	Let  $(\u^*,\U^*)$ be an optimal pair for the problem \eqref{control problem}. Then for any $(\u,\U) \in \mathcal{M}_{\u^*,\U^*}$, we have 
	\begin{align}\label{417}
	&\frac{1}{2}\left[\int_0^T\left(\|\u(t)\|_{\H}^2+\|\nabla\u(t)\|_{\H}^2+\|\U(t)\|_{\mathbb{U}}^2\right)\d t \right]+\frac{1}{2}\|\u(T)\|_{\H}^2-\int_0^T\langle\B(\u(t),\u(t)),\p(t)\rangle\d t\nonumber \\ &\quad-\beta\int_0^T\left<2(\u^*(t)\cdot\u(t))\u(t)+|\u(t)|^2(\u(t)+\u^*(t)),\p(t)\right>\d t\geq 0. 
	\end{align}
\end{theorem}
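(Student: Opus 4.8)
The plan is to exploit the fact that, for $r=3$, the map $\mathcal{C}(\cdot)$ is a cubic polynomial, so the increments of $\B$ and $\mathcal{C}$ between two feasible states admit \emph{exact} finite Taylor expansions with no remainder beyond third order. First I would fix $(\u,\U)\in\mathcal{M}_{\u^*,\U^*}$ and write $(\u^*+\u,\U^*+\U)$ for the corresponding feasible pair, so that $\u^*$ and $\u^*+\u$ solve \eqref{2.18} with controls $\U^*$ and $\U^*+\U$, and $\u(0)=\mathbf{0}$ because both states share the initial datum $\u_0$. Subtracting the two copies of \eqref{2.18}, using the bilinearity of $\B$ and the expressions for $\mathcal{C}'$, $\mathcal{C}''$, $\mathcal{C}'''$ recorded in Remark \ref{rem2.2}, I would obtain that $\u$ solves
\begin{align*}
\partial_t\u+\mu\A\u+\B'(\u^*)\u+\alpha\u+\beta\mathcal{C}'(\u^*)\u=\D\U-\B(\u,\u)-\beta\mathcal{P}\big[2(\u^*\cdot\u)\u+|\u|^2(\u+\u^*)\big]
\end{align*}
in $\V'$, a.e. in $[0,T]$, where $\B(\u,\u)$ and the bracketed term are precisely the quadratic-and-cubic remainders arising from $\tfrac12\mathcal{C}''(\u^*)(\u\otimes\u)+\tfrac16\mathcal{C}'''(\u^*)(\u\otimes\u\otimes\u)$.

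Next, since $h(\U)=\tfrac12\|\U\|_{\mathbb{U}}^2$ and every term of $\mathscr{J}$ in \eqref{cost} is quadratic, I would expand $\mathscr{J}(\u^*+\u,\U^*+\U)-\mathscr{J}(\u^*,\U^*)$ by means of $\tfrac12\|a+b\|^2-\tfrac12\|a\|^2=\langle a,b\rangle+\tfrac12\|b\|^2$, splitting it into the purely quadratic part
\begin{align*}
\tfrac12\int_0^T\big(\|\u(t)\|_{\H}^2+\|\nabla\u(t)\|_{\H}^2+\|\U(t)\|_{\mathbb{U}}^2\big)\d t+\tfrac12\|\u(T)\|_{\H}^2
\end{align*}
and a part linear in $(\u,\U)$, namely $\int_0^T\langle(\u^*-\u_d)-\Delta\u^*,\u\rangle\d t+\int_0^T(\U^*,\U)_{\mathbb{U}}\d t+\langle\u^*(T)-\u^f,\u(T)\rangle$. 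The heart of the argument is to rewrite this linear part: I would recognise $(\u^*-\u_d)-\Delta\u^*$ as the right-hand side of the adjoint system \eqref{adjp}, test \eqref{adjp} against $\u$, and integrate by parts in time, so that the terminal value $\p(T)=\u^*(T)-\u^f$ cancels the terminal tracking contribution and the boundary term at $0$ vanishes since $\u(0)=\mathbf{0}$. Substituting the equation for $\u$ derived above, and using the self-adjointness of $\A$, the defining identity $\langle(\B'(\u^*))^*\p,\u\rangle=\langle\p,\B'(\u^*)\u\rangle$, and the symmetry of $\mathcal{C}'(\u^*)$ visible from \eqref{29}, the linear part collapses to $\int_0^T\langle\D\U,\p\rangle\d t+\int_0^T(\U^*,\U)_{\mathbb{U}}\d t-\int_0^T\langle\B(\u,\u),\p\rangle\d t-\beta\int_0^T\langle 2(\u^*\cdot\u)\u+|\u|^2(\u+\u^*),\p\rangle\d t$.

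Finally I would invoke the first order optimality condition of Theorem \ref{main} for $h(\U)=\tfrac12\|\U\|_{\mathbb{U}}^2$, that is $\U^*=-\D^*\p$ (Remark \ref{rem4.4}), which turns $\int_0^T\langle\D\U,\p\rangle\d t$ into $-\int_0^T(\U^*,\U)_{\mathbb{U}}\d t$ and hence cancels the control cross term $\int_0^T(\U^*,\U)_{\mathbb{U}}\d t$. What survives is exactly the quadratic part together with the two remainder pairings, and the optimality $\mathscr{J}(\u^*+\u,\U^*+\U)\ge\mathscr{J}(\u^*,\U^*)$ then yields \eqref{417}. I expect the main obstacle to be the rigorous justification of the time integration by parts and of every duality bracket under the low regularity available for $r=3$: one must work in the sum/intersection duality, with $\u,\p\in\mathrm{L}^2(0,T;\V)\cap\mathrm{L}^4(0,T;\wi\L^4)$ and $\partial_t\u,\partial_t\p\in\mathrm{L}^2(0,T;\V')+\mathrm{L}^{4/3}(0,T;\wi\L^{4/3})$ from Theorems \ref{main2}, \ref{thm3.4} and \ref{linear}, and verify the finiteness of the cubic terms by H\"older's inequality, for instance $\int_0^T|\langle\B(\u,\u),\p\rangle|\d t\leq\int_0^T\|\u\|_{\wi\L^4}^2\|\p\|_{\V}\d t<\infty$, the remaining cubic bracket being handled analogously.
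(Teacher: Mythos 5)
Your proposal is correct and takes essentially the same route as the paper's own proof: both pair the difference equation for $\u$ with the adjoint state $\p$ (you test \eqref{adjp} against $\u$, the paper tests the state-difference equation against $\p$ --- the identical duality computation after integration by parts in time, using $\u(0)=\mathbf{0}$), both cancel the control cross terms via $\U^*=-\D^*\p$, and both identify the quadratic and cubic remainders as $\B(\u,\u)$ and $2(\u^*\cdot\u)\u+|\u|^2(\u+\u^*)$, exactly as in \eqref{511} and \eqref{416}. The only differences are organizational (you expand the Taylor remainder of $\mathcal{C}$ at the outset, exploiting that it is exact for the cubic nonlinearity, while the paper keeps it abstract until the end), plus your welcome extra attention to justifying the dualities under the $r=3$ regularity.
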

\begin{proof}
	For any $(\u,\U)\in \mathcal{M}_{\u^*,\U^*},$ by the definition given in \eqref{def Q}, there exists $(\z,\W) \in \mathcal{A}_{\mathrm{ad}}$ such that $(\u,\U)= (\z - \u^*,\W - \U^*)$. Thus, from \eqref{2.18}, we can derive that $(\u,\U)$ satisfies the following system:
	\begin{eqnarray}\label{111}
	\left\{
	\begin{aligned}
\partial_t\u(t)+\mu\A\mathbf{u}(t)+\B(\mathbf{u}(t)+\u^*(t))-\B(\u^*(t))&+\alpha\u(t)\\+\beta(\mathcal{C}(\u(t)+\u^*(t))-\mathcal{C}(\u^*(t)))&=\D\U(t),\ \text{ in }\ \V',\\
	\mathbf{u}(0)&=\mathbf{0}.
	\end{aligned}
	\right.
	\end{eqnarray}
	a.e. $t\in[0,T]$. 	Taking the inner product with $\p(\cdot)$ to the first equation in \eqref{111} and then integrating over $[0,T]$,  we obtain
	\begin{align}
	&\int_0^T \langle\partial_t{\u}(t) +\mu\A\mathbf{u}(t)+ \B'(\u^*(t))\u(t)+\alpha\u(t)+\beta\mathcal{C}'(\u^*(t))\u(t)-\D\U(t),\p (t)\rangle \d t \nonumber\\&\quad+\int_0^T \langle\B(\mathbf{u}(t)+\u^*(t))-\B(\u^*(t))-\B'(\u^*(t))\u(t),\p (t)\rangle \d t \nonumber\\&\quad+\beta \int_0^T (\mathcal{C}(\u(t)+\u^*(t))-\mathcal{C}(\u^*(t))-\mathcal{C}'(\u^*(t))\u(t),\p(t)) \d t =0 .
	\end{align}
	Using an integration by parts, we infer that 
	\begin{align}\label{3.18}
	&(\u(T),\p(T))+\int_0^T \langle\u(t), -{\p}_t(t) +\mu\A\mathbf{p}(t)+ (\B'(\u^*(t)))^*\p(t)+\alpha\p(t)+\beta\mathcal{C}'(\u^*(t))\p(t)\rangle  \d t\nonumber\\&\quad+\int_0^T  \langle\B(\mathbf{u}(t)+\u^*(t))-\B(\u^*(t))-\B'(\u^*(t))\u(t)-\D\U(t),\p (t)\rangle \d t \nonumber\\&\quad+\beta \int_0^T \langle\mathcal{C}(\u(t)+\u^*(t))-\mathcal{C}(\u^*(t))-\mathcal{C}'(\u^*(t))\u(t),\p(t)\rangle  \d t =0 .
	\end{align}
	Since $(\u^*,\U^*)$ is an optimal pair, we know that $(\u^*,\U^*)$  satisfies the first order necessary condition
	\begin{align}\label{410}
\U^*(t)=-\D^*\p(t), \ \text{ for a.e. }\ t\in[0,T]. 
	\end{align}
	Using  \eqref{410} and \eqref{adjp} in \eqref{3.18}, we find
	\begin{align}\label{5.32}
	&(\u(T),\u^*(T)-\u^f)+ \int_0^T ( \U^*(t),\U(t))_{\mathbb{U}}\d t +\int_0^T \langle\u^*(t)-\u_d(t)-\Delta\u^*(t),\u(t)\rangle \d t
\nonumber\\&\quad+\int_0^T  \langle\B(\mathbf{u}(t)+\u^*(t))-\B(\u^*(t))-\B'(\u^*(t))\u(t),\p (t)\rangle \d t \nonumber\\&\quad+\beta \int_0^T \langle\mathcal{C}(\u(t)+\u^*(t))-\mathcal{C}(\u^*(t))-\mathcal{C}'(\u^*(t))\u(t),\p(t)\rangle \d t =0 .
	\end{align}
	Since $(\u,\U) \in \mathcal{M}_{\u^*,\U^*}$, by the definition \eqref{def Q}, we know that  $(\u+\u^*,\U+\U^*)$ is a feasible pair for the problem \eqref{control problem}. Remembering that  $(\u^*,\U^*)$ is an optimal pair, from \eqref{cost}, we also obtain 
	\begin{align}\label{412}
	&\mathscr{J}(\u+\u^*,\U+\U^*) - \mathscr{J}(\u^*,\U^*) \nonumber\\&=\frac{1}{2} \int_0^T [ \|\u(t)+\u^*(t)-\u_d(t)\|_{\H}^2+\|\nabla(\u(t)+\u^*(t))\|_{\H}^2] \d t+\frac{1}{2}\int_0^T\|\U(t)+\U^*(t)\|_{\mathbb{U}}^2\d t\nonumber\\&\quad+\frac{1}{2}\|\u(T)+\u^*(T)-\u^f\|_{\H}^2  - \frac{1}{2} \int_0^T [ \|\u^*(t)-\u_d(t)\|_{\H}^2+\|\nabla\u^*(t)\|_{\H}^2] \d t-\frac{1}{2}\int_0^T\|\U^*(t)\|_{\mathbb{U}}^2\d t\nonumber\\&\quad-\frac{1}{2}\|\u^*(T)-\u^f\|_{\H}^2 \geq 0.
	\end{align} 
Thus, it is immediate that 
	\begin{align}
	&\int_0^T \left[( \u^*(t)-\u_d(t),\u(t) )+ \frac{1}{2}\|\u(t)\|_{\H}^2+\langle-\Delta\u^*(t),\u(t)\rangle+\frac{1}{2}\|\nabla\u(t)\|_{\H}^2\right] \d t   \nonumber \\	&\quad +\int_0^T \left[(\U^*(t),\U(t) )_{\mathbb{U}} + \frac{1}{2}\|\U(t)\|^2_{\mathbb{U}}\right] \d t+(\u^*(T)-\u^f,\u(T))+\frac{1}{2}\|\u(T)\|_{\H}^2 \geq 0.
	\end{align}
	From \eqref{5.32}, it follows that
	\begin{align}\label{415}
	&\frac{1}{2}\left[\int_0^T\left(\|\u(t)\|_{\H}^2+\|\nabla\u(t)\|_{\H}^2+\|\U(t)\|_{\mathbb{U}}^2\right)\d t \right]+\frac{1}{2}\|\u(T)\|_{\H}^2\nonumber \\ 
	& \quad-\int_0^T  \langle\B(\mathbf{u}(t)+\u^*(t))-\B(\u^*(t))-\B'(\u^*(t))\u(t),\p (t)\rangle \d t \nonumber\\&\quad-\beta \int_0^T \langle\mathcal{C}(\u(t)+\u^*(t))-\mathcal{C}(\u^*(t))-\mathcal{C}'(\u^*(t))\u(t),\p(t)\rangle \d t \geq 0. 
	\end{align}
	But, we know that 
	\begin{align}\label{511}
	 \langle\B(\mathbf{u}+\u^*)-\B(\u^*)-\B'(\u^*)\u,\p \rangle&=\langle\B(\u,\u)+\B(\u,\u^*)+\B(\u^*,\u)+\B(\u^*,\u^*),\p\rangle\nonumber\\&\quad-\langle\B(\u^*,\u^*),\p\rangle -\langle\B(\u^*,\u)-\B(\u,\u^*),\p\rangle \nonumber\\&=\langle\B(\u,\u),\p\rangle. 
	\end{align}
	Using Taylor's series expansion (see Theorem 7.9.1, \cite{PGC}), we have (see Remark \ref{rem2.2})
	\begin{align}\label{416}
&\langle\mathcal{C}(\u+\u^*)-\mathcal{C}(\u^*)-\mathcal{C}'(\u^*)\u,\p\rangle\nonumber\\&= \frac{1}{2}\left<\mathcal{C}''(\u^*)(\u\otimes\u),\p\right>+\frac{1}{2}\left<\int_0^1(1-\theta)^{2}(\mathcal{C}'''(\u^*+\theta\u)(\u\otimes\u\otimes\u))\d\theta,\p\right>\nonumber\\&=2\langle(\u^*\cdot\u)\u,\p\rangle +\langle|\u|^2\u^*,\p\rangle+\langle|\u|^2\u,\p\rangle.
	\end{align}
	Using \eqref{511} and \eqref{416} in \eqref{415}, we finally get \eqref{417}.
\end{proof}

\begin{remark}
For $r=1$, the condition \eqref{417} becomes 
		\begin{align}
	&\frac{1}{2}\left[\int_0^T\left(\|\u(t)\|_{\H}^2+\|\nabla\u(t)\|_{\H}^2+\|\U(t)\|_{\mathbb{U}}^2\right)\d t \right]+\frac{1}{2}\|\u(T)\|_{\H}^2-\int_0^T\langle\B(\u(t),\u(t)),\p(t)\rangle\d t\geq 0,
	\end{align}
	which is same as the case for the optimal control problems governed by the 2D Navier-Stokes equations (cf. \cite{LijuanPezije}). 
\end{remark}

\begin{theorem}[Sufficient condition]\label{sufficient}
	Let  $(\u^*,\U^*)$ be a feasible pair for the problem \eqref{control problem} and assume that the first order necessary condition holds (see  \eqref{410}). Let us also assume that for any $(\u,\U) \in \mathcal{M}_{\u^*,\U^*},$ the  inequality \eqref{417} holds. 
	Then $(\u^*,\U^*)$ is an optimal pair for the problem \eqref{control problem}.
\end{theorem}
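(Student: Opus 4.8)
The plan is to prove the sufficient condition by reversing the computation that established the necessary condition, exploiting the fact that the cost functional has a very particular polynomial structure. The key observation is that the inequality \eqref{417} is precisely the ``remainder'' term that appears when one expands the difference $\mathscr{J}(\u^*+\u,\U^*+\U)-\mathscr{J}(\u^*,\U^*)$ for an arbitrary feasible perturbation $(\u,\U)\in\mathcal{M}_{\u^*,\U^*}$ and uses the first order condition \eqref{410} to cancel the linear terms.

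First I would take an arbitrary feasible pair $(\z,\W)\in\mathcal{A}_{\mathrm{ad}}$ and set $(\u,\U)=(\z-\u^*,\W-\U^*)$, so that $(\u,\U)\in\mathcal{M}_{\u^*,\U^*}$ and $(\u,\U)$ solves the system \eqref{111}. The goal is to show $\mathscr{J}(\z,\W)\geq\mathscr{J}(\u^*,\U^*)$, which is exactly $\mathscr{J}(\u^*+\u,\U^*+\U)-\mathscr{J}(\u^*,\U^*)\geq 0$. Expanding the quadratic cost functional \eqref{cost} with $h(\U)=\frac12\|\U\|_{\mathbb{U}}^2$ gives, as in \eqref{412}, the exact decomposition
\begin{align*}
\mathscr{J}(\u^*+\u,\U^*+\U)-\mathscr{J}(\u^*,\U^*)&=\int_0^T\big[(\u^*-\u_d,\u)+\langle-\Delta\u^*,\u\rangle+(\U^*,\U)_{\mathbb{U}}\big]\d t+(\u^*(T)-\u^f,\u(T))\\
&\quad+\frac{1}{2}\int_0^T\big(\|\u\|_{\H}^2+\|\nabla\u\|_{\H}^2+\|\U\|_{\mathbb{U}}^2\big)\d t+\frac{1}{2}\|\u(T)\|_{\H}^2.
\end{align*}
This is an \emph{identity}, with no neglected higher-order terms, precisely because $\mathscr{J}$ is quadratic in the state and control.

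Next I would rewrite the linear-in-$(\u,\U)$ part by testing the adjoint system \eqref{adjp} against $\u(\cdot)$ and integrating by parts in time exactly as in \eqref{3.18}--\eqref{5.32}; invoking the first order necessary condition $\U^*=-\D^*\p$ from \eqref{410} converts the entire linear part into the two nonlinear remainder integrals, yielding
\begin{align*}
\int_0^T\big[(\u^*-\u_d,\u)+\langle-\Delta\u^*,\u\rangle+(\U^*,\U)_{\mathbb{U}}\big]\d t+(\u^*(T)-\u^f,\u(T))&=-\int_0^T\langle\B(\u,\u),\p\rangle\d t\\
&\quad-\beta\int_0^T\langle 2(\u^*\cdot\u)\u+|\u|^2(\u+\u^*),\p\rangle\d t,
\end{align*}
where I have used the algebraic identities \eqref{511} and \eqref{416}. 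Substituting this into the expansion above shows that $\mathscr{J}(\u^*+\u,\U^*+\U)-\mathscr{J}(\u^*,\U^*)$ equals exactly the left-hand side of \eqref{417}, which is assumed to be nonnegative. Since $(\z,\W)$ was an arbitrary feasible pair, this proves $(\u^*,\U^*)$ is optimal.

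The reason this direction is essentially free of obstacles is that the functional is quadratic, so no genuine second-order Taylor remainder analysis is needed; the work is bookkeeping to verify that the same algebraic identities \eqref{511} and \eqref{416} and the same integration by parts used in Theorem \ref{necessary} run verbatim. \textbf{The one point requiring care}, and the only genuine obstacle, is justifying the integration by parts in time against the adjoint and the use of the identities \eqref{511}--\eqref{416}: these require that $(\u,\U)\in\mathcal{M}_{\u^*,\U^*}$ genuinely solves \eqref{111} with $\u\in\mathrm{C}([0,T];\H)\cap\mathrm{L}^2(0,T;\V)\cap\mathrm{L}^4(0,T;\wi\L^4)$ (valid for $r=3$) so that the cubic terms $\langle\B(\u,\u),\p\rangle$ and the $\mathcal{C}$-remainder are integrable; this integrability is exactly what the $r=3$ regularity from Theorem \ref{main2} and the bound \eqref{40} on $\p$ provide, and it is where the restriction to $r=3$ is used.
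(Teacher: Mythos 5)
Your proposal is correct and follows essentially the same route as the paper's proof: an exact quadratic expansion of $\mathscr{J}(\z,\W)-\mathscr{J}(\u^*,\U^*)$, the duality pairing between the perturbation equation \eqref{111}/\eqref{1111} and the adjoint state $\p$ (your phrasing ``test the adjoint against $\u$'' is the same computation after the integration by parts), the first order condition \eqref{410} to eliminate the control term, and the identities \eqref{511} and \eqref{416} to reduce the linear part to the two nonlinear remainder integrals, so that the cost difference equals exactly the left-hand side of \eqref{417}. Your closing remark on the $\mathrm{L}^4(0,T;\wi\L^4)$ integrability needed to justify the pairings for $r=3$ is a point the paper passes over silently, and it is correctly identified.
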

\begin{proof}
	For any $(\z,\W) \in \mathcal{A}_{\mathrm{ad}}$, by the definition of \eqref{def Q}, we know that $(\z - \u^*,\W - \U^*) \in \mathcal{M}_{\u^*,\U^*}$ and  it satisfies:
	\begin{eqnarray}\label{1111}
	\left\{
	\begin{aligned}
	\partial_t (\z (t)- \u^*(t))+\mu\A(\z(t) - \u^*(t))+\B(\z(t)) - \B(\u^*(t))&+\alpha(\z(t)-\u^*(t))\\ +\beta(\mathcal{C}(\z(t))-\mathcal{C}(\u^*(t)))&=\D(\W(t)-\U^*(t)),\ \text{ in }\ \V',\\
	\z(0) - \u^*(0)&=\mathbf{0}.
	\end{aligned}
	\right.
	\end{eqnarray}
	Taking the inner product with $\p(\cdot)$ to the first two equations in \eqref{1111}, integrating over $[0,T]$ and then performing an integration by parts, we get  
	\begin{align}\label{419}
	& \int_0^T \langle\z(t)-\u^*(t), -\partial_t\p(t) +\mu\A\mathbf{p}(t)+ \B'(\u^*(t))\p(t)+\alpha\p(t)+\beta\mathcal{C}'(\u^*(t))\p(t)\rangle \d t \nonumber\\&\quad +\int_0^T\langle\B(\z(t))- \B(\u^*(t))- \B'(\u^*(t))(\z(t)-\u^*(t))-(\W(t)-\U^*(t),\D^*\p(t))_{\mathbb{U}}\rangle\d t\nonumber\\&\quad+\beta\int_0^T\langle\mathcal{C}(\z(t))-\mathcal{C}(\u^*(t))-\mathcal{C}'(\u^*(t))(\z(t)-\u^*(t)),\p(t)\rangle\d t+(\z(T)-\u^*(T),\p(T)) =0.
	\end{align}
A calculation similar to \eqref{511} yields 
	\begin{align}\label{420a}
\langle\B(\z)- \B(\u^*)- \B'(\u^*)(\z-\u^*),\p\rangle=\langle\B(\z-\u^*,\z-\u^*),\p\rangle. 
	\end{align}
	Using Taylor's formula (see \eqref{416}), we obtain 
	\begin{align}
	&\beta\langle\mathcal{C}(\z)-\mathcal{C}(\u^*)-\mathcal{C}'(\u^*)(\z-\u^*),\p\rangle\nonumber\\&= \frac{\beta}{2}\left<\mathcal{C}''(\u^*)((\z-\u^*)\otimes(\z-\u^*)),\p\right>\nonumber\\&\quad+\frac{\beta}{2}\left<\int_0^1(1-\theta)^{2}(\mathcal{C}'''(\u^*+\theta(\z-\u^*))((\z-\u^*)\otimes(\z-\u^*)\otimes(\z-\u^*)))\d\theta,\p\right>\nonumber\\&=2\beta\langle(\u^*\cdot(\z-\u^*))(\z-\u^*),\p\rangle +\beta\langle|\z-\u^*|^2\u^*,\p\rangle+\beta\langle|\z-\u^*|^2(\z-\u^*),\p\rangle.
	\end{align}
	Note that $(\u^*,\U^*)$ satisfies the first order necessary condition 
	$$\int_0^T(\U(t),\D^*\p(t))_{\mathbb{U}}\d t+\int_0^T(\U(t),\U^*(t))_{\mathbb{U}}\d t=0, \ \text{ for all }\ \U\in\mathrm{L}^2(0,T;\mathbb{U}).$$ 
	 It is true for any $\U\in\mathscr{U}_{\mathrm{ad}}$, and in particular for $\W-\U^*\in\mathscr{U}_{\mathrm{ad}}$, that is, we have 
	\begin{align}\label{420}
	\int_0^T(\W(t)-\U^*(t),\D^*\p(t) )_{\mathbb{U}}\d t= 	-\int_0^T(\W(t)-\U^*(t),\U^*(t))_{\mathbb{U}}\d t.
	\end{align}
	Using the adjoint system \eqref{adjp} and \eqref{420a}-\eqref{420} in \eqref{419}, we further find 
	\begin{align}\label{5.35}
	& \int_0^T \langle\z(t)-\u^*(t),(\u^*(t)-\u_d(t))-\Delta\u^*(t)\rangle \d t + \int_0^T (\W(t) - \U^*(t),\U^*(t) )_{\mathbb{U}} \d t \nonumber\\&\quad+(\z(T)-\u^*(T),\u^*(T)-\u^f)  +\int_0^T(\langle\B(\z(t)-\u^*(t),\z(t)-\u^*(t)),\p(t)\rangle\d t\nonumber\\&\quad+\beta\int_0^T2\langle(\u^*(t)\cdot(\z(t)-\u^*(t)))(\z(t)-\u^*(t)),\p\rangle +\langle|\z(t)-\u^*(t)|^2\z(t),\p(t)\rangle\d t\nonumber\\&=0.
	\end{align} 
	An easy calculation yields 
	\begin{align}\label{3.25}
	&\frac{1}{2}\int_0^T \left[\|\z(t)-\u_d(t)\|_{\H}^2+\|\nabla\z(t)\|_{\H}^2+\|\W(t)\|_{\H}^2\right] \d t+\frac{1}{2}\|\z(T)-\u^f\|_{\H}^2 \nonumber\\&\quad- \frac{1}{2} \int_0^T \left[ \|\u^*(t)-\u_d(t)\|_{\H}^2+\|\nabla\u^*(t)\|_{\H}^2+\|\U^*(t)\|_{\H}^2\right] \d t-\frac{1}{2}\|\u^*(T)-\u^f\|_{\H}^2\nonumber \\	&= \int_0^T \left[(\u^*(t)-\u_d(t),\z(t)-\u^*(t) ) + \frac{1}{2}\|\z(t)-\u^*(t)\|_{\H}^2\right] \d t\nonumber\\&\quad+\int_0^T\left[\langle-\Delta\u^*(t),\z(t)-\u^*(t)\rangle+\frac{1}{2}\|\nabla(\z(t)-\u^*(t))\|_{\H}^2\right]\d t \nonumber\\	&\quad +\int_0^T \left[( \U^*(t),\W(t)-\U^*(t))_{\mathbb{U}}+ \frac{1}{2}\|\W(t)-\U^*(t)\|_{\mathbb{U}}^2\right] \d t\nonumber\\&\quad +(\u^*(T)-\u^f(T),\z(T)-\u^*(T) ) + \frac{1}{2}\|\z(T)-\u^*(T)\|_{\H}^2.
	\end{align}
Using \eqref{5.35} in \eqref{3.25}, we get 
	\begin{align*}
	&\frac{1}{2}\int_0^T \left[\|\z(t)-\u_d(t)\|_{\H}^2+\|\nabla\z(t)\|_{\H}^2+\|\W(t)\|_{\H}^2\right] \d t+\frac{1}{2}\|\z(T)-\u^f\|_{\H}^2 \nonumber\\&\quad- \frac{1}{2} \int_0^T \left[ \|\u^*(t)-\u_d(t)\|_{\H}^2+\|\nabla\u^*(t)\|_{\H}^2+\|\U^*(t)\|_{\H}^2\right] \d t-\frac{1}{2}\|\u^*(T)-\u^f\|_{\H}^2\nonumber \\	&=\frac{1}{2} \int_0^T \left[\|\z(t)-\u^*(t)\|_{\H}^2+\|\nabla(\z(t)-\u^*(t))\|_{\H}^2+\|\W(t)-\U^*(t)\|_{\mathbb{U}}^2\right] \d t\nonumber\\&\quad+ \frac{1}{2}\|\z(T)-\u^*(T)\|_{\H}^2-\int_0^T(\langle\B(\z(t)-\u^*(t),\z(t)-\u^*(t)),\p(t)\rangle\d t\nonumber\\&\quad+\beta\int_0^T2\langle(\u^*(t)\cdot(\z(t)-\u^*(t)))(\z(t)-\u^*(t)),\p\rangle +\langle|\z(t)-\u^*(t)|^2\z(t),\p(t)\rangle\d t\nonumber\\& \geq 0,
	\end{align*}
	where we used \eqref{417}. Therefore, for any $(\z,\W) \in \mathcal{A}_{\mathrm{ad}}$, we find that the following inequality holds:
	\begin{align*}
	&\frac{1}{2}\int_0^T \left[\|\z(t)-\u_d(t)\|_{\H}^2+\|\nabla\z(t)\|_{\H}^2+\|\W(t)\|_{\H}^2\right] \d t+\frac{1}{2}\|\z(T)-\u^f\|_{\H}^2 \nonumber\\&\geq  \frac{1}{2} \int_0^T \left[ \|\u^*(t)-\u_d(t)\|_{\H}^2+\|\nabla\u^*(t)\|_{\H}^2+\|\U^*(t)\|_{\H}^2\right] \d t+\frac{1}{2}\|\u^*(T)-\u^f\|_{\H}^2,
	\end{align*}
	which implies that the pair $(\u^*,\U^*)$ is an optimal pair for the problem \eqref{control problem}. 
\end{proof}
\begin{remark}
{	Note that the second order necessary and sufficient conditions of optimality (cf. \eqref{417}) is true for all $\alpha,\beta\geq 0$ and 2D CBF equations is the damped 2D NSE. Thus as $\alpha,\beta\to 0$, it coincides with the  second order necessary and sufficient conditions of optimality for the 2D NSE available in \cite{LijuanPezije}. }
\end{remark}

\section{Data Assimilation Problem}\label{sec6}\setcounter{equation}{0}  In this section, we consider a problem similar to that of the data assimilation problems of meteorology. In the data assimilation problems arising from meteorology, determining the accurate initial data for the future predictions is a challenging problem. This motivates us to consider a similar problem  for the 2D CBF equations.  We formulate an optimal data initialization problem, where we find the unknown optimal initial data by minimizing suitable cost functional  subject to the 2D CBF equations (see \cite{sritharan} for the case of the incompressible Navier-Stokes equations, \cite{BDM2} for the Cahn-Hillard-Navier-Stokes equations and \cite{MTM5} for the 2D Tidal dynamics equations).  

We formulate the initial data optimization problem as finding an optimal initial velocity  $\U\in\H$ such that $(\u, \U) $ satisfies the following system:
\begin{equation}\label{5p1}
\left\{
\begin{aligned}
\partial_t\u(t)+\mu\A\u(t)+\B(\u(t))+\alpha\u(t)+\beta\mathcal{C}(\u(t))&=\f(t),\ \text{ in }\ \V', \\
\u(0)&=\U,
\end{aligned}
\right.
\end{equation}
a.e. $t\in[0,T]$,  and minimizes the cost functional 
\begin{equation}\label{cost4}
\begin{aligned}
\mathscr{J}(\u,\U)&:=  \frac{1}{2} \|\U\|^2_{\H}+\frac{1}{2} \int_0^T \|\u(t) - \u_M(t)\|^2_{\H} \d t+\frac{1}{2}\int_0^T\|\nabla\times\u(t)\|_{\H}^2\d t +\frac{1}{2}\|\u(T)-\u_M^f\|^2_{\H},
\end{aligned}
\end{equation}
where $\u_M$ is the measured velocity of the fluid and $\u_M^f$ is the measured final velocity at time $T$. We assume that 
\begin{align}\label{um}\u_M\in\mathrm{L}^2(0,T;\H),\ \u_M^f\in\H.\end{align}
In this context, we take the set of admissible control class, $\mathscr{U}_{\mathrm{ad}}=\H$.  Also, the  \emph{admissible class} $\mathscr{A}_{\mathrm{ad}}$ consists of all pairs $(\u,\U)$ such that the set of states $\u(\cdot)$ is a unique weak solution of the 2D CBF equations \eqref{5p1} with the control $\U \in \mathscr{U}_{ad} $.  We formulate the optimal control problem as:
\begin{align} \label{IOCP}\tag{$\mathrm{P}_0$}
\min_{(\u,\U) \in \mathscr{A}_{\mathrm{ad}} } \mathscr{J}(\u,\U).
\end{align}

The next theorem gives the existence of  an optimal pair $(\u^*,\U^*)$ for the problem \eqref{IOCP}. A proof of the Theorem can be established in a similar way as that of the Theorem \ref{optimal}.
\begin{theorem}[Existence of an optimal pair]\label{optimal1}
	Let $\f\in\mathrm{L}^2(0,T;\V')$ be given. Then there exists at least one pair  $(\u^*,\U^*)\in\mathscr{A}_{\mathrm{ad}}$  such that the functional $ \mathscr{J}(\u,\U)$ given in \eqref{cost4} attains its minimum at $(\u^*,\U^*)$, where $\u^*(\cdot)$ is the unique weak solution of the system \eqref{5p1}  with the initial data control $\U^*\in\mathscr{U}_{\mathrm{ad}}$.
\end{theorem}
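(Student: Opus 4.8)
The plan is to apply the direct method of the calculus of variations, following the proof of Theorem \ref{optimal} verbatim in its structure, with the one essential modification that the control now enters through the initial datum $\u(0)=\U\in\H$ rather than as a distributed forcing $\D\U$. First I would set $\mathscr{J}:=\inf_{(\u,\U)\in\mathscr{A}_{\mathrm{ad}}}\mathscr{J}(\u,\U)$, note that $0\leq\mathscr{J}<\infty$, and choose a minimizing sequence $\{(\u_n,\U_n)\}\subset\mathscr{A}_{\mathrm{ad}}$ with $\U_n\in\H$, $\u_n(0)=\U_n$, and $\mathscr{J}\leq\mathscr{J}(\u_n,\U_n)\leq\mathscr{J}+\frac1n$. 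The coercivity now comes from the term $\frac12\|\U\|_{\H}^2$ in \eqref{cost4}: the bound on $\mathscr{J}(\u_n,\U_n)$ immediately gives $\sup_n\|\U_n\|_{\H}^2<\infty$, so $\{\U_n\}$ is bounded in $\H$. Feeding this into the energy inequality \eqref{p36} (here with $\D\U\equiv\mathbf{0}$, the initial value $\U_n$ playing the role of $\u_0$) yields uniform bounds on $\{\u_n\}$ in $\mathrm{L}^{\infty}(0,T;\H)\cap\mathrm{L}^2(0,T;\V)\cap\mathrm{L}^{r+1}(0,T;\wi\L^{r+1})$, and the time-derivative estimates used in Theorem \ref{optimal} show $\partial_t\u_n$ is bounded in $\mathrm{L}^p(0,T;\V')$ with $p=2$ for $r=2$ and $p=4/3$ for $r=3$.

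Next I would extract, via the Banach-Alaoglu theorem, a subsequence $\{(\u_{n_k},\U_{n_k})\}$ enjoying the convergences of \eqref{conv} for $\u_{n_k}$ together with $\U_{n_k}\rightharpoonup\U^*$ in $\H$; the Aubin-Lions compactness theorem then gives $\u_{n_k}\to\u^*$ strongly in $\mathrm{L}^2(0,T;\H)$ and a.e. in $[0,T]\times\mathcal{O}$. These are precisely the convergences needed to pass to the limit in the nonlinear terms $\B(\u_{n_k})$ and $\mathcal{C}(\u_{n_k})$, so that $\u^*$ solves the equation in \eqref{5p1} weakly; the uniqueness asserted in Theorem \ref{main2} then forces the whole sequence to converge. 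Moreover $\u^*\in\C([0,T];\H)$, so its initial and terminal traces are well defined.

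The only genuinely new point — and the step I expect to be the main obstacle — is the identification of the initial condition $\u^*(0)=\U^*$. In Theorem \ref{optimal} the initial datum was fixed, so this was automatic; here it is the control and we have only the \emph{weak} convergence $\U_{n_k}\rightharpoonup\U^*$ in $\H$. To handle it I would test the weak formulation of \eqref{5p1} against $\v\varphi(\cdot)$ with $\v\in\V$ and $\varphi\in\C^1([0,T])$, $\varphi(T)=0$, $\varphi(0)=1$, integrate by parts in time to expose the boundary term $-(\u_{n_k}(0),\v)\varphi(0)=-(\U_{n_k},\v)$, and let $k\to\infty$: the interior terms converge by the established convergences, while $(\U_{n_k},\v)\to(\U^*,\v)$ by the weak convergence in $\H$. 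Comparison with the corresponding weak formulation satisfied by $\u^*$ gives $(\u^*(0),\v)=(\U^*,\v)$ for all $\v\in\V$, hence $\u^*(0)=\U^*$ and $(\u^*,\U^*)\in\mathscr{A}_{\mathrm{ad}}$. The same integration-by-parts argument, now with $\varphi(0)=0$, $\varphi(T)=1$, yields the weak convergence $\u_{n_k}(T)\rightharpoonup\u^*(T)$ in $\H$ needed below.

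Finally I would show $(\u^*,\U^*)$ is a minimizer through weak lower semicontinuity of $\mathscr{J}$. The running-cost term $\frac12\int_0^T\|\u-\u_M\|_{\H}^2\d t$ converges by the strong convergence in $\mathrm{L}^2(0,T;\H)$; the enstrophy term $\frac12\int_0^T\|\nabla\u\|_{\H}^2\d t=\frac12\|\u\|_{\mathrm{L}^2(0,T;\V)}^2$ and the control term $\frac12\|\U\|_{\H}^2$ are weakly lower semicontinuous because norms are, using $\u_{n_k}\rightharpoonup\u^*$ in $\mathrm{L}^2(0,T;\V)$ and $\U_{n_k}\rightharpoonup\U^*$ in $\H$; and the terminal term $\frac12\|\u(T)-\u_M^f\|_{\H}^2$ is weakly lower semicontinuous via $\u_{n_k}(T)\rightharpoonup\u^*(T)$ in $\H$. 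Collecting these estimates gives $\mathscr{J}(\u^*,\U^*)\leq\liminf_{k\to\infty}\mathscr{J}(\u_{n_k},\U_{n_k})=\mathscr{J}$, so $(\u^*,\U^*)$ attains the minimum and is the desired optimal pair.
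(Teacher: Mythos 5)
Your proposal is correct and takes essentially the same route as the paper, which itself only states that the result "can be established in a similar way as that of Theorem \ref{optimal}": direct method with a minimizing sequence, coercivity now supplied by the $\frac{1}{2}\|\U\|_{\H}^2$ term, the energy estimate \eqref{p36} with the initial datum as control, Banach--Alaoglu and Aubin--Lions compactness, passage to the limit in the nonlinear terms, and weak lower semicontinuity of each term of \eqref{cost4}. Your treatment of the one genuinely new point — identifying $\u^*(0)=\U^*$ and obtaining $\u_{n_k}(T)\rightharpoonup\u^*(T)$ in $\H$ via integration by parts in time against $\v\varphi$ — is exactly the detail the paper leaves implicit, and it is handled correctly.
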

As  in the proof of the Theorem \ref{main}, the first order necessary conditions of optimality for this control problem also. Since $\mathscr{U}_{\mathrm{ad}}=\H$,  the optimal control is given by $\U^*=-\p(0)$, where $\p(\cdot)$ is the unique weak solution of the following adjoint system:  
	\begin{eqnarray}\label{adja}
\left\{
\begin{aligned}
-\partial_t\p(t)&+\mu\A\p(t)+(\B'(\u^*(t)))^*\p (t)+\alpha\p(t)+\beta\mathcal{C}'(\u^*(t))\p(t)\\&=(\u^*(t)-\u_M(t))-\Delta\u(t),\ \text{ in }\ \V', \\ \p(T)&=\u^*(T)-\u_M^f, 
\end{aligned}
\right.
\end{eqnarray} 
a.e. $t\in[0,T]$. A similar calculation as in the proof of Theorem  \ref{thm3.4} provides the existence of a unique weak solution to the system (\ref{adja}) such that  \begin{align}\label{348}\p\in \C([0,T];\H)\cap\mathrm{L}^2(0,T;\V).\end{align}
Using the continuity of $\p(\cdot)$ in time  at $t=0$ in $\H$, we infer that $\p(0)\in\H$ and hence we obtain  $\U^*=-\p(0)\in\mathscr{U}_{\mathrm{ad}}=\H.$
Thus, we have the following Theorem:

\begin{theorem}[Optimal initial control]\label{data}
	Let $(\u^*,\U^*)\in\mathscr{A}_{\mathrm{ad}}$ be an optimal pair. Then there exists a unique weak solution $\p(\cdot)$ of the adjoint system \eqref{adja} satisfying \eqref{348} such that the optimal control is obtained as \begin{align}\label{in}\U^*=-\p(0)\in\mathscr{U}_{\mathrm{ad}}=\H.\end{align}
\end{theorem}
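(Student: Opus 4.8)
The plan is to mirror the argument used for Theorem \ref{main}, adapting its three ingredients (adjoint solvability, differentiability of the control-to-state map, and a duality pairing) to the situation where the control is the \emph{initial datum} $\U\in\H$ rather than a distributed forcing. First I would record that the adjoint system \eqref{adja} is exactly of the form covered by Theorem \ref{thm3.4}: its forcing $(\u^*-\u_M)-\Delta\u^*$ lies in $\mathrm{L}^2(0,T;\V')$ because $\u^*\in\mathrm{L}^2(0,T;\V)$ and $\u_M\in\mathrm{L}^2(0,T;\H)$ by \eqref{um}, and its terminal datum $\u^*(T)-\u_M^f$ lies in $\H$. Hence Theorem \ref{thm3.4} yields a unique weak solution $\p$ with the regularity \eqref{348}, and in particular $\p\in\C([0,T];\H)$, so that the trace $\p(0)\in\H$ is well defined.

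Next I would establish the counterparts of Lemmas \ref{lem3.7} and \ref{lem3.8} for initial-data perturbations. Writing $\widetilde{\u}=\u_{\U^*+\tau\U}-\u_{\U^*}$, the difference now solves the homogeneous (unforced) CBF difference system with initial value $\widetilde{\u}(0)=\tau\U$; the energy method of Lemma \ref{lem3.7}, together with the monotonicity estimates \eqref{2p8}, \eqref{a215} and Gronwall's inequality, gives $\sup_{t}\|\widetilde{\u}(t)\|_{\H}^2+\mu\int_0^T\|\widetilde{\u}\|_{\V}^2\d t\leq C\tau^2\|\U\|_{\H}^2$. The relevant linearized system is
\begin{equation*}
\left\{
\begin{aligned}
\partial_t\mathfrak{w}+\mu\A\mathfrak{w}+\B'(\u^*)\mathfrak{w}+\alpha\mathfrak{w}+\beta\mathcal{C}'(\u^*)\mathfrak{w}&=\mathbf{0},\ \text{ in }\ \V',\\
\mathfrak{w}(0)&=\U,
\end{aligned}
\right.
\end{equation*}
which is the system \eqref{4} with $\g=\mathbf{0}$ and $\mathfrak{w}_0=\U$, so Theorem \ref{linear} provides its unique weak solution. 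The Gateaux differentiability $\lim_{\tau\downarrow0}\tau^{-1}\|\u_{\U^*+\tau\U}-\u_{\U^*}-\tau\mathfrak{w}\|=0$ is then proved as in Lemma \ref{lem3.8}, by writing the equation for $\y=\u_{\U^*+\tau\U}-\u_{\U^*}-\tau\mathfrak{w}$ (homogeneous, $\y(0)=\mathbf{0}$) and bounding the Taylor remainders of $\B$ and $\mathcal{C}$ through \eqref{3a30}/\eqref{3b30}, with the $O(\tau)$ stability estimate feeding the right-hand side.

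With differentiability in hand, I would compute the directional derivative of $\mathcal{G}(\U)=\mathscr{J}(\u_\U,\U)$ from \eqref{cost4}. Using $\|\nabla\times\u\|_{\H}=\|\nabla\u\|_{\H}$ and integration by parts in space, this gives
\begin{align*}
\mathcal{G}'(\U^*,\U)=(\U^*,\U)_{\H}+\int_0^T\langle(\u^*-\u_M)-\Delta\u^*,\mathfrak{w}\rangle\d t+(\u^*(T)-\u_M^f,\mathfrak{w}(T)).
\end{align*}
The crucial step is the duality identity: pairing the $\mathfrak{w}$-equation with $\p$ and the adjoint equation with $\mathfrak{w}$, and using the self-adjointness of $\A$ and of $\mathcal{C}'(\u^*)$ together with $\langle(\B'(\u^*))^*\p,\mathfrak{w}\rangle=\langle\B'(\u^*)\mathfrak{w},\p\rangle$, all spatial terms cancel and $\frac{\d}{\d t}(\mathfrak{w},\p)=-\langle(\u^*-\u_M)-\Delta\u^*,\mathfrak{w}\rangle$. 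Integrating over $[0,T]$ and inserting $\mathfrak{w}(0)=\U$, $\p(T)=\u^*(T)-\u_M^f$ yields $\int_0^T\langle(\u^*-\u_M)-\Delta\u^*,\mathfrak{w}\rangle\d t=(\U,\p(0))-(\mathfrak{w}(T),\u^*(T)-\u_M^f)$, whence $\mathcal{G}'(\U^*,\U)=(\U^*+\p(0),\U)_{\H}$.

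Finally, since $\mathscr{U}_{\mathrm{ad}}=\H$ is the whole space, optimality forces $\mathcal{G}'(\U^*,\U)\ge0$ for every $\U\in\H$ and hence, replacing $\U$ by $-\U$, $\mathcal{G}'(\U^*,\U)=0$ for all $\U\in\H$; thus $(\U^*+\p(0),\U)_{\H}=0$ for all $\U$, giving $\U^*=-\p(0)$, which is \eqref{in}, and $\p(0)\in\H$ as noted above places $\U^*$ in $\mathscr{U}_{\mathrm{ad}}$. The main obstacle is the differentiability lemma: one must verify that the Taylor-remainder estimates \eqref{3a30} and \eqref{3b30}, originally driven by a forcing perturbation, remain controllable when the perturbation instead enters through the initial datum. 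This is where the restriction to $r=3$ (and the exclusion of $r>3$, cf. Remark \ref{rem4.4}) is felt, since the cubic remainder must be absorbed using the $\mathrm{L}^4(0,T;\wi\L^4)$ stability of $\widetilde{\u}$.
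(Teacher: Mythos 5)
Your proposal is correct and follows essentially the same route as the paper, which itself proves Theorem \ref{data} by invoking the argument of Theorem \ref{main}: adjoint solvability via a calculation as in Theorem \ref{thm3.4}, the stability/differentiability lemmas (Lemmas \ref{lem3.7} and \ref{lem3.8}) adapted so that the perturbation enters through the initial datum (zero forcing, $\mathfrak{w}(0)=\U$), the duality pairing giving $\mathcal{G}'(\U^*,\U)=(\U^*+\p(0),\U)_{\H}$, and the continuity of $\p(\cdot)$ at $t=0$ to conclude $\U^*=-\p(0)\in\H$. Your write-up simply makes explicit the details the paper leaves to the reader; the only minor imprecision is the closing suggestion that the differentiability step restricts to $r=3$, whereas (as in Theorem \ref{main}) it works for $r=1,2,3$ and only $r>3$ is excluded, the case $r=3$ being the one needed for the \emph{second-order} conditions.
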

For $r=3$, the second order necessary and sufficient conditions of optimality can be obtained in a similar way as in the Theorems \ref{necessary} and \ref{sufficient}.
\begin{theorem}[Necessary condition]\label{thm6.3}
	Let  $(\u^*,\U^*)$ be an optimal pair for the problem \eqref{IOCP}. Then, for any $(\u,\U) \in \mathcal{M}_{\u^*,\U^*}$, where $\mathcal{M}_{\u^*,\U^*}:= \{(\u,\U) \in \mathcal{A}_{\mathrm{ad}} \} - \{(\u^*,\U^*)\},$ we have 
	\begin{align}\label{6.7}
	&\frac{1}{2}\left[\int_0^T\left(\|\u(t)\|_{\H}^2+\|\nabla\u(t)\|_{\H}^2\right)\d t \right]+\frac{1}{2}\|\U\|_{\H}^2+\frac{1}{2}\|\u(T)\|_{\H}^2-\int_0^T\langle\B(\u(t),\u(t)),\p(t)\rangle\d t\nonumber \\ &\quad-\beta\int_0^T\left<2(\u^*(t)\cdot\u(t))\u(t)+|\u(t)|^2(\u(t)+\u^*(t)),\p(t)\right>\d t\geq 0. 
	\end{align}
\end{theorem}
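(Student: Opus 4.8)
The plan is to mirror the argument of Theorem~\ref{necessary}, adapting it to the initial-data control structure of \eqref{IOCP}. First I would fix an arbitrary $(\u,\U)\in\mathcal{M}_{\u^*,\U^*}$ and write $(\u,\U)=(\z-\u^*,\W-\U^*)$ for some $(\z,\W)\in\mathcal{A}_{\mathrm{ad}}$. Subtracting the two copies of \eqref{5p1} satisfied by $\z$ and $\u^*$, the difference $\u$ solves the homogeneously forced perturbation system
\begin{equation*}
\left\{
\begin{aligned}
\partial_t\u(t)+\mu\A\u(t)+\B(\u(t)+\u^*(t))-\B(\u^*(t))&+\alpha\u(t)\\+\beta(\mathcal{C}(\u(t)+\u^*(t))-\mathcal{C}(\u^*(t)))&=\mathbf{0},\ \text{ in }\ \V',\\
\u(0)&=\U.
\end{aligned}
\right.
\end{equation*}
The decisive difference from the distributed problem is that here the difference state carries the \emph{nonzero} initial datum $\u(0)=\U$, rather than $\u(0)=\mathbf{0}$ as in \eqref{111}.

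Next I would take the duality pairing of this system with the adjoint state $\p(\cdot)$ solving \eqref{adja}, integrate over $[0,T]$, and integrate by parts in the time-derivative term. The boundary contributions are $(\u(T),\p(T))-(\u(0),\p(0))$; using the terminal condition $\p(T)=\u^*(T)-\u_M^f$ and, crucially, the first order optimality relation $\U^*=-\p(0)$ from \eqref{in}, the initial boundary term becomes $-(\u(0),\p(0))=-(\U,-\U^*)=(\U,\U^*)$. Substituting the adjoint equation \eqref{adja} for the remaining linear operator acting on $\p$ and splitting the nonlinearities exactly as in \eqref{511} and \eqref{416} (the cubic Taylor expansion of $\mathcal{C}$ at $r=3$ from Remark~\ref{rem2.2}), I obtain the identity
\begin{align*}
&(\u(T),\u^*(T)-\u_M^f)+(\U,\U^*)+\int_0^T\langle\u(t),(\u^*(t)-\u_M(t))-\Delta\u^*(t)\rangle\d t\\
&\quad=-\int_0^T\langle\B(\u(t),\u(t)),\p(t)\rangle\d t-\beta\int_0^T\left<2(\u^*(t)\cdot\u(t))\u(t)+|\u(t)|^2(\u(t)+\u^*(t)),\p(t)\right>\d t.
\end{align*}

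Finally, since $(\u^*,\U^*)$ is optimal and $(\u+\u^*,\U+\U^*)=(\z,\W)$ is admissible, I would invoke $\mathscr{J}(\u+\u^*,\U+\U^*)-\mathscr{J}(\u^*,\U^*)\geq 0$. Expanding each quadratic term of the cost \eqref{cost4}, the cross terms reproduce precisely the left-hand side of the displayed identity (the term $(\U,\U^*)$ now arising from expanding $\tfrac12\|\U+\U^*\|_{\H}^2$, in place of the distributed control cross term of Theorem~\ref{necessary}), while the purely quadratic remainders give $\tfrac12\|\U\|_{\H}^2+\tfrac12\int_0^T(\|\u\|_{\H}^2+\|\nabla\u\|_{\H}^2)\d t+\tfrac12\|\u(T)\|_{\H}^2$. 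Substituting the identity to cancel the linear part leaves exactly \eqref{6.7}. I expect the only genuinely new point—and the main place to be careful—to be the bookkeeping of the nonzero initial boundary term $-(\U,\p(0))$ and its matching with the cost cross term through $\U^*=-\p(0)$; everything else is formally identical to Theorem~\ref{necessary}, with the well-posedness of \eqref{adja} and the regularity \eqref{348} ensuring that all the pairings above are finite.
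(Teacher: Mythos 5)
Your proposal is correct and is exactly the argument the paper intends: the paper gives no separate proof of Theorem \ref{thm6.3}, stating only that it follows as in Theorems \ref{necessary} and \ref{sufficient}, and your adaptation supplies precisely the required details. In particular, you correctly identified the one genuinely new step---the nonzero boundary term $-(\U,\p(0))$ at $t=0$ arising from the integration by parts, which is converted into the cost cross term $(\U,\U^*)$ via the first order condition \eqref{in}---while the treatment of the nonlinear remainders via \eqref{511} and \eqref{416} carries over verbatim.
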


\begin{theorem}[Sufficient condition]\label{thm6.4}
	Let  $(\u^*,\U^*)$ be a feasible pair for the problem \eqref{IOCP} and assume that the first order necessary condition holds (see  \eqref{in}). Let us also assume that for any $(\u,\U) \in \mathcal{M}_{\u^*,\U^*},$ the  inequality \eqref{6.7} holds. 	Then $(\u^*,\U^*)$ is an optimal pair for the problem \eqref{IOCP}.
\end{theorem}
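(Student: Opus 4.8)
The plan is to mirror the argument of Theorem \ref{sufficient}, the essential modification being that the control now acts through the initial datum rather than as a distributed forcing. First I would fix an arbitrary feasible pair $(\z,\W)\in\mathcal{A}_{\mathrm{ad}}$ and set $\u=\z-\u^*$ and $\U=\W-\U^*$, so that $(\u,\U)\in\mathcal{M}_{\u^*,\U^*}$ by definition; subtracting the two copies of \eqref{5p1} shows that $\u(\cdot)$ solves a difference system with vanishing forcing in $\V'$ and with the initial condition $\u(0)=\U$. I would then take the inner product of this difference equation with the adjoint solution $\p(\cdot)$ of \eqref{adja} and integrate over $[0,T]$.

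The decisive step is the integration by parts in time. Because $\u(0)=\U\neq\mathbf{0}$ here (in contrast to Theorem \ref{sufficient}, where $\z(0)-\u^*(0)=\mathbf{0}$ removed this contribution), integration by parts produces two boundary terms, $(\u(T),\p(T))$ and $-(\u(0),\p(0))=-(\U,\p(0))$. Invoking the first order necessary condition \eqref{in}, namely $\p(0)=-\U^*$, the latter becomes $-(\U,\p(0))=(\U,\U^*)_{\H}=(\W-\U^*,\U^*)_{\H}$; this is precisely the term that will pair with the quadratic control cost $\frac{1}{2}\|\U\|_{\H}^2$ coming from \eqref{cost4}. Capturing this boundary term correctly, and recognizing it as the linear part of the initial-data cost, is the main point of the proof and the only genuine departure from the distributed case.

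With the boundary terms in hand, I would substitute the adjoint equation \eqref{adja} and reuse the two algebraic identities already established in the proof of Theorem \ref{necessary}: the bilinear cancellation (as in \eqref{420a}) $\langle\B(\z)-\B(\u^*)-\B'(\u^*)\u,\p\rangle=\langle\B(\u,\u),\p\rangle$, and the $r=3$ Taylor expansion of $\mathcal{C}$ (as in \eqref{416}) giving $\langle\mathcal{C}(\z)-\mathcal{C}(\u^*)-\mathcal{C}'(\u^*)\u,\p\rangle=2\langle(\u^*\cdot\u)\u,\p\rangle+\langle|\u|^2\u^*,\p\rangle+\langle|\u|^2\u,\p\rangle$. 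This produces the exact analogue of \eqref{5.35}, now carrying the boundary term $(\U,\U^*)_{\H}$ in place of the distributed term $\int_0^T(\W-\U^*,\U^*)_{\mathbb{U}}\d t$.

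Finally I would expand $\mathscr{J}(\z,\W)-\mathscr{J}(\u^*,\U^*)$ directly from \eqref{cost4} and complete the square in each quadratic term, as in \eqref{3.25}; here the control contribution $\frac{1}{2}\|\W\|_{\H}^2-\frac{1}{2}\|\U^*\|_{\H}^2$ splits as $(\U^*,\U)_{\H}+\frac{1}{2}\|\U\|_{\H}^2$. Substituting the identity derived in the previous step cancels every first-variation (linear) contribution, including the cancellation of the boundary term $(\U,\U^*)_{\H}$ against the linear part of the control cost, and leaves $\mathscr{J}(\z,\W)-\mathscr{J}(\u^*,\U^*)$ equal precisely to the left-hand side of \eqref{6.7} evaluated at $(\u,\U)=(\z-\u^*,\W-\U^*)$. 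Since \eqref{6.7} is assumed to hold for every element of $\mathcal{M}_{\u^*,\U^*}$, this difference is nonnegative for all feasible $(\z,\W)\in\mathcal{A}_{\mathrm{ad}}$, which is exactly the optimality of $(\u^*,\U^*)$ for \eqref{IOCP}.
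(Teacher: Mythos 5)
Your proposal is correct and is essentially the proof the paper intends: the paper proves Theorem \ref{thm6.4} by remarking that it follows "in a similar way" as Theorem \ref{sufficient}, and your argument is exactly that adaptation, with the single genuine modification correctly identified — the nonzero initial value $\u(0)=\W-\U^*$ produces the boundary term $-(\U,\p(0))=(\U,\U^*)_{\H}$ via \eqref{in}, which replaces the distributed term $\int_0^T(\W(t)-\U^*(t),\U^*(t))_{\mathbb{U}}\d t$ and cancels against the linear part of the initial-data cost $\frac{1}{2}\|\W\|_{\H}^2-\frac{1}{2}\|\U^*\|_{\H}^2$, leaving precisely the left-hand side of \eqref{6.7}.
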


 \medskip\noindent
{\bf Acknowledgements:} M. T. Mohan would  like to thank the Department of Science and Technology (DST), India for Innovation in Science Pursuit for Inspired Research (INSPIRE) Faculty Award (IFA17-MA110).  The author sincerely would like to thank the reviewer for his/her valuable comments and suggestions.

\end{document}